\newtheorem{theorem}{Theorem}[section]
\newtheorem{proposition}[theorem]{Proposition}
\newtheorem{lemma}[theorem]{Lemma}
\newtheorem{corollary}[theorem]{Corollary}
\theoremstyle{definition}
\theoremstyle{remark}
\newtheorem{remark}[theorem]{Remark}
\newtheorem{remarks}[theorem]{Remarks}
\numberwithin{equation}{section}
\renewcommand{\epsilon}{\varepsilon}
\newcommand{\eps}{\varepsilon}
\newcommand{\N}{\mathbb{N}}
\newcommand*\diff{\mathop{}\!\mathrm{d}} 
\newcommand{\R}{\mathbb{R}}
\newcommand{\ieps}{{i,\eps}}
\newcommand{\jeps}{{j,\eps}}
\newcommand{\keps}{{k,\eps}}
\newcommand{\oeps}{{1,\eps}}
\newcommand{\sph}{{\mathbb S^{N-1}}}
\newcommand{\ds}{\displaystyle}
\begin{document}

\title[Multibubble blow-up analysis in three dimensions]{Multibubble blow-up analysis for the Brezis-Nirenberg problem in three dimensions}

\author{Tobias K\"onig}
\address[Tobias K\"onig]{Institut für Mathematik, 
Goethe-Universität Frankfurt, 
Robert-Mayer-Str. 10, 60325 Frankfurt am Main, Germany}
\email{koenig@mathematik.uni-frankfurt.de}

\author{Paul Laurain}
\address[Paul Laurain]{Departement de Mathématiques, Université Gustave Eiffel, France}
\email{paul.laurain@univ-eiffel.fr}

\thanks{\copyright\, 2025 by the authors. This paper may be reproduced, in its entirety, for non-commercial purposes.\\
Partial support through ANR BLADE-JC ANR-18-CE40-002 is acknowledged. The authors are grateful to Shuibo Huang for valuable comments on a previous version of this manuscript, which led to its improvement.}

\begin{abstract}
For a smooth bounded domain $\Omega \subset \R^3$ and smooth functions $a$ and $V$, we consider the asymptotic behavior of a sequence of positive solutions $u_\epsilon$ to $-\Delta u_\epsilon + (a+\epsilon V) u_\epsilon = u_\epsilon^5$ on $\Omega$ with zero Dirichlet boundary conditions, which blow up as $\epsilon \to 0$. We derive the sharp blow-up rate and characterize the location of concentration points in the general case of multiple blow-up,  thereby obtaining a complete picture of blow-up phenomena in the framework of the Brezis-Peletier conjecture in dimension $N=3$.

\end{abstract}

\maketitle

\section{Introduction}
\label{section introduction}

For an open bounded set $\Omega \subset \R^3$, let us consider a sequence of solutions $(u_\eps)$ to the problem
\begin{align}
-\Delta u_\eps + (a + \eps V) u_\eps &=  u_\eps^5
 \qquad \text{ on } \Omega, \nonumber \\
 u_\eps &> 0 \qquad \text{ on } \Omega, \label{brezis peletier additive}  \\
 u_\eps &= 0 \qquad \text{ on } \partial \Omega. \nonumber
\end{align}
We will assume throughout the paper that $a \in C(\overline{\Omega}) \cap C^{1, \sigma}_\text{loc}(\Omega)$ for some $\sigma \in (0,1)$ and $V \in C(\overline{\Omega}) \cap C^1(\Omega)$, but it is perfectly meaningful to think of $a$ and $V$ being constants. Moreover, we always assume that $-\Delta + a$ is coercive, which is in fact a necessary condition in this context, see Appendix C of \cite{KL2}, and that the boundary of $\Omega$ is $C^2$.

The study of this equation has been initiated in the seminal work \cite{Brezis1983} by Brezis and Nirenberg. The understanding of the behavior of solutions of this equation is pivotal in the Yamabe problem, see for instance \cite{DH05} and reference therein. Subsequently, Brezis and Peletier \cite{Brezis1989} initiated the study of \eqref{brezis peletier additive} in the case where there is at least one \emph{blow-up point} $x_0 \in \overline{\Omega}$, i.e. there is a sequence $x_\eps \to x_0$ such that $u_\eps (x_\eps) \to \infty$ as $\eps \to 0$. In \cite{Brezis1989} the authors conjecture an asymptotic expression for $\|u_\eps\|_\infty$ in the case where $(u_\eps)$ has precisely one blow-up point.

To discuss this in more depth, let us introduce the object that largely governs the asymptotic behavior of $(u_\eps)$, namely the Green's function $G_a: \Omega \times \Omega \to \R$. This is the unique function satisfying, for each fixed $y\in\Omega$,
\begin{equation} \label{Ga-pde}
\left\{
\begin{array}{l@{\quad}l}
-\Delta_x\, G_a(x,y) + a(x)\, G_a(x,y) =  \delta_y & \quad \text{in} \ \ \Omega\,, \\
G_a(\cdot,y) = 0  & \quad \text{on} \ \ \partial\Omega \,.
\end{array}
\right.  
\end{equation}
Note that $G_a(x,y) > 0$ for every $x,y \in \Omega$ as a consequence of coercivity. 
The regular part $H_a$ of $G_a$ is defined by 
\begin{equation} \label{ha-def}
H_a(x,y) := \frac{1}{4\pi |x-y|} - G_a(x,y)\, .
\end{equation}
It is well-known that for each $y\in\Omega$ the function $H_a(\cdot,y)$, which is originally defined in $\Omega\setminus\{y\}$, extends to a continuous function in $\Omega$. Thus we may define the \emph{Robin function}
$$
\phi_a(y) := H_a(y,y) \,.
$$
It is proved in \cite{Brezis1989, Rey1989, Han1991} that single-blow-up sequences of solutions to \eqref{brezis peletier additive} must concentrate at critical points $x_0$ of $\phi_a$. 

For space dimension $N \geq 4$ and $a \equiv 0$, $V \equiv -1$, sequences of solutions with a single blow-up point $x_0$ exist as a consequence of the Brezis--Nirenberg existence result \cite{Brezis1983}. As observed conjecturally in \cite{Brezis1989} and confirmed rigorously in \cite{Rey1989, Han1991}, the blow-up behavior of such $u_\eps$ is governed by the value $\phi_0(x_0)$ in the sense that 
	\begin{equation}
	\label{lim-eps-u^2}
	 \lim_{\epsilon \to 0}\,  \epsilon \,  \|u_\eps\|^\frac{2(N-4)}{N-2}_\infty   = d_N  \phi_0(x_0), 
\end{equation}
where $d_N$ is a constant only depending on $N$ only. (Note that $\phi_0(x_0) > 0$ by the maximum principle for $H_0(x_0, \cdot)$.) 

However, the conjectures in \cite{Brezis1989} leave open what happens in the Brezis--Nirenberg-critical dimension $N=3$, even in the case of one blow-up point $x_0$. Indeed, by \cite{Druet2002} single blow-up sequences must satisfy $\phi_a(x_0) = 0$ in that case, so that the leading order of $\|u_\eps\|_\infty$ can no longer be captured by the right side of \eqref{lim-eps-u^2}. In particular, since $\phi_0 > 0$ we necessarily must have $a \not \equiv 0$ if blow-up occurs. 

Dealing with the case where $\phi_a(x_0) = 0$ in the context of blow-up asymptotics is a formidable problem. The reason for this is that the value $\phi_a(x_0)$ appears as the leading coefficient of a certain energy expansion related to $u_\eps$, see e.g. \cite[eq. (23)]{Rey1989}, \cite[eq. (3.28)]{Frank2021b}. As long as $\phi_a(x_0) > 0$, this term determines the asymptotic behavior of $\|u_\eps\|_\infty$ as in \eqref{lim-eps-u^2}. Now if $\phi_a(x_0) = 0$, it is the next term in this expansion that becomes relevant for the asymptotics of $\|u_\eps\|_\infty$. To extract this term, the expansion needs in turn to be computed at a higher precision, which is a considerable analytic challenge.

Indeed, even for a single blow-up point $x_0$, the asymptotics replacing \eqref{lim-eps-u^2} in case $N = 3$ (and hence $\phi_a(x_0)=0$) have been derived only recently in \cite{Frank2021b} under a non-degeneracy assumption on $\phi_a$, see also \cite{Frank2021, Frank2019b} for the special case of least-energy solutions. They read
\begin{equation}
\label{lim eps u infty^2}
\lim_{\epsilon \to 0}\,  \epsilon \,  \|u_\eps\|^2_\infty   = \frac{\sqrt 3}{4} \frac{|a(x_0)|}{|\int_\Omega V(y) G_a(x_0,y)^2 \diff y|}. 
\end{equation} 
(In this statement it is assumed that $a(x_0) < 0$. If at the same time $\int_\Omega V(y) G_a(x,y)^2 \diff y = 0$, then \eqref{lim eps u infty^2} continues to hold with right side equal to $+ \infty$.)

\section{Main results}

In this paper we achieve a complete analysis of blowing-up solutions $u_\eps$ to \eqref{brezis peletier additive} in the spirit of Brezis and Peletier, in the general case where the sequence of solutions $u_\eps$ to \eqref{brezis peletier additive} may present multiple (a priori even infinitely many) blow-up points.

In particular, we are able to describe precisely the $L^\infty$ asymptotics near each concentration point, generalizing \eqref{lim eps u infty^2}. The appropriate expression, see  \eqref{mu asymptotics thm} below, involves an interaction between the blow-up points through new quantities and cannot be guessed easily from \eqref{lim eps u infty^2}.

To state our result precisely, we introduce some more notation. 

For any number $n \in \N$ of concentration points, let 
\[ \Omega_\ast^n := \{ \bm{x}= (x_1,...,x_n)\in \Omega^n \, : \, x_i \neq x_j \text{  for all  } i \neq j \}. \]
 For $\bm{x} \in \Omega_\ast^n$ we denote $M_a(\bm{x}) \in \R^{n \times n} = (m_{ij})_{i,j=1}^n$ the matrix with entries 
\begin{equation}
\label{m ij definition}
m_{ij}(\bm{x}) := 
\begin{cases}
\phi_a(x_i) & \text{ for } i = j, \\
- G_a(x_i, x_j) & \text{ for } i \neq j. 
\end{cases}
\end{equation}
Its lowest eigenvalue $\rho_a(\bm{x})$ is simple and the corresponding eigenvector can be chosen to have strictly positive components, see Lemma \ref{lemma perron frobenius}. We denote by $\bm{\Lambda}(\bm{x}) \in \R^n$ the unique vector such that 
\[ M_a(\bm{x}) \cdot \bm{\Lambda}(\bm{x}) = \rho_a(\bm{x})\bm{\Lambda}(\bm{x}) , \qquad (\bm{\Lambda}(\bm{x}) )_1 = 1. \]

Moreover, we define the Aubin--Talenti type bubble function 
\[ B(x):= \left(1 + \frac{|x|^2}{3} \right)^{-1/2} \]
and, for every $\mu > 0$ and $x_0 \in \R^3$ its rescaled and translated versions 
\[ B_{\mu, x_0}(x) = \mu^{-1/2} B\left( \frac{x - x_0}{\mu} \right) = \frac{\mu^{1/2}}{(\mu^2 + \frac{|x-x_0|^2}{3})^{1/2}}. \]
Notice that the normalizations are chosen here so that $-\Delta B_{\mu, x_0} = B_{\mu,x_0}^5$ on $\R^3$, for every $\mu > 0$ and $x_0 \in \R^3$.

The $B_{\mu, x_0}(x)$ are easily found to represent the leading order profile of $u_\eps$ around each of its concentration points, see Proposition \ref{proposition preliminaries multi} below. However, as explained above, a higher precision is needed for our purposes. Thus we shall need to introduce the following explicit correction function. 
For $j,k \in \{1,2,3\}$, we consider functions $W_{jk}$ which satisfy
\begin{equation}
\label{w equation}
\begin{cases}
-\Delta W_{jk} - 5 W_{jk} B^4 = 0 , \quad W_{jk}(x) = x_j x_k + o(|x|^2) \quad \text{ as } x \to 0 & \text{ if } j \neq k, \\
-\Delta W_{jj} - 5 W_{jj} B^4 = -B , \quad W_{jj}(x) = \frac{1}{2} x_j^2 + o(|x|^2) \quad \text{ as } x \to 0 & \text{ if } j = k.
\end{cases}
\end{equation} 
We construct these functions in Lemma \ref{lemma Wjk} below. 

Here is our main result. 

\begin{theorem}
\label{theorem multibubble}
Let $(u_\eps)$ be a sequence of solutions to \eqref{brezis peletier additive} with $\|u_\eps\|_\infty \to \infty$. Then there exists $n \in \N$ and $n$ sequences of points $x_\oeps,...,x_{n,\eps} \in \Omega$ such that $\mu_\ieps:= u_\eps(x_\ieps)^{-2} \to 0$ as $\eps \to 0$ and $\nabla u_\eps (x_\ieps) = 0$ for every $\eps > 0$.  

Moreover, the following holds. 
\begin{enumerate}[(i)]
\item \label{item conc points}  \textbf{Properties of  concentration points: } There is $\bm{x}_0 := (x_{1,0},...,x_{n,0}) \in \Omega^n_\ast$ such that up to a subsequence, $(x_\oeps,...,x_{n, \eps}) \to  \bm{x}_0$. Moreover, $\rho_a(\bm{x}_0) = \nabla_{\bm{x}} \rho_a(\bm{x}_0) = 0$. The matrix $M_a(\bm{x}_0)$ is semi-positive definite with simple lowest eigenvalue $\rho_a(\bm{x}_0)$. The associated eigenvector is $\bm{\Lambda}(\bm{x}_0) = (\Lambda_{1,0},...,\Lambda_{n,0})$ with $\Lambda_{i,0} = \lim_{\eps \to 0} \frac{\mu_\ieps^{1/2}}{\mu_\oeps^{1/2}} \in (0,\infty)$ for every $i$. 
\end{enumerate}
 
\begin{enumerate}[(i)]
 \setcounter{enumi}{1}
\item \label{item global} \textbf{Global asymptotics:} $\mu_{1,\eps}^{-1/2} u_{\eps}(x) \to 4 \pi \sqrt 3 \sum_i \Lambda_{i,0} G_a(x_i, x) =: \mathcal G(x)$ uniformly away from $\{x_{1,0},...,x_{n,0}\}$.

\item \label{item local} \textbf{Refined local asymptotics:} Let $B_\ieps := B_{\mu_\ieps, x_\ieps}$ and 
\[ W_\ieps(x):=  \sum_{j,k} \left(\partial_{jk} u_\eps(x_\ieps) - \partial_{jk} B_\ieps(x_\ieps) \right) W_{jk}\left(\frac{x - x_\ieps}{\mu_\ieps} \right). 
\]

 Then, for $\delta > 0$ small enough and every $0 < \nu < 1$, 
\begin{equation}
\label{u asymptotics thm}
|u_\eps - B_\ieps - \mu_\ieps^2 W_\ieps| \lesssim  \mu_\eps^{\frac 12 - \nu} |x-x_\ieps|^{2 + \nu} \quad \text{ on } B(x_\ieps, \delta). 
\end{equation} 
\item \label{item blowup} \textbf{Blow-up rate:} Assume either \textbf{(a)} that $\rho_a$ is $C^2$ in $x_0$ with $D^2_{\bm{x}} \rho_a(\bm{x}_0) \geq c$ for some $c >0$, in the sense of quadratic forms, or \textbf{(b)} that $\rho_a$ is real-analytic in $\bm{x}_0$. Let $\mathcal G$ be as in \eqref{item global}. Then 
\begin{equation}
\label{mu asymptotics thm}
\lim_{\eps \to 0} \eps u_\eps(x_\ieps)^2  = 12 \pi^2 \sqrt 3 \Lambda_{i,0}^{-2} \frac{\sum_{j=1}^n a(x_{j,0}) \Lambda_{j,0}^4}{\int_\Omega V \mathcal G^2 \diff x},
\end{equation}
provided that both of the quantities $\sum_j a(x_{j,0}) \Lambda_{j,0}^4$ and $\int_\Omega V \mathcal G^2 \diff x$ are non-zero. If one of them equals zero, but not the other one, \eqref{mu asymptotics thm} remains true, with the right side  being equal to $0$, respectively $+\infty$.  
\end{enumerate}   
\end{theorem}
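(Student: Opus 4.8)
The plan is to follow the by-now classical strategy for this kind of blow-up analysis — first a rough bubble decomposition, then a refined one, then the extraction of the sharp asymptotics from a Pohozaev-type identity — but carried out with $n$ interacting bubbles and to the higher precision dictated by the fact that the Robin-type quantity $\rho_a$ vanishes to second order in dimension $3$.

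\textbf{Step 1: Rough decomposition and the concentration set.} Starting from Proposition \ref{proposition preliminaries multi}, I would first extract the finitely many concentration points $x_{i,\eps}$, the scales $\mu_\ieps = u_\eps(x_\ieps)^{-2} \to 0$, and the fact that $\nabla u_\eps(x_\ieps) = 0$, together with the leading-order statement that $u_\eps$ is close to $\sum_i B_\ieps$ in a suitable energy norm. That the number $n$ is finite, that the $x_\ieps$ stay a fixed distance apart (hence $\bm x_\eps \to \bm x_0 \in \Omega_\ast^n$ after passing to a subsequence) and that each $x_{i,0}$ stays away from $\partial\Omega$ would be deduced from the coercivity of $-\Delta + a$ and a standard argument ruling out bubble clustering and boundary concentration in the subcritical Brezis--Nirenberg regime. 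At this stage the ratios $\mu_\ieps^{1/2}/\mu_\oeps^{1/2}$ are shown to be bounded above and below, and after a subsequence converge to positive numbers $\Lambda_{i,0}$ with $\Lambda_{1,0}=1$.

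\textbf{Step 2: Global asymptotics and the matrix $M_a$.} Away from the concentration points $u_\eps$ solves (essentially) $-\Delta u_\eps + a u_\eps = 0$ plus a source concentrated near the $x_\ieps$; testing with $G_a(\cdot, x)$ and using $\int B_\ieps^5 \to (\text{const})\mu_\ieps^{1/2}$ gives item \eqref{item global}, i.e. $\mu_\oeps^{-1/2} u_\eps \to \mathcal G = 4\pi\sqrt3 \sum_i \Lambda_{i,0} G_a(x_{i,0}, \cdot)$. Matching this with the local expansion of $u_\eps$ near $x_\ieps$ — where $u_\eps \approx B_\ieps$ and $B_\ieps(x) \approx \sqrt 3\, \mu_\ieps^{1/2}/|x-x_\ieps| - (\sqrt3/3)\mu_\ieps^{1/2}/\mu_\ieps \cdot(\dots)$ near its ``tail'' — forces the leading balance
\[
\sum_j m_{ij}(\bm x_0)\, \Lambda_{j,0} = o(1),
\]
so that $\bm\Lambda(\bm x_0)$ is an eigenvector of $M_a(\bm x_0)$ with eigenvalue $\to 0$; positivity of the components together with Lemma \ref{lemma perron frobenius} identifies this as the simple lowest eigenvalue $\rho_a(\bm x_0)$, and the sign of the next-order term (which must be $\geq 0$ for solutions to exist) gives $M_a(\bm x_0) \geq 0$, i.e. $\rho_a(\bm x_0) = 0$.

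\textbf{Step 3: Refined local profile.} This is the technical heart and the step I expect to be the main obstacle. I would write $u_\eps = B_\ieps + \mu_\ieps^2 W_\ieps + r_\eps$ on a ball $B(x_\ieps,\delta)$, where $W_\ieps = (a+\eps V)(x_\ieps) W_{\mu_\ieps, x_\ieps}$ is the explicit first correction solving the linearized equation \eqref{w equation} against the ``potential'' term $-B$; then one must control $r_\eps$. The linearized operator $-\Delta - 5B_\ieps^4$ has a kernel (the $B_\ieps$-dilation/translation modes), so one works orthogonally to it, uses the non-degeneracy of the bubble to invert, and bootstraps. The weighted pointwise bound \eqref{u asymptotics thm}, $|r_\eps| \lesssim \mu_\eps^{1/2-\nu}|x-x_\ieps|^{2+\nu}$, would be obtained by a fixed-point/contraction argument in a weighted $C^0$ (or $C^{2,\alpha}$) space adapted to the decay $|x-x_\ieps|^{2+\nu}$, carefully absorbing (a) the interaction of $B_\ieps$ with the other bubbles $B_\jeps$, $j\neq i$, which through the global limit contributes precisely the constant $\mathcal G(x_{i,0})$ to the relevant matching, and (b) the error from freezing $a+\eps V$ at $x_\ieps$, which is $C^{1,\sigma}_\loc$ — hence the appearance of $\sigma$ and the restriction $\nu<1$. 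The orthogonality conditions imposed in choosing $x_\ieps$ (where $\nabla u_\eps = 0$) and $\mu_\ieps$ are what make the cokernel vanish.

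\textbf{Step 4: Pohozaev identity and the blow-up rate.} With the refined expansion in hand, I would apply a Pohozaev-type identity on each ball $B(x_\ieps, \delta)$ — multiply \eqref{brezis peletier additive} by $x\cdot\nabla u_\eps + \tfrac12 u_\eps$ and integrate. The $u_\eps^5$ and $-\Delta u_\eps$ terms combine into a boundary integral that, using item \eqref{item global}, evaluates to a multiple of $\mu_\oeps\, \partial_{x_i}\!\big[\sum_j m_{ij}\Lambda_{j,0}\big]$ type expression at order beyond leading; the $(a+\eps V)u_\eps^2$ term produces, after inserting $u_\eps \approx B_\ieps$ and using $\int_{\R^3} B^2 |x|^{-2}$-type integrals (these are the divergent integrals whose regularization, via $W$, yields the $\log$-free finite constant), a term $\sim \mu_\ieps (a(x_{i,0}) + \eps \cdot(\dots)) \cdot(\text{const})$ plus the genuinely $\eps$-order term $\eps \int V \mathcal G^2$. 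Summing the $n$ identities against $\Lambda_{i,0}^2$ (this weighting is exactly what makes the $\nabla\rho_a$-term drop out, since $\nabla_{\bm x}\rho_a(\bm x_0) = 0$ by criticality — established separately from the fact that $u_\eps$ is a genuine solution, not just an approximate one, analogously to the single-bubble case and using hypothesis (a) or (b) to guarantee $\rho_a$ genuinely vanishes to second order rather than faster) leaves the identity
\[
\eps \Big(\int_\Omega V \mathcal G^2\,\diff x\Big)(1 + o(1)) = \mu_\oeps\, c_0 \sum_j a(x_{j,0})\Lambda_{j,0}^4 (1+o(1))
\]
for an explicit positive constant $c_0$; solving for $\eps \mu_\oeps^{-1} = \eps u_\eps(x_\oeps)^2$ and then using $u_\eps(x_\ieps)^2 = \mu_\ieps^{-1} = \Lambda_{i,0}^{-2}\mu_\oeps^{-1}(1+o(1))$ gives \eqref{mu asymptotics thm}, with the constant $12\pi^2\sqrt3$ emerging from the normalizations of $B$ and $G_a$. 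The degenerate cases (one of the two bracketed quantities zero) follow by the same identity read as an inequality.

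\textbf{Main obstacle.} The crux is Step 3: producing the refined expansion uniformly in $\eps$ with the sharp weight, while simultaneously tracking the inter-bubble interactions to the order at which they feed into the Pohozaev balance. Everything downstream is bookkeeping with explicit (if delicate) integrals; the regularity hypotheses on $a$ and the analyticity/non-degeneracy alternatives in (iv) are precisely what is needed to push the linearized analysis and the vanishing of $\nabla_{\bm x}\rho_a$ through.
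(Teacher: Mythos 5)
Your outline reproduces the overall architecture of the paper's argument (rough bubble decomposition, refined local expansion, testing identities, linear algebra on $M_a(\bm{x}_\eps)$), but it diverges in the two places where the real work happens, and in the second of these there is a genuine gap. For the refined expansion (your Step 3, item \eqref{item local}) you propose a Lyapunov--Schmidt/fixed-point argument in weighted spaces; the paper instead evaluates the weighted quotient $|u_\eps-B_\ieps-\mu_\ieps^2W_\ieps|/|x-x_\ieps|^{2+\nu}$ at a near-maximal point, rescales there, and applies a Liouville-type classification (Proposition \ref{theorem non degeneracy intro}, Corollary \ref{corollary u = 0}) of solutions of the linearized equation with polynomial growth $|x|^\tau$, $\tau\in(2,3)$. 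Either route must confront the same obstruction, which your sketch does not mention: at growth rate $2+\nu$ the linearized operator admits nontrivial solutions $v_2^-(r)Y_2(\omega)$ with $v_2^-\sim r^2$ both at $0$ and at $\infty$, which are \emph{not} in the span of the dilation/translation modes and are not killed by $r_\eps(x_\ieps)=\nabla r_\eps(x_\ieps)=0$; they are excluded only because $\tau=2+\nu$ is non-integer, so a solution bounded by $|x|^{2+\nu}$ near the origin cannot contain an $r^2$ mode. The usual ``non-degeneracy of the bubble'' among decaying solutions is not enough here; this is precisely what Appendix \ref{A} supplies, and any weighted-space inversion would need the analogous indicial-root analysis.

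The more serious gap is your account of where hypotheses (a)/(b) enter. You say they ``guarantee $\rho_a$ genuinely vanishes to second order rather than faster''; this points in the wrong direction, since faster vanishing would be harmless. What is actually needed is the quantitative bound $\rho_a(\bm{x}_\eps)=o(\eps+\mu_\eps)$: without it, the component $\rho_a(\bm{x}_\eps)\bm{\sigma}_\eps$ of $M_a(\bm{x}_\eps)\cdot\bm{\lambda}_\eps$ contaminates the balance between $\eps\int_\Omega V\mathcal{G}^2$ and $\mu_\oeps\sum_j a(x_{j,0})\Lambda_{j,0}^4$ at exactly the order being extracted. The paper obtains this by combining the a priori bound $|\nabla\rho_a(\bm{x}_\eps)|\lesssim\eps+\mu_\eps^\nu+|\rho_a(\bm{x}_\eps)|$ (from the second testing identity) with a \L{}ojasiewicz-type inequality $|\rho_a(\bm{x}_\eps)|\lesssim|\nabla\rho_a(\bm{x}_\eps)|^\theta$ for some $\theta>1$, which is what (a) gives with $\theta=2$ and (b) gives with $\theta=k/(k-1)$ along the approach direction; the danger being excluded is $\rho_a$ vanishing too \emph{slowly} relative to its gradient. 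Relatedly, the weighted sum over $i$ does not cancel a ``$\nabla\rho_a$-term'' (the vanishing $\nabla\rho_a(\bm{x}_0)=0$ is a separate output of the second identity); it cancels $\langle\bm{\sigma}_\eps,M_a(\bm{x}_\eps)\cdot\bm{\delta}_\eps\rangle$ by the orthogonality $\bm{\sigma}_\eps\perp\bm{\delta}_\eps$. Your Step 4 would need to be reorganized around these two points to close.
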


We emphasize that contrary to previous result on the Brezis-Peletier conjecture no bound on the number of blow-up points of $u_\eps$ is assumed. In fact it was already known from the work of Li-Zhu \cite{Li1999} that in dimension $3$ the blow-up points must be isolated, see also \cite{Li1995,Li1996}. In fact, points (i) and (ii) of the theorem follow  directly from \cite{Druet2010} and have been reproved in \cite{KL2}, see also reference therein, but we chose to include them in the theorem for a complete statement.

This theorem is an exhaustive description of blow-up phenomena of equation \eqref{brezis peletier additive} in dimension $N = 3$. Its main points are items (iii) and (iv), namely the strong (superquadratic) pointwise bound \eqref{u asymptotics thm} of $u_\eps$ near each blow-up point, and the  explicit asymptotic expression for the blow-up rates $u_\eps(x_\ieps)$ in \eqref{mu asymptotics thm} derived from it.

Let us give several more remarks to put this result into context. 

\begin{remarks}
\begin{enumerate}[(a)]

\item The appearance of the matrix $M_a$ and its lowest eigenvalue $\rho_a$ in the asymptotic expansions relative to multiple blow-up is well-known, see e.g. \cite{Bahri1995, Musso2002, Musso2018, Cortazar2020, Malchiodi2021}. E.g. in \cite{Musso2018} solutions to \eqref{brezis peletier additive} blowing up in points $(x_1,...,x_n) = \bm x$  are constructed under the assumption $\rho_a(\bm{x}_0) = \nabla \rho_a(\bm{x}_0) = 0$, which is optimal as Theorem \ref{theorem multibubble} shows.

Our new contribution is, in this context, to deal with the vanishing $\rho_a(\bm x_0) = 0$ and to extract the next-order term determining the asymptotics \eqref{mu asymptotics thm}, compare the discussion leading to \eqref{lim eps u infty^2}. This is the main difficulty overcome in our paper.
To underline the novelty of \eqref{mu asymptotics thm}, one may remark that even for the solutions constructed in  \cite{Musso2018}, only the information $\mu_\ieps = \mathcal O(\eps)$ is obtained through the existence argument, which is less precise than \eqref{mu asymptotics thm}.

\item \label{remark nondeg conditions}
The conditions (a) and (b) in item \eqref{item blowup} can be thought of as non-degeneracy assumptions. This is clear for (a), which is the natural generalization of \cite[Assumption 1.1(d)]{Frank2021b} to the case of multiple blow-up points. The $C^2$-differentiability of $\rho_a$ is guaranteed under the slightly stronger assumption $a \in C^{0,1}(\overline{\Omega}) \cap C_\text{loc}^{2,\sigma}(\Omega)$ for some $\sigma \in (0,1)$, see Lemma \ref{lemma rho_a analytic}.  

Remarkably, in assumption (b), no positivity condition at all is needed, only higher regularity of $\rho_a$, more precisely real-analyticity. This observation is new even for the case of one blow-up point. We prove in Lemma \ref{lemma phi a is analytic} that $\rho_a$ is real-analytic if $a \equiv \text{const.}$. But in view of similar results, e.g. \cite{John1950, Khenissy2010, Franceschini2021}, it is reasonable to expect that $\phi_a$, and hence $\rho_a$, is real-analytic whenever $a$ is; see also the remarks after the proof of Lemma \ref{lemma phi a is analytic}. This is an open question to the best of our knowledge, and it would be very interesting to obtain an answer to it.

\item Since Theorem \ref{theorem multibubble} only makes a statement about space dimension $N=3$, a natural question is to determine the asymptotic behavior of a sequence of solutions to $-\Delta u_\eps + \eps V u_\eps = u_\eps^\frac{N+2}{N-2}$ when $N \geq 4$. This has been completed by the authors in the recent work \cite{KL2}. (We take $a \equiv 0$ here so that $\phi_a$ remains well-defined when $N \geq 4$. But even treating more general $a \not \equiv 0$ appears equally possible by using the appropriate asymptotic expansion of $G_a(x, \cdot)$, which contains additional singular terms.)

\item Another problem closely related to \eqref{brezis peletier additive} is (for $N \geq 3$)
\begin{equation}
\label{bp subcritical}
-\Delta u_\eps + a u_\eps = u_\eps^{\frac{N+2}{N-2}-\eps}, \qquad u_\eps > 0, \qquad u_\eps|_{\partial \Omega} = 0,
\end{equation}
whose single-blow-up asymptotics as $\eps \to 0+$ in the case $a \equiv 0$ have as well been determined by \cite{Brezis1989, Rey1989, Han1991}, see also \cite{Hebey2000}. 

The case of multiple concentration points in \eqref{bp subcritical}, still for $a \equiv 0$, has subsequently been studied in \cite{Rey1991, Bahri1995, Rey1999}. There, the authors derive an asymptotic formula for (essentially) $u_\eps(x_\ieps)$ similar to \eqref{item blowup} under the condition that  $\rho_0(\bm{x}_0) > 0$. In the spirit of our above discussion, this should be viewed as the analogue of the simpler case \eqref{lim-eps-u^2}. For single-blow-up in $N =3$, the analogue of the harder formula \eqref{lim eps u infty^2} has been proved in \cite[Theorem 1.3]{Frank2021b}. (A subtle, yet interesting difference between \eqref{brezis peletier additive} and \eqref{bp subcritical} is that when single blow-up happens (say in $x_0$), one automatically has $\phi_a(x_0) = 0$  in the former problem \cite[Theorem 1.5]{Frank2021b}, but not in the latter \cite[Theorem 2.(b)]{delPino2004}. Hence $\rho_0(\bm{x}_0) > 0$ may well be satisfied, even when $N = 3$.)

On the other hand, for multiple blow-up in \eqref{bp subcritical}, we are not aware of a formula analogous to \eqref{mu asymptotics thm} for $\rho_a(\bm{x}_0) = 0$, not even when $a \equiv 0$. We believe that our methods can yield such a formula, but we leave this question to future work. Such belief may be justified by the results in \cite{Frank2021b}, where $L^\infty$ single-blow-up asymptotics are obtained for both \eqref{brezis peletier additive} and \eqref{bp subcritical} when $\phi_a(x_0) = 0$ by arguments very similar to each other, which are however different from the ones employed here.
\end{enumerate}
\end{remarks}

A crucial tool which we use repeatedly in the asymptotic analysis leading to Theorem \ref{theorem multibubble} is the non-degeneracy of the bubble $B$ as a solution to the equation $-\Delta u = u^5$. The non-degeneracy property roughly says that the solutions of the linearized equation around $B$, i.e. $-\Delta v = 5 B^4 v$, with polynomial growth are either the one you may expect, that is to say the one you can construct from the family $B_{\mu,x_0}$, or some function equivalent to the same homogeneous polynomial both at $0$ and $+\infty$. In fact this non-degeneracy property does not depend on the fact that $N = 3$, we state it, see Proposition \ref{u linear combi theorem} in the appendix, for general dimension $N \geq 3$.  Proposition \ref{u linear combi theorem} substantially improves previous statements of the same kind with decreasing behavior at infinity, see for example in \cite[Appendix D]{Rey1990} and \cite[Lemma 2.4]{Chen1998}, the result is derived assuming that $\nabla v \in L^2(\R^N)$, respectively that $|v(x)| = o(1)$ as $|x| \to \infty$. However, a kind of analysis suitable for our purposes was already performed by Korevaar, Mazzeo, Pacard and Schoen, see Section 2.2 of \cite{KMPS} and references therein. In their setting they deal with singular decreasing solutions of the linearization of $-\Delta u = u^\frac{N+2}{N-2}$ about a singular solution, nevertheless the proof contains all ingredients to be applied in our setting. The full needed non-degeneracy statement and a sketch of its proof is postponed to Appendix \ref{A}.

\subsection{Structure of the paper. }


Section \ref{section asymp analysis additive} is devoted to the proof of part \eqref{item local} of Theorem \ref{theorem multibubble}. Our starting point is a qualitative result on blow-up sequences from \cite{Druet2010}, Proposition \ref{proposition preliminaries multi}, which we refine in two iteration steps. 

In Section \ref{section proof of thm}, the precise expansion of $u_\eps$ from Section \ref{section asymp analysis additive} is used in turn to derive two asymptotic Pohozaev-type identities involving $\rho_a(\bm{x}_\eps)$ and $\nabla \rho_a(\bm{x}_\eps)$, respectively. Together with some linear-algebraical arguments on the matrix $M_a(\bm{x}_\eps)$ and using either the non-degeneracy assumption $D^2 \rho_a(\bm{x}_0) \geq c$ or the analyticity of $\rho_a$, the combination of these identities yields the asymptotic expression of $u_\eps(x_\ieps)$ claimed in part \eqref{item blowup} of Theorem \ref{theorem multibubble}. 

In Appendix \ref{A}, as already mentioned above, we give some details concerning the non-degeneracy property of the limit equation $-\Delta u = u^\frac{N+2}{N-2}$ under polynomial growth conditions (for general $N = 3$).

Finally, a second appendix contains some explicit computations involving the functions $G_a$ and $W$. 

\subsection{Notation}

Let $f,g : X \to \R_+$ be nonnegative functions defined on some set $X$. We write $f(x) \lesssim g(x)$ if there is a constant $C > 0$ independent of $x$ such that $f(x) \leq C g(x)$ for all $m \in M$, and accordingly for $\gtrsim$. If $f \lesssim g$ and $g \lesssim f$, we write $f \sim g$. 

Let $f: X^n \to \R$ be a function of $n$ variables for $X \subset \R^3$. We write $\nabla_{x_i}$ and $\partial^{x_i}_k f$ to denote the gradient, respectively the $k$-th partial derivative, of $f$ with  the $i$-th variable. When $n = 2$, we also write $\nabla_{x_1} = \nabla_x$, $\partial^{x_i}_k = \partial_k^x$ and $\nabla_{x_2} = \nabla_y$, $\partial^{x_2}_k = \partial_k^y$.

\section{Asymptotic analysis of $u_\eps$}
\label{section asymp analysis additive}

The following proposition follow almost directly from \cite{Druet2010} and it has been reproved in this exact frame in \cite{KL2}, see proposition B.1. It is the starting point of our analysis. 

\begin{proposition}
\label{proposition preliminaries multi}
Let $(u_\eps)$ be a sequence of solutions to \eqref{brezis peletier additive}. Then, up to extracting a subsequence, there exists $n \in \N$ and points $x_{1,\eps},..., x_{n,\eps}$ such that the following holds. 
\begin{enumerate}[(i)]
\item $x_\ieps \to x_i \in \Omega$ for some $x_i \in \Omega$ with $x_i \neq x_j$ for $i \neq j$. 
\item $\mu_\ieps := u_\eps(x_\ieps)^{-2} \to 0$ as $\eps \to 0$ and $\nabla u_\eps(x_\ieps) = 0$ for every $i$. 
\item \label{item lambda i} $\lambda_{i,0} := \lim_{\eps \to 0} \lambda_\ieps := \lim_{\eps \to 0} \frac{\mu_\ieps^{1/2}}{\mu_\oeps^{1/2}}$ exists and lies in $(0, \infty)$ for every $i$. 
\item $\mu_{i,\eps}^{1/2} u_\eps(x_{i,\eps} + \mu_{i,\eps} x) \to B$ in $C^1_\text{loc}(\R^n)$. 

\item There is $C > 0$ such that $u_\eps \leq C \sum_i B_\ieps$ on $\Omega$.  
\end{enumerate}
\end{proposition}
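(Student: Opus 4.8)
The plan is to obtain this, following \cite[Proposition B.1]{KL2} and the blow-up analysis of \cite{Druet2010}, by running the standard bubbling machinery and then invoking the features specific to dimension three. First one establishes a bubble decomposition: along a subsequence, using the coercivity of $-\Delta+a$, the smallness $\eps V\to 0$ in $C(\overline\Omega)$, the isolated--simple blow-up analysis of \cite{Li1999} (which excludes bubble accumulation and towers), and a Pohozaev identity on small balls together with $\partial\Omega\in C^2$ to rule out boundary concentration, one obtains a weak limit $u_0\ge 0$ solving $-\Delta u_0+a u_0=u_0^5$, $u_0|_{\partial\Omega}=0$, a finite $n\ge 0$, sequences $\tilde x_{i,\eps}\to x_i\in\Omega$ (pairwise distinct) and $\tilde\mu_{i,\eps}\to 0$, and a decomposition $u_\eps=u_0+\sum_{i=1}^nB_{\tilde\mu_{i,\eps},\tilde x_{i,\eps}}+o(1)$ in $\dot H^1$ with mutually separated bubbles; since $\|u_\eps\|_\infty\to\infty$ we have $n\ge 1$, and we relabel so that $\tilde\mu_{1,\eps}$ is the largest scale. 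This already yields (i). Near each $x_i$ I would pick a local maximum point $x_{i,\eps}$ of $u_\eps$ with $|x_{i,\eps}-\tilde x_{i,\eps}|=o(\tilde\mu_{i,\eps})$ and set $\mu_{i,\eps}:=u_\eps(x_{i,\eps})^{-2}\sim\tilde\mu_{i,\eps}$, so that $\nabla u_\eps(x_{i,\eps})=0$ and $\mu_{i,\eps}\to 0$, i.e.\ (ii); then the rescaling $v_{i,\eps}(y):=\mu_{i,\eps}^{1/2}u_\eps(x_{i,\eps}+\mu_{i,\eps}y)$ solves $-\Delta v_{i,\eps}+\mu_{i,\eps}^2(a+\eps V)(x_{i,\eps}+\mu_{i,\eps}y)\,v_{i,\eps}=v_{i,\eps}^5$ with $v_{i,\eps}(0)=1$, $\nabla v_{i,\eps}(0)=0$ and finite energy, so elliptic estimates and the Liouville-type classification of positive solutions of $-\Delta v=v^5$ on $\R^3$ force $v_{i,\eps}\to B$ in $C^1_\loc(\R^3)$, which is (iv).

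The crux, and the step I expect to be the main obstacle, is the global pointwise bound (v), $u_\eps\le C\sum_{i=1}^nB_{i,\eps}$ on $\Omega$. The plan is to follow Druet's method: argue by contradiction, letting $y_\eps\in\Omega$ realize $\max_\Omega u_\eps/\big(\sum_iB_{i,\eps}\big)$ and assuming this maximum tends to $\infty$; a careful local analysis at $y_\eps$ --- Harnack inequalities on dyadic annuli, the observation that $\sum_iB_{i,\eps}$ is a supersolution of the relevant linearized problem up to controllable errors, and the coercivity of $-\Delta+a$ used to absorb the lower-order term $(a+\eps V)u_\eps$ --- should produce a new bubble overlapping one already present, contradicting the separation from the first step. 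This is where the hypotheses $a\in C^{1,\sigma}_\loc(\Omega)$, coercivity, and crucially $N=3$ all enter; it is the delicate technical heart taken over from \cite{Druet2010,KL2}. Once (v) is available, $u_0\equiv 0$ follows at once: on every compact $K\subset\Omega\setminus\{x_1,\dots,x_n\}$ one has $0\le u_\eps\le C\sum_iB_{i,\eps}\le C_K\sum_i\mu_{i,\eps}^{1/2}\to 0$, so $u_0$ --- continuous by elliptic regularity --- vanishes on the dense set $\Omega\setminus\{x_1,\dots,x_n\}$, hence everywhere.

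It remains to prove (iii). With (v) in hand and $\mu_{1,\eps}$ the largest scale, $\tilde u_\eps:=\mu_{1,\eps}^{-1/2}u_\eps$ is uniformly bounded on compact subsets of $\Omega\setminus\{x_1,\dots,x_n\}$ and satisfies $-\Delta\tilde u_\eps+(a+\eps V)\tilde u_\eps=\mu_{1,\eps}^2\tilde u_\eps^5\to 0$ there, so elliptic theory gives $\tilde u_\eps\to\sum_{i=1}^nc_iG_a(x_i,\cdot)$ for some $c_i\ge 0$; comparing the singularity at each $x_j$ with the bubble profile $\mu_{1,\eps}^{-1/2}B_{j,\eps}(x)\approx\sqrt3\,(\mu_{j,\eps}/\mu_{1,\eps})^{1/2}|x-x_j|^{-1}$ identifies $c_i=4\pi\sqrt3\,\lim_{\eps\to 0}(\mu_{i,\eps}/\mu_{1,\eps})^{1/2}$, so $\lambda_{i,0}:=\lim_{\eps\to 0}(\mu_{i,\eps}/\mu_{1,\eps})^{1/2}$ exists in $[0,\infty)$ with $\lambda_{1,0}=1$. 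That every $\lambda_{i,0}>0$ --- equivalently, that no bubble collapses infinitely faster than the slowest, a balancing phenomenon peculiar to dimension three --- I would extract from the system of $n$ Pohozaev identities centered at the $x_{i,\eps}$: at leading order these couple the scales $\mu_{i,\eps}^{1/2}$ through the matrix $M_a(\bm x_0)$, and together with the Perron--Frobenius structure of $M_a$ from Lemma \ref{lemma perron frobenius} this excludes any ratio $\mu_{i,\eps}/\mu_{1,\eps}\to 0$ and pins $(\lambda_{1,0},\dots,\lambda_{n,0})$ down as a strictly positive vector. This yields (iii) and completes the plan.
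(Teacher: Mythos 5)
The paper does not actually prove this proposition: it is imported wholesale from \cite{Druet2010}, as reproved in this exact setting in \cite[Proposition B.1]{KL2}, and your outline is a faithful reconstruction of the standard route taken there (bubble decomposition, recentering at critical points, Caffarelli--Gidas--Spruck classification for (iv), Druet's comparison argument for the global bound (v), and the Green's-representation balance for (iii)). Two caveats. First, your deduction of $u_0\equiv 0$ \emph{from} (v) is fine as stated, but the contradiction argument you sketch for (v) must itself dispose of the case $u_0\not\equiv 0$ (otherwise $\max u_\eps/\sum_i B_\ieps\to\infty$ trivially away from the concentration points), so in practice the vanishing of the weak limit in dimension $3$ is established inside, not after, that step. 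Second, invoking Lemma \ref{lemma perron frobenius} to rule out $\lambda_{i,0}=0$ is circular relative to the paper's logic, since that lemma is proved later from Proposition \ref{proposition expansions}, whose expansions presuppose the present proposition; what you actually need at this stage is only that if $\lambda_{i,0}=0$ for some $i$ while $\lambda_{1,0}=1$, the $i$-th leading-order balance reduces to $\sum_{j\neq i}G_a(x_i,x_j)\lambda_{j,0}=0$, which contradicts the strict positivity of $G_a$. With that adjustment the plan is the same as the cited proof; the genuinely hard step (v) remains, as you acknowledge, taken over from \cite{Druet2010,KL2}.
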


Here and in the following, all sums are over $1,...,n$ unless specified otherwise.

Since the $\mu_\ieps$ are all of comparable size by Proposition \ref{proposition preliminaries multi}, it will be convenient in the following to state error estimates in terms of $\mu_\eps := \max_i \mu_\ieps \lesssim \min_i \mu_\ieps$.

Proposition \ref{proposition preliminaries multi} says that near $x_\ieps$, the function $u_\eps$ is well approximated by $B_\ieps$. Our goal in this section is to extract the next term in the asymptotic development of $u_\eps$ near $x_\ieps$. This term will turn out to involve the function 
\begin{equation}
\label{W ieps definition}
W_\ieps (x) = \sum_{j,k} \left(\partial_{jk} u_\eps(x_\ieps) - \partial_{jk} B_\ieps(x_\ieps) \right) W_{jk}\left(\frac{x - x_\ieps}{\mu_\ieps} \right), 
\end{equation}
with $W_{jk}$ as in \eqref{w equation}.

We also define the small ball 
\[ \mathsf{b}_\ieps := B(x_\ieps, \delta_0) \]
around $x_\ieps$, with some number $\delta_0 >0$ independent of $\eps$ and chosen so small that $\delta_0 < \frac 12  \min_{i\neq j} |x_\ieps - x_\jeps|$ for all $\eps > 0$ small enough.

Here is the main result of this section. 

\begin{theorem}
\label{theorem asmyptotic expansion additive}
Let $u_\eps$ be a sequence of solutions to \eqref{brezis peletier additive} and adopt the notations from Proposition \ref{proposition preliminaries multi}. For every $i = 1,...,n$, denote
\[ r_\ieps := u_\ieps - B_\ieps, \qquad  q_\ieps := r_\ieps - \mu_\ieps^2 W_\ieps =  u_\eps - B_\ieps - \mu_\ieps^2 W_\ieps. \]
Then, for every $x \in \mathsf{b}_\ieps$, we have the bounds
\begin{align}
\label{r eps bound theorem}
|r_\ieps(x)| &\lesssim  \mu_\eps^{\frac 12- \vartheta} |x-x_\ieps|^{1+\vartheta}, \qquad \text{ for every } 0 < \vartheta < 1, \\
\label{q eps bound theorem}
|q_\ieps(x)| &\lesssim  \mu_\eps^{\frac 12 - \nu} |x-x_\ieps|^{2 + \nu}, \qquad \text{ for every } 0 < \nu < 1.
\end{align} 
\end{theorem}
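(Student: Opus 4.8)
\textbf{Proof proposal for Theorem \ref{theorem asmyptotic expansion additive}.}

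The plan is to obtain the two bounds by a bootstrap argument, refining the leading-order description of Proposition \ref{proposition preliminaries multi} in two successive steps. First I would set up the equation satisfied by the remainder. Since $-\Delta B_\ieps = B_\ieps^5$ on $\R^3$ and $-\Delta u_\eps = u_\eps^5 - (a+\eps V)u_\eps$ on $\Omega$, the function $r_\ieps = u_\eps - B_\ieps$ satisfies, on $\mathsf{b}_\ieps$,
\begin{equation*}
-\Delta r_\ieps - 5 B_\ieps^4 r_\ieps = \bigl( (B_\ieps + r_\ieps)^5 - B_\ieps^5 - 5 B_\ieps^4 r_\ieps \bigr) - (a+\eps V) u_\eps =: N_\ieps - (a+\eps V) u_\eps,
\end{equation*}
where $N_\ieps$ is quadratic in $r_\ieps$. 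The point is that after rescaling $x = x_\ieps + \mu_\ieps z$ and setting $\tilde r(z) = \mu_\ieps^{-1/2} r_\ieps(x_\ieps + \mu_\ieps z)$, the equation becomes $-\Delta_z \tilde r - 5 B^4 \tilde r = \mu_\ieps^2 [\text{lower order}] - \mu_\ieps^2 (a+\eps V)(x_\ieps+\mu_\ieps z)(B + \tilde r)(z) + \dots$, so the source term is of size $\mu_\eps^2$ times controlled quantities, and the operator on the left is exactly the linearized operator around $B$ whose kernel is understood via the non-degeneracy Proposition \ref{u linear combi theorem}.

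The key steps, in order: (1) Use Proposition \ref{proposition preliminaries multi}(iv)–(v) together with the outer estimate (item (v)) to control $u_\eps$ and $r_\ieps$ on the whole of $\mathsf{b}_\ieps$, and split $\mathsf{b}_\ieps$ into the ``bubble region'' $|x - x_\ieps| \lesssim R \mu_\ieps$ and the ``neck/outer region'' $|x - x_\ieps| \gtrsim R\mu_\ieps$. (2) On the bubble region, convert the PDE for $\tilde r$ into an integral equation using the Green's function of $-\Delta - 5B^4$ on $\R^3$ (available thanks to non-degeneracy, after projecting off the kernel directions $\partial_\mu B, \partial_{x_j} B$, which are killed because $\nabla u_\eps(x_\ieps)=0$ fixes the center and $\mu_\ieps = u_\eps(x_\ieps)^{-2}$ fixes the scale), and run a fixed-point/iteration to deduce a first crude bound $|r_\ieps| \lesssim \mu_\eps^{1/2 - \vartheta}|x - x_\ieps|^{1+\vartheta}$ — the exponent $1+\vartheta$ rather than $1$ comes from the borderline growth of the relevant homogeneous solution of the linearized equation (this is precisely the improvement in Proposition \ref{u linear combi theorem}), and the $-\vartheta$ loss absorbs logarithms. (3) On the outer region, use the maximum principle / Green's representation for $-\Delta + a$ on $\Omega$ with the explicit decay of $B_\ieps$ to patch the two regions, giving \eqref{r eps bound theorem}. (4) For the second, sharper bound, write $q_\ieps = r_\ieps - \mu_\ieps^2 W_\ieps$ and compute its equation: by the defining equation \eqref{w equation} for $W$, namely $-\Delta W - 5W B^4 = -B$, the function $\mu_\ieps^2 W_\ieps$ is built to cancel exactly the leading $-\mu_\ieps^2 (a+\eps V)(x_\ieps) B_\ieps$ part of the source, so $q_\ieps$ satisfies $-\Delta q_\ieps - 5 B_\ieps^4 q_\ieps = [\text{error terms of size } \mu_\eps^2 \cdot \mu_\ieps^{1/2-\nu}|x-x_\ieps|^{\dots}] + [\text{quadratic in } r_\ieps] + [\text{variation of }(a+\eps V)]$, each now genuinely smaller. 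Repeating the integral-equation/iteration argument of step (2)–(3) for $q_\ieps$, and using that $q_\ieps$ vanishes to higher order at $x_\ieps$ (since $W(0)=\nabla W(0)=0$, $r_\ieps(x_\ieps)=0$, $\nabla r_\ieps(x_\ieps)=0$), upgrades the homogeneity from $1+\vartheta$ to $2+\nu$, yielding \eqref{q eps bound theorem}.

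The main obstacle I expect is step (2): setting up the inversion of the linearized operator $-\Delta - 5B^4$ in weighted spaces adapted to polynomial growth, and correctly handling the kernel. One must verify that the orthogonality/normalization conditions coming from $\nabla u_\eps(x_\ieps) = 0$ and $\mu_\ieps = u_\eps(x_\ieps)^{-2}$ really do project the remainder off the three-dimensional kernel spanned by the dilation and translation modes, so that the inverse exists with the right mapping properties, and that the growth exponent one gets is sharp (the appearance of $1+\vartheta$, $2+\nu$ rather than integer exponents, which is dictated by the homogeneous solutions identified in Proposition \ref{u linear combi theorem}). A secondary technical point is the gluing across the neck region for multiple bubbles: one needs that the contribution of the other bubbles $B_\jeps$, $j \neq i$, to the equation on $\mathsf{b}_\ieps$ is harmless, which follows from $\delta_0 < \frac12 \min_{i\neq j}|x_\ieps - x_\jeps|$ and the explicit decay of $B_\jeps$, but must be tracked through both iterations.
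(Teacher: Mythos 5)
Your overall architecture matches the paper's: a two-step bootstrap, first the bound on $r_\ieps$, then the improved bound on $q_\ieps = r_\ieps - \mu_\ieps^2 W_\ieps$ after subtracting the corrector built from $W$, which cancels the leading source term $-(a+\eps V)(x_\ieps)B_\ieps$; and you correctly identify that the non-integer exponents $1+\vartheta$, $2+\nu$ are dictated by the homogeneous solutions classified in Proposition \ref{theorem non degeneracy intro}. But the technical engine you propose --- inverting $-\Delta - 5B^4$ in weighted spaces via an integral equation and a fixed-point iteration --- is not what the paper does, and as written it leaves the central step unresolved. The paper instead runs a compactness--contradiction argument in the spirit of Druet: for the weighted quotient $R_\ieps(x) = r_\ieps(x)/|x-x_\ieps|^{1+\vartheta}$ one picks a near-maximum point $z_\ieps$, sets $d_\ieps = |z_\ieps - x_\ieps|$, rescales by $d_\ieps$ and normalizes by $r_\ieps(z_\ieps)$; if $\|R_\ieps\|_{L^\infty(\mathsf{b}_\ieps)} \gg \mu_\eps^{1/2-\vartheta}$, the rescaled remainders converge to a nontrivial solution of a limit equation, which the growth bound together with the vanishing at the origin forces to be identically zero (Liouville when $d_\ieps \not\sim \mu_\eps$, Corollary \ref{corollary u = 0} when $d_\ieps \sim \mu_\eps$), contradicting the normalization at a point of the unit sphere. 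This converts the qualitative uniqueness statement of Corollary \ref{corollary u = 0} directly into the quantitative a priori bound, with no linear solvability theory and no region-splitting/gluing needed. The same scheme is then repeated verbatim for $Q_\ieps(x) = q_\ieps(x)/|x-x_\ieps|^{2+\nu}$.

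The gap in your route is precisely the point you flag as ``the main obstacle''. The conditions $\nabla u_\eps(x_\ieps)=0$ and $u_\eps(x_\ieps)=\mu_\ieps^{-1/2}$ give only the pointwise vanishing $r_\ieps(x_\ieps)=0$, $\nabla r_\ieps(x_\ieps)=0$, not orthogonality to the kernel of $-\Delta - 5B^4$ in any inner product; and in the growth class $|x|^{2+\nu}$ the obstruction is not only the classical kernel $\{w_0,\dots,w_3\}$ but also the mode $v_2^-(r)Y_2(\omega)\sim r^2$, which is excluded only by combining the growth at infinity with the vanishing rate at the origin. Converting these pointwise conditions into a quantitative bound on the corresponding components of a putative resolvent is exactly the hard part, and a fixed-point iteration is in any case the wrong framework here, since $u_\eps$ is a given solution rather than one being constructed: what is needed is an a priori estimate. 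If you replace your steps (2)--(3) by the contradiction/blow-up argument at the maximum of the weighted quotient (distinguishing the regimes $d_\ieps \ll \mu_\eps$, $\mu_\eps \ll d_\ieps \ll 1$, $d_\ieps \sim 1$, and $d_\ieps \sim \mu_\eps$), the rest of your outline --- in particular the role of $W_\ieps$ and the upgrade from $1+\vartheta$ to $2+\nu$ --- goes through as in the paper.
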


Our proof of Theorem \ref{theorem asmyptotic expansion additive} is in the spirit of \cite{Druet2004} and related works. It consists of two iterative steps carried out in Subsections \ref{subsection first bound} and \ref{subsection refined expansion} below. The structure of each step is similar: through a well-chosen asymptotic analysis ansatz, the desired bound is ultimately deduced from the non-degeneracy of solutions to some limit equation. This is precisely where  Corollary \ref{corollary u = 0} enters. We emphasize again that to obtain the precision required in \eqref{q eps bound theorem}, Corollary \ref{corollary u = 0} needs to be applied with $\tau \in (2,3)$, in which case solutions to the linearized equation \eqref{lin eq} may in general take a non-standard form like \eqref{u linear combi theorem}.

\subsection{A first quantitative bound}
\label{subsection first bound}

In a first step, we now prove the cruder one of the two bounds stated in Theorem \ref{theorem asmyptotic expansion additive}. Let us for convenience restate the  result of this subsection as follows. 

\begin{proposition}
\label{proposition r eps}
Let $i=1,...,n$. As $\eps \to 0$, for every $0 < \vartheta < 1$, 
\[ |(u_\eps - B_\ieps)(x)| \lesssim \mu_\eps^{\frac{1}{2} - \vartheta} |x-x_\ieps|^{1 + \vartheta}, \qquad \text{ for all } x \in \mathsf{b}_\ieps. \]

\end{proposition}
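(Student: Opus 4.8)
The plan is to run a blow-up/rescaling argument at the $i$-th concentration point and extract the desired decay from the non-degeneracy of the linearized bubble equation (Corollary \ref{corollary u = 0}), exactly as advertised before the statement of Theorem \ref{theorem asmyptotic expansion additive}. Concretely, fix $i$ and set $r_\ieps := u_\eps - B_\ieps$ on $\mathsf b_\ieps$. By Proposition \ref{proposition preliminaries multi} we know $r_\ieps = o(\mu_\ieps^{1/2})$ near $x_\ieps$ in a pointwise (scale-invariant) sense, so the object to control is the ratio $r_\ieps(x)/(\mu_\eps^{1/2-\vartheta}|x-x_\ieps|^{1+\vartheta})$. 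I would argue by contradiction: suppose the claimed bound fails for some $\vartheta \in (0,1)$, i.e.\ along a subsequence the supremum
\[
\Gamma_\eps := \sup_{x \in \mathsf b_\ieps} \frac{|r_\ieps(x)|}{\mu_\eps^{1/2-\vartheta}\,|x - x_\ieps|^{1+\vartheta}} \to \infty,
\]
attained (say) at a point $y_\eps$. One then has to locate $y_\eps$: the interior/boundary regularity and the definition of $\Gamma_\eps$ force $|y_\eps - x_\ieps|$ to tend to $0$ at a definite rate, and more precisely $s_\eps := |y_\eps - x_\ieps|/\mu_\ieps$ should be shown to go to $+\infty$ (if $s_\eps$ stayed bounded, the $C^1_\loc$ convergence in Proposition \ref{proposition preliminaries multi}(iv), together with the nondegeneracy at the bounded scale, would already contradict $\Gamma_\eps \to \infty$; here one uses that $B$ is a nondegenerate solution of $-\Delta u = u^5$ and that the standard kernel elements have the right homogeneity, see Corollary \ref{corollary u = 0} and Proposition \ref{u linear combi theorem}).

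The core step is then the rescaling. Put $R_\eps := |y_\eps - x_\ieps|$ and define
\[
v_\eps(z) := \frac{r_\ieps(x_\ieps + R_\eps z)}{\Gamma_\eps\, \mu_\eps^{1/2-\vartheta}\, R_\eps^{1+\vartheta}},
\]
so that $|v_\eps| \le |z|^{1+\vartheta}$ on a large ball (shrinking $\mathsf b_\ieps$ accordingly), $|v_\eps(z_\eps)| = 1$ for some $|z_\eps| = 1$, and $v_\eps$ solves the equation obtained by subtracting $-\Delta B_\ieps = B_\ieps^5$ from $-\Delta u_\eps + (a+\eps V)u_\eps = u_\eps^5$, rescaled. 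One has to check that the potential term $(a+\eps V)u_\eps$ and the difference $u_\eps^5 - B_\ieps^5$, after rescaling and dividing by $\Gamma_\eps \mu_\eps^{1/2-\vartheta}R_\eps^{1+\vartheta}$, have the right size: the linear part $5 B_\ieps^4 r_\ieps$ rescales to $5 (\text{rescaled }B_\ieps)^4 v_\eps$ with the rescaled bubble $\to 0$ locally uniformly away from $z=0$ when $s_\eps\to\infty$ (since $R_\eps/\mu_\ieps \to \infty$), the higher-order terms in the Taylor expansion of $u^5$ are lower order using the upper bound $u_\eps \lesssim \sum_j B_\jeps$ from Proposition \ref{proposition preliminaries multi}(v) and $v_\eps = o(\Gamma_\eps)$-type smallness, and the potential term is controlled by $\|a+\eps V\|_\infty R_\eps^2 \to 0$. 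Then elliptic estimates give $v_\eps \to v$ in $C^1_\loc(\R^3 \setminus \{0\})$ with $v \not\equiv 0$ (because $|v(z_0)| = 1$ at some $|z_0|=1$), $v$ harmonic away from $0$, satisfying the growth bound $|v(z)| \lesssim |z|^{1+\vartheta}$ on all of $\R^3$ and $|v(z)| \lesssim |z|$ near $0$ — the last because $r_\ieps$ vanishes to first order at $x_\ieps$ since $\nabla u_\eps(x_\ieps) = \nabla B_\ieps(x_\ieps) = 0$, so $v_\eps$ and its gradient vanish at the origin in the limit. A harmonic function on $\R^3\setminus\{0\}$ with $|v(z)| \lesssim |z|^{1+\vartheta}$, $\vartheta < 1$, and $|v(z)|\lesssim |z|$ near $0$ must be a linear function (no negative-power singularity is allowed by the growth near $0$; the bound at infinity kills all harmonic polynomials of degree $\ge 2$), hence $v(z) = \nabla v(0)\cdot z$; but $\nabla v(0)=0$ forces $v \equiv 0$, a contradiction. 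This yields the proposition.

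One must be careful about two bookkeeping points that I expect to be the main technical obstacle. First, the interaction of the $i$-th bubble with the other bubbles and with the boundary: on $\mathsf b_\ieps$ the other $B_\jeps$ ($j\ne i$) are smooth and $O(\mu_\eps^{1/2})$-small, so in the rescaled equation they contribute a term which, after dividing by $\Gamma_\eps \mu_\eps^{1/2-\vartheta}R_\eps^{1+\vartheta}$, is $O(\Gamma_\eps^{-1}R_\eps^{-1-\vartheta}\cdot\text{smooth})$ — this has to be shown to vanish, which uses $\Gamma_\eps \to \infty$ together with a lower bound on $R_\eps$; if instead $R_\eps$ is too small relative to $\mu_\ieps$ one is in the ``$s_\eps$ bounded'' regime already excluded. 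Second, one needs the supremum defining $\Gamma_\eps$ to actually be attained and the maximizing point to satisfy the rate separations claimed; this is where one has to combine the a priori bound $u_\eps \lesssim \sum_j B_\jeps$ with interior Schauder estimates for $r_\ieps$ (whose equation has Hölder coefficients since $a \in C^{1,\sigma}_\loc$), so that $r_\ieps$ is $C^1$ up to the relevant scale and the blow-up limit is genuinely $C^1_\loc$ away from the origin. The choice of exponent $\vartheta$ strictly less than $1$ is essential precisely so that the limiting harmonic function cannot be a nontrivial quadratic; this is the same mechanism by which one will later, in Subsection \ref{subsection refined expansion}, be forced to use $\tau \in (2,3)$ and the nonstandard solutions of \eqref{lin eq} from \eqref{u linear combi theorem} to push the expansion one order further. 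Once Proposition \ref{proposition r eps} is in hand, the bound \eqref{r eps bound theorem} of Theorem \ref{theorem asmyptotic expansion additive} is simply a restatement.
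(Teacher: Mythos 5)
Your proposal follows essentially the same route as the paper's proof: argue by contradiction, rescale about a near-maximum point $z_\ieps$ of the weighted ratio at scale $d_\ieps = |z_\ieps - x_\ieps|$, split into regimes according to $d_\ieps/\mu_\ieps$, and kill the nontrivial blow-up limit either by a Liouville argument (harmonic limit, growth $|z|^{1+\vartheta}$ with $\vartheta<1$, vanishing value and gradient at the origin) or by Corollary \ref{corollary u = 0} applied with $\tau = 1+\vartheta \in (1,2)$ in the delicate regime $d_\ieps \sim \mu_\ieps$. The one point your sketch glosses over is the regime $d_\ieps \sim 1$ (the paper's Case 2.b), where the rescaled domain does not exhaust $\R^3$ and no Liouville argument is available; there, however, the trivial bounds $u_\eps(z_\ieps) \lesssim \sum_j B_{j,\eps}(z_\ieps) \lesssim \mu_\eps^{1/2}$ and $B_\ieps(z_\ieps) \lesssim \mu_\eps^{1/2}$ already give $\Gamma_\eps \lesssim \mu_\eps^{\vartheta}$, so no blow-up is needed.
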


\begin{proof}
We denote $r_\ieps= u_\eps - B_\ieps$. We fix some $0 < \vartheta < 1$ and denote  
\begin{equation}
\label{R eps definition}
R_\ieps(x):= \frac{r_\ieps(x)}{|x-x_\ieps|^{1 + \vartheta}}.
\end{equation}

Fix some $z_\ieps \in \mathsf{b}_\ieps$ such that
\begin{equation}
\label{R eps (z eps)}
R_\ieps (z_\ieps) \geq \frac 12 \|R_\ieps \|_{L^\infty(\mathsf{b}_\ieps)}. 
\end{equation}
(Notice that $ \|R_\ieps \|_{L^\infty(\mathsf{b}_\ieps)} < \infty$, thanks to Taylor's expansion and the fact that $r_\ieps(x_\ieps) = \nabla r_\ieps(x_\ieps) = 0$ by Proposition \ref{proposition preliminaries multi}.)

Moreover, we denote $d_\ieps:= |x_\ieps - z_\ieps|$. Let us define the rescaled and normalized version
\begin{equation}
\label{bar r eps definition}
\bar{r}_\ieps(x):= \frac{r_\ieps(x_\ieps + d_\ieps x)}{r_\ieps(z_\ieps)}, \qquad x \in B(0, d_\ieps^{-1} \delta_0). 
\end{equation}
Then \eqref{R eps (z eps)} implies 
\begin{equation}
\label{r lesssim x vartheta}
\bar{r}_\ieps(x) \lesssim |x|^{1+\vartheta}, \qquad x \in B(0, d_\ieps^{-1} \delta_0),
\end{equation}
in particular $\bar{r}_\eps$ is uniformly bounded on compacts of $\R^3$. 

Abbreviating $a_\eps := a + \eps V$, we have, on $B(0, d_\ieps^{-1} \delta_0)$,
\begin{equation}
\label{bar r eps equation}
-\Delta \bar{r}_\ieps + d_\ieps^2 \bar{a}_\eps \frac{\bar{u}_\ieps}{r_\ieps(z_\ieps)} = \bar{r}_\ieps d_\ieps^2 \left(\bar{u}_\ieps^4 + \bar{u}_\ieps^3 \bar{B}_\ieps + \bar{u}_\ieps^2 \bar{B}_\ieps^2 + \bar{u}_\ieps \bar{B}_\ieps^3 + \bar{B}_\ieps^4 \right) 
\end{equation}
Here we wrote $\bar{u}_\ieps(x) :=  u_\eps(x_\ieps + d_\ieps x)$ and likewise $\bar{a}_\ieps (x) :=  a_\eps(x_\ieps + d_\ieps x)$ and $\bar{B}_\ieps(x) :=  B_\ieps(x_\ieps + d_\ieps x) = \mu_\ieps^{-1/2} B (\mu_\ieps^{-1} d_\ieps x)$. 

We treat three cases separately, depending on the ratio between $\mu_\eps$ and $d_\ieps$. 

\textit{Case 1. $\mu_\eps >> d_\ieps$ as $\eps \to 0$. } In that case, we have $\bar{B}_\ieps \lesssim \mu_\eps^{-1/2}$ uniformly on $\R^3$. Since $\bar{u}_\ieps \lesssim \bar{B}_\ieps$ on $\mathsf{b}_\ieps$, the right side of \eqref{bar r eps equation} therefore tends to zero uniformly on compacts in that case because $d_\ieps^2 \mu_\eps^{-2} \to 0$. 

Using $\bar{u}_\ieps \lesssim \bar{B}_\ieps \lesssim \mu_\ieps^{-1/2}$ and $\frac{1}{r_\ieps(z_\ieps)} \lesssim d_\ieps^{-1-\vartheta} \frac{1}{\|R_\ieps\|_\infty}$ by \eqref{R eps (z eps)}, the second summand on the left side of \eqref{bar r eps equation} is bounded by 
\begin{align*}
\left| d_\ieps^2 \bar{a}_\ieps \frac{\bar{u}_\ieps}{r_\ieps(z_\ieps)} \right| \lesssim \frac{d_\ieps^{1-\vartheta} \mu_\ieps^{-1/2}}{\|R_\ieps\|_{L^\infty(\mathsf{b}_\ieps)}}  \lesssim \frac{\mu_\eps^{1/2 - \vartheta}}{\|R_\ieps\|_{L^\infty(\mathsf{b}_\ieps)}}.
\end{align*}
Now suppose that for contradiction that $\|R_\ieps\|_{L^\infty(\mathsf{b}_\ieps)} >> \mu_\eps^{1/2 - \vartheta}$ as $\eps \to 0$. Then this term goes to zero uniformly. Thus the limit $\bar{r}_{i,0} := \lim_{\eps \to 0} \bar{r}_\ieps$ satisfies 
\[ -\Delta \bar{r}_{i,0} = 0 \qquad \text{ on } \R^3. \]
By Liouville's theorem, the growth bound \eqref{r lesssim x vartheta} implies that $\bar{r}_{i,0}(x) = b \cdot x + c $ for some  $b\in \R^3$, $c \in \R$. On the other hand, still by \eqref{r lesssim x vartheta}, we find $b = \nabla \bar{r}_{i,0}(0) = 0$ and  $c = \bar{r}_{i,0}(0) = 0$,  and  hence $\bar{r}_{i,0} \equiv 0$. But by the choice of $d_\ieps$, there is $\xi_\ieps := \frac{z_\ieps - x_\ieps}{d_\ieps} \in \mathbb S^2$ such that $\bar{r}_\ieps(\xi_\ieps) = 0$. Up to a subsequence, $\xi_{i,0} := \lim_{\eps \to 0} \xi_\ieps \in \mathbb S^2$ exists and satisfies $\bar{r}_{i,0}(\xi_{i,0}) = 1$. This contradicts $\bar{r}_{i,0} \equiv 0$. 

Thus we must have $\|R_\ieps\|_{L^\infty(\mathsf{b}_\ieps)} \lesssim \mu_\ieps^{\frac 12 - \vartheta}$, i.e. $r_\ieps(x) \lesssim \mu_\ieps^{\frac{1}{2} - \vartheta} |x-x_\ieps|^{1 + \vartheta}$. 

\textit{Case 2.a) $\mu_\eps << d_\ieps <<1$ as $\eps \to 0$. } This case works similarly, but we need to argue a little more carefully. This is because the relevant bound $\bar{B}_\ieps \lesssim \mu_\eps^{1/2} d_\ieps^{-1}|x|^{-1} \lesssim \mu_\eps^{1/2} d_\ieps^{-1}$ now only holds on compacts of $\R^3 \setminus \{0\}$ and no convergence holds at the origin. Nevertheless, we have 
\[ \bar{r}_\ieps d_\ieps^2 (\bar{u}_\ieps^4 + ... + \bar{B}_\ieps^4) \lesssim d_\ieps^2 \bar{B}_\ieps^4 \lesssim \mu_\eps^2 d_\ieps^{-2} \to 0 \]
and 
\[ \left| d_\ieps^2 \bar{a}_\ieps \frac{\bar{u}_\ieps}{r_\ieps(z_\ieps)} \right| \lesssim \frac{d_\ieps^{-\vartheta} \mu_\eps^{1/2}}{\|R_\ieps\|_{L^\infty(\mathsf{b}_\ieps)}}  \lesssim \frac{\mu_\eps^{1/2 - \vartheta}}{\|R_\ieps\|_{L^\infty(\mathsf{b}_\ieps)}} \]
uniformly on compacts of $\R^3 \setminus \{0\}$. 
If $\|R_\ieps\|_\infty >> \mu_\eps^{1/2 - \vartheta}$, then, using that still $d_\ieps \to 0$, $\bar{r}_{i,0} := \lim_{\eps \to 0} \bar{r}_\ieps$ satisfies 
\[ -\Delta \bar{r}_{i,0} = 0 \qquad \text{ on } \R^3 \setminus \{0\}. \]
 But by \eqref{r lesssim x vartheta}, $\bar{r}_{i,0}$ is bounded near $0$ and thus can be extended to a harmonic function on all of $\R^3$. A Taylor expansion together with \eqref{r lesssim x vartheta} now shows that $\bar{r}_{i,0} (0) = \nabla \bar{r}_{i,0}(0) = 0$. As in Case 1, we can now derive a contradiction. 

Thus we must have $\|R_\ieps\|_{L^\infty(\mathsf{b}_\ieps)} \lesssim \mu_\ieps^{\frac 12 - \vartheta}$, i.e. $r_\ieps(x) \lesssim \mu_\eps^{\frac{1}{2} - \vartheta} |x-x_\ieps|^{1 + \vartheta}$, also in this case. 

\textit{Case 2.b) $d_\ieps \sim 1$ as $\eps \to 0$. } In this case there is no need for a blow-up argument. Instead, we can simply bound, by the definition of $z_\ieps$, 
\[ \frac{|r_\ieps(x)|}{|x-x_\ieps|^{1+ \vartheta}} \lesssim \frac{|r_\ieps(z_\ieps)|}{d_\ieps^{1 + \vartheta}} \lesssim |r_\ieps(z_\ieps)| \lesssim \mu_\ieps^{1/2}, \]
where the last inequality simply comes from the bound $|u_\eps| \lesssim B_\ieps$ on $\mathsf{b}_\ieps$ and the observation that $d_\ieps \sim 1$ implies $B_\ieps(z_\ieps) \lesssim \mu_\eps^{1/2}$. Thus 
\[ |r_\ieps(x)| \lesssim \mu_\eps^{1/2} |x-x_\ieps|^{1+\vartheta} \leq \mu_\eps^{1/2- \vartheta} |x-x_\ieps|^{1+\vartheta}, \]
which completes the discussion of this case. 

\textit{Case 3. $\mu_\eps \sim d_\ieps$ as $\eps \to 0$. } This is the most delicate case because the right side of \eqref{bar r eps equation} now tends to a non-trivial limit. Indeed, $\beta_{i,0} := \lim_{\eps \to 0} \beta_\ieps := \lim_{\eps \to 0} \frac{\mu_\ieps}{d_\ieps}$ exists and $\beta_{i,0}  \in (0,\infty)$. Then 
\[ d_\ieps^{1/2} \bar{B}_\ieps = \frac{´\beta_\ieps^{1/2} }{(\beta_\ieps^2 + \frac{|x|^2}{3})^{1/2}} \to \frac{´\beta_{i,0}^{1/2} }{(\beta_{i,0}^2 + \frac{|x|^2}{3})^{1/2}} =: B_{0,\beta_{i,0}}. \]
By the convergence of $u_\eps$ from Proposition \ref{proposition preliminaries multi}, we also have $d_\ieps^{1/2} \bar{u}_\ieps \to B_{0,\beta_{i,0}}$ uniformly on compacts of $\R^3$. Moreover
\[ \left| d_\ieps^2 \bar{a}_\ieps \frac{\bar{u}_\ieps}{r_\ieps(z_\ieps)} \right| \lesssim \frac{d_\ieps^{\frac 12 -\vartheta} }{\|R_\ieps\|_{L^\infty(\mathsf{b}_\ieps)}}  \lesssim \frac{\mu_\eps^{\frac 12 - \vartheta}}{\|R_\ieps\|_{L^\infty(\mathsf{b}_\ieps)}} \]
If $\|R_\ieps\|_{L^\infty(\mathsf{b}_\ieps)} >> \mu_\ieps^{1/2 - \vartheta}$, we therefore recover the limit equation
\[  -\Delta \bar{r}_{i,0} = 5 \bar{r}_{i,0} B_{0, \beta_{i,0}}^4  \qquad \text{ on } \R^3, \]
which is precisely the linearized equation \eqref{lin eq}, up to a harmless rescaling. By \eqref{r lesssim x vartheta}, we have $|r_{i,0}(x)| \lesssim |x|^{1+ \vartheta}$ for all $x \in \R^3$. Thus by Corollary \ref{corollary u = 0} we conclude $\bar{r}_{i,0} \equiv 0$. This contradicts $\bar{r}_{i,0}(\xi_{i,0}) = 1$, as desired. 

Thus we  have shown $\|R_\ieps\|_{L^\infty(\mathsf{b}_\ieps)} \lesssim \mu_\eps^{\frac 12 - \vartheta}$, i.e. $r_\ieps(x) \lesssim \mu_\ieps^{\frac{1}{2} - \vartheta} |x-x_\ieps|^{1 + \vartheta}$, also in the third and final case. 
\end{proof}

\subsection{A refined expansion}
\label{subsection refined expansion}

With Proposition \ref{proposition r eps} at hand, we now complete the proof of Theorem \ref{theorem asmyptotic expansion additive} by proving the bound \eqref{q eps bound theorem} on 
\[ 	q_\ieps = u_\ieps - B_\ieps - \mu_\ieps^2 W_\ieps, \]
where $W_\ieps$ is defined in \eqref{W ieps definition}. Again, we restate the bound here for convenience. 

\begin{proposition}
\label{proposition q}
As $\eps \to 0$, for all $0 < \nu  <1$,
\[ |q_\ieps(x)| \lesssim \mu_\eps^{\frac{1}{2} - \nu} |x-x_\ieps|^{2 + \nu}, \qquad \text{ for all } x \in \mathsf{b}_\ieps. \]
\end{proposition}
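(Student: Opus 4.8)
The plan is to mimic the blow-up scheme of Proposition \ref{proposition r eps}, but now with the improved ansatz. The point of subtracting $\mu_\ieps^2 W_\ieps$ is that $B_\ieps + \mu_\ieps^2 W_\ieps$ solves the bubble equation up to an error of order $\mu_\ieps^{5/2}$ (one checks from \eqref{w equation} that $-\Delta(B_\ieps + \mu_\ieps^2 W_\ieps) = B_\ieps^5 + \mu_\ieps^2(5 B_\ieps^4 W_\ieps) - \mu_\ieps^2 a_\eps(x_\ieps) B_\ieps + O(\mu_\ieps^{\ldots})$, i.e. the linear term $-a_\eps(x_\ieps) u_\eps$ near $x_\ieps$ is now correctly cancelled to leading order), so that $q_\ieps$ satisfies an equation whose right-hand side is genuinely smaller than the one for $r_\ieps$. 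I would set $Q_\ieps(x) := q_\ieps(x)/|x-x_\ieps|^{2+\nu}$, pick $z_\ieps \in \mathsf b_\ieps$ nearly attaining $\|Q_\ieps\|_{L^\infty(\mathsf b_\ieps)}$, put $d_\ieps := |x_\ieps - z_\ieps|$, rescale $\bar q_\ieps(x) := q_\ieps(x_\ieps + d_\ieps x)/q_\ieps(z_\ieps)$, and argue by contradiction assuming $\|Q_\ieps\|_{L^\infty(\mathsf b_\ieps)} \gg \mu_\eps^{1/2 - \nu}$.

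\textbf{Key steps in order.} First I would record the pointwise consequences of Proposition \ref{proposition r eps}: since $u_\ieps = B_\ieps + r_\ieps$ with $|r_\ieps| \lesssim \mu_\eps^{1/2 - \vartheta} |x - x_\ieps|^{1+\vartheta}$, and $|W_\ieps| \lesssim \mu_\ieps^{-1/2}(1 + |x-x_\ieps|^2/\mu_\ieps^2)^{1/2}$ roughly (the decay of $W$ at infinity being at worst like $B$ times a log or a power, from Lemma \ref{lemma w0 variation of constants}), I would need sharp bounds on $W_{\mu,x_0}$ and its behavior both near $x_\ieps$ and at the scale of $\mathsf b_\ieps$. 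Second, I would derive the equation for $q_\ieps$ on $\mathsf b_\ieps$: expanding $u_\ieps^5 = (B_\ieps + \mu_\ieps^2 W_\ieps + q_\ieps)^5$ and using $-\Delta B_\ieps = B_\ieps^5$ and \eqref{w equation}, one gets $-\Delta q_\ieps = 5 B_\ieps^4 q_\ieps + (\text{error terms})$, where the errors collect (i) the defect of $\mu_\ieps^2 W_\ieps$ in approximating the true correction, (ii) the linear term $-a_\eps q_\ieps - (a_\eps - a_\eps(x_\ieps)) B_\ieps - a_\eps \mu_\ieps^2 W_\ieps$ — here the $C^{1,\sigma}$ regularity of $a$ is used to bound $(a_\eps - a_\eps(x_\ieps))B_\ieps \lesssim |x-x_\ieps| B_\ieps$ — and (iii) the super-linear remainder in the binomial expansion, controlled by $B_\ieps^3 (\mu_\ieps^2 W_\ieps + q_\ieps)^2 + \ldots$ using Proposition \ref{proposition r eps} to bound $q_\ieps$ a priori. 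Third, I would split into the same three (or four) cases according to the ratio $\mu_\eps / d_\ieps$ as in Proposition \ref{proposition r eps}: in the cases $\mu_\eps \gg d_\ieps$, $\mu_\eps \ll d_\ieps$, $d_\ieps \sim 1$ one gets in the limit $-\Delta \bar q_{i,0} = 0$ on $\R^3$ (or $\R^3 \setminus \{0\}$, then extended), and the growth bound $|\bar q_{i,0}(x)| \lesssim |x|^{2+\nu}$ together with $\bar q_{i,0}(0) = \nabla \bar q_{i,0}(0) = 0$ and — crucially — the vanishing of the \emph{second-order} Taylor coefficient (which comes from the fact that $\nabla^2 q_\ieps(x_\ieps)$ can be arranged to vanish, since $B_\ieps + \mu_\ieps^2 W_\ieps$ matches $u_\ieps$ to second order at $x_\ieps$: $W(0) = \nabla W(0) = 0$ by \eqref{w equation}, and $\nabla u_\eps(x_\ieps) = 0$) forces $\bar q_{i,0} \equiv 0$, contradicting $\bar q_{i,0}(\xi_{i,0}) = 1$. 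In the delicate case $\mu_\eps \sim d_\ieps$, the limit equation is the linearized equation $-\Delta \bar q_{i,0} = 5 B_{0,\beta_{i,0}}^4 \bar q_{i,0}$ on $\R^3$ with $|\bar q_{i,0}(x)| \lesssim |x|^{2+\nu}$; here $\tau = 2 + \nu \in (2,3)$, so Corollary \ref{corollary u = 0} must be invoked in exactly the regime where the non-standard solutions of \eqref{u linear combi theorem} appear, and one must check that the vanishing conditions at $0$ (including the second-order one) and the precise growth exponent $< 3$ rule out all of them, yielding $\bar q_{i,0} \equiv 0$.

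\textbf{Main obstacle.} The hardest point is controlling the error terms in the $q_\ieps$-equation with enough precision to close the argument — specifically, verifying that after the rescaling by $1/q_\ieps(z_\ieps)$ and the assumed lower bound $\|Q_\ieps\|_\infty \gg \mu_\eps^{1/2-\nu}$, every error term genuinely tends to zero uniformly on compacts (resp. on compacts of $\R^3 \setminus \{0\}$). This requires the sharp decay estimates on $W$ from the appendix, a careful accounting of the exponent budget in each of the terms $B_\ieps^3 (\mu_\ieps^2 W_\ieps)^2$, $B_\ieps^3 q_\ieps^2$, $(a_\eps - a_\eps(x_\ieps)) B_\ieps$, etc., and in the borderline Case $\mu_\eps \sim d_\ieps$ the interplay with the slightly-better-than-quadratic growth exponent. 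The second subtlety, genuinely new compared to Proposition \ref{proposition r eps}, is the need to apply the non-degeneracy statement Corollary \ref{corollary u = 0} with $\tau \in (2,3)$ and to confirm that the second-order vanishing at the origin (a consequence of the normalization built into $B_\ieps + \mu_\ieps^2 W_\ieps$) kills the extra polynomial-type solutions that Proposition \ref{u linear combi theorem} allows in that range.
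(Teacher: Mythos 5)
Your proposal follows essentially the same route as the paper's proof: the same normalized quotient $Q_\ieps(x) = q_\ieps(x)/|x-x_\ieps|^{2+\nu}$, the same near-maximizer $z_\ieps$ and rescaling $\bar q_\ieps$, the same case analysis in $\mu_\eps/d_\ieps$, a Liouville argument for degree-$\le 2$ harmonic polynomials in the non-resonant cases, and Corollary \ref{corollary u = 0} with non-integer $\tau = 2+\nu \in (2,3)$ in the resonant case $d_\ieps \sim \mu_\eps$. Two small points: the vanishing of the second-order Taylor coefficient of $\bar q_{i,0}$ at the origin does \emph{not} come from $B_\ieps + \mu_\ieps^2 W_\ieps$ matching $u_\eps$ to second order at $x_\ieps$ (only value and gradient match there; the Hessians need not), but follows directly from the uniform bound $|\bar q_\ieps(x)| \lesssim |x|^{2+\nu}$ down to $x=0$, which is how the paper argues; and the paper sidesteps separate decay estimates on $W$ by grouping $\mu_\ieps^2 W_\ieps + q_\ieps = r_\ieps$ in the linear term of the equation for $q_\ieps$, so that only the bound on $r_\ieps$ from Proposition \ref{proposition r eps} is needed.
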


\begin{proof}
We abbreviate $a_\eps = a + \eps V$. 
Using the definition of the $W_{jk}$ in \eqref{w equation} and of $W_\ieps$ in \eqref{W ieps definition}, we find 
\begin{align*}
-\Delta W_\ieps - 5 B_\ieps^4 W_\ieps &= \mu_\ieps^\frac{1}{2} B_\eps (\Delta u_\eps - \Delta B_\eps)(x_\ieps) \\
&= \mu_\ieps^\frac{1}{2} B_\eps a_\eps(x_\ieps) u(x_\ieps) = a_\eps(x_\ieps) B_\eps. 
\end{align*} 
With this it is easily checked that $q_\ieps$ satisfies the equation 
\begin{equation}
\label{q equation}
-\Delta q_\ieps + a_\eps r_\ieps + (a_\eps - a_\eps(x_\ieps)) B_\ieps  = 5 B_\ieps^4 q_\ieps + \mathcal O( r_\ieps^2 B_\ieps^3). 
\end{equation}
We now insert the bounds $|a_\eps| \lesssim 1$, and $|a_\eps(x) - a_\eps(x_\ieps)| \lesssim |x-x_\ieps|$ because $a, V \in C^1(\Omega)$. Together with the previously proved bounds $u_\eps \lesssim B_\ieps$ on $\mathsf{b}_\ieps$ from Proposition \ref{proposition preliminaries multi} and $r_\ieps \lesssim \mu_\eps^{1/2 - \nu} |x-x_\ieps|^{1 + \nu}$ on $\mathsf{b}_\ieps$ from Proposition \ref{proposition r eps}, we find that on $\mathsf{b}_\ieps$, 
\begin{equation}
\label{q equation bis}
\left| -\Delta q_\ieps - 5 q_\ieps B_\ieps^4 \right| \lesssim |x-x_\ieps|  B_\ieps +  \mu_\eps^{1/2 - \nu} |x-x_\ieps|^{1+\nu} + \mu_\eps^{1 - 2 \nu} |x-x_\ieps|^{2 + 2 \nu} B_\ieps^3 
\end{equation}
Now for some $0 < \nu < 1$, let 
\[ Q_\ieps(x) := \frac{q_\ieps(x)}{|x-x_\ieps|^{2+\nu}} \]
and denote by $z_\ieps \in \mathsf{b}_\ieps$ a point where $Q_\ieps(z_\ieps) \geq \frac 12 \|Q_\ieps\|_{L^\infty(\mathsf{b}_\ieps)}$ and set $d_\ieps := |z_\ieps - x_\ieps|$. 
(Notice that, by construction of $W_\ieps$, the function $q_\ieps$ satisfies $q_\ieps(x_\ieps) = \nabla q_\ieps(x_\ieps) = D^2 q_\ieps(x_\ieps) = 0$. Hence $\Vert Q_\ieps\|_{L^\infty(\mathsf{b}_\ieps)} < \infty$ by Taylor's theorem.)
We introduce the function 
\[ \bar{q}_\ieps(x) := \frac{q_\ieps(x_\ieps + d_\ieps x)}{q_\ieps(z_\ieps)}, \qquad   x \in B(0, d_\ieps^{-1} \delta_0), \] 
which satisfies 
\begin{equation}
\label{bar q lesssim x 1 + nu}
\bar{q}_\ieps(x) \lesssim |x|^{2 +\nu}.
\end{equation}
Moreover, multiplying \eqref{q equation bis} by $\frac{d_\ieps^2}{q_\ieps(z_\ieps)}$ and observing that $\frac{1}{q_\ieps(z_\ieps)} \leq 2 \frac{d_\ieps^{-2 - \nu}}{\|Q_\ieps\|_{L^\infty(\mathsf{b}_\ieps)}}$, 
\begin{equation}
\label{bar q equation}
\left| -\Delta \bar{q}_\ieps - 5 d_\ieps^2 \bar{q}_\ieps \bar{B}_\ieps^4 \right| \lesssim \frac{1}{\|Q_\ieps\|_{L^\infty(\mathsf{b}_\ieps)}} \left( d_\ieps^{1-\nu}  \bar{B}_\ieps +  \mu_\eps^{1/2 - \nu} d_\ieps + \mu_\eps^{1 - 2 \nu} d_\ieps^{2 + \nu} \bar{B}_\ieps^3  \right),
\end{equation}
locally on $\R^3$. 
Now we again distinguish three cases. Since the argument is analogous to that of Proposition \ref{proposition r eps}, we shall be a bit briefer here. 

\textit{Cases 1 and 2.a) $\mu_\eps >> d_\ieps$ or $\mu_\eps << d_\ieps << 1$ as $\eps \to 0$. } In these cases $\bar{B}_\ieps \lesssim \mu_\eps^{-1/2}$ and $\bar{B}_\ieps \lesssim d_\ieps^{-1} \mu_\eps^{1/2}$ respectively. In both cases one finds again that $d_\ieps^2 \bar{B}_\ieps^4 \to 0$. Moreover, using $0 < \nu < 1$  the right side of \eqref{bar q equation} can be bounded by 
\[ \frac{\mu_\eps^{\frac{1}{2} - \nu}}{\|Q_\ieps\|_{L^\infty(\mathsf{b}_\ieps)}}. \]
If $\|Q_\ieps\|_{L^\infty(\mathsf{b}_\ieps)} >> \mu_\eps^{\frac{1}{2} - \nu}$, then the limit $\bar{q}_{i,0}$ satisfies
\[ -\Delta \bar{q}_{i,0} = 0 \quad \text{ on } \R^3,  \qquad \bar{q}_{i,0}(x) \lesssim |x|^{2 + \nu} \quad \text{ on } \R^3. \]
This bound implies on the one hand
\[ \bar{q}_{i,0}(0) = 0, \quad \nabla \bar{q}_{i,0}(0) = 0, \quad D^2 \bar{q}_{i,0}(0) = 0, \]
and on the other hand
\[ \bar{q}_{i,0}(x) = a + b \cdot x + \langle x, C x \rangle \]
for some $a \in \R$, $b \in \R^3$, $C \in \R^{3 \times 3}$, where we can assume $C$ symmetric. It is easy to see that in fact $a = \bar{q}_{i,0}(0) = 0$, $b = \nabla \bar{q}_{i,0}(0)= 0$ and $C = \frac{1}{2} D^2 \bar{q}_{i,0}(0) = 0$. Hence $\bar{q}_{i,0} \equiv 0$. 
As before, this contradicts the fact that $\bar{q}_{i,0}(\xi_{i,0}) = 1$ for some $\xi_{i,0} \in \mathbb S^2$. Thus $\|Q_\ieps\|_{L^\infty(\mathsf{b}_\ieps)} \lesssim \mu_\eps^{\frac{1}{2} - \nu}$, which implies the proposition in Cases 1 and 2.a).

\textit{Case 2.b) $d_\ieps \sim 1$ as $\eps \to 0$.   } 

As above, the definition of $z_\ieps$ gives, on $\mathsf{b}_\ieps$, 
\[ \frac{|q_\ieps(x)|}{|x-x_\ieps|^{2+ \vartheta}} \lesssim \frac{|q_\ieps(z_\ieps)|}{d_\ieps^{2 + \vartheta}} \lesssim |q_\ieps(z_\ieps)| \lesssim \mu_\eps^{1/2}.  \]
The last inequality comes from the bound $|u_\eps| \lesssim B_\ieps$ on $\mathsf{b}_\ieps$ and the observation that $d_\ieps \sim 1$ implies $B_\ieps(z_\ieps) \lesssim \mu_\eps^{1/2}$. This gives the claimed bound in this case. 

\textit{Case 3. $\mu_\ieps \sim d_\ieps$ as $\eps \to 0$. } Let $\beta_{i,0} = \lim_{\eps \to 0} \beta_\ieps = \lim_{\eps \to 0} \frac{\mu_\ieps}{d_\ieps}$, then $d_\ieps^2 \bar{B}_\ieps^4 \to \frac{\beta_{i,0}^2}{(\beta_{i,0}^2 + |x|^2/3)^2} =: B_{0, \beta_{i,0}}^4$. If $\|Q_\ieps\|_{L^\infty(\mathsf{b}_\ieps)} >> \mu_\eps^{\frac{3}{2} - \nu}$, then by the same estimates on \eqref{bar q equation} as in Case 1, the right side of \eqref{bar q equation} tends to zero and the limit function $\bar{q}_{i,0}$ satisfies
\[ -\Delta \bar{q}_{i,0} = 5 \bar{q}_{i,0} B_{0, \beta_{i,0}}^4 \quad \text{ on } \R^3, \qquad  |\bar{q}_{i,0}(x)| \lesssim |x|^{2+\nu} \quad \text{ on } \R^3.  \]
Now we can invoke Corollary \ref{corollary u = 0} to obtain $\bar{q}_{i,0}(0) \equiv 0$, which contradicts $\bar{q}_{i,0}(\xi_{i,0}) =1$. Thus $\|Q_\ieps\|_{L^\infty(\mathsf{b}_\ieps)} \lesssim \mu_\eps^{\frac{1}{2} - \nu}$ also in this case, and the proof of the proposition is complete. 
\end{proof}

\section{Proof of Theorem \ref{theorem multibubble}}
\label{section proof of thm}

   \subsection{The main expansions}

By applying the expansion of $u_\eps$ near the concentration points derived in Theorem \ref{theorem asmyptotic expansion additive}, we can prove the following expansions. 

We will also need the matrix $\tilde{M}_a^l(\bm{x}) \in \R^{n \times n} = (\tilde{m}_{ij}^l(\bm{x}))_{i,j = 1}^n$ with entries
\begin{equation}
\label{m ij tilde definition}
\tilde{m}_{ij}^l(\bm{x}) := 
\begin{cases}
\partial_l \phi_a(x_i) & \text{ for } i = j, \\
- 2 \partial_l^x G_a(x_i, x_j) & \text{ for } i \neq j. 
\end{cases}
\end{equation}

Finally, recall that we have defined in Proposition \ref{proposition preliminaries multi} $\lambda_{i,0} = \lim_{\eps \to 0} \lambda_{\ieps} = \lim_{\eps \to 0} \frac{\mu_\ieps^{1/2}}{\mu_\oeps^{1/2}}$. Now define $\widetilde{\mathcal G}:= 4 \pi \sqrt 3 \sum_i \lambda_{i,0} G_a(x, x_i)$ and denote 
\[ Q_V(y):= \int_\Omega V(x) \widetilde{\mathcal G}(x) G_a(x, y) \diff x.   \]

Then the following expansions hold. 
 
\begin{proposition}
\label{proposition expansions}
Let $\bm{x}_\eps = (x_\oeps,...,x_{n,\eps}) \in \R^{3n}$ and $\bm{\lambda}_\eps = (\lambda_\oeps,...\lambda_{n,\eps})$ be as in Proposition \ref{proposition preliminaries multi}. 
As $\eps \to 0$, we have
\begin{equation}
\label{expansion G a line i}
 \eps (Q_V(x_\ieps) + o(1)) = - 4 \pi \sqrt 3 (M_a(\bm{x}_\eps) \cdot \bm{\lambda}_\eps)_i + 3 \pi (a(x_\ieps) +o(1)) \lambda_\ieps \mu_\ieps 
\end{equation}
and, for every $0 < \nu < 1$, 
\begin{equation}
\label{expansion nabla G a line i}
(\tilde{M}^l_a(\bm{x}_\eps) \cdot \bm{\lambda}_\eps)_i = \mathcal O(\eps + \mu_\eps^\nu),
\end{equation}
\end{proposition}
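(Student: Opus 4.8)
The plan is to derive both expansions by testing the equation \eqref{brezis peletier additive} for $u_\eps$ against suitable test functions built out of the Green's function $G_a$ and then inserting the sharp pointwise expansion $u_\eps = B_\ieps + \mu_\ieps^2 W_\ieps + q_\ieps$ from Theorem \ref{theorem asmyptotic expansion additive}. For \eqref{expansion G a line i} the natural choice is to integrate the PDE against $G_a(x_\ieps, \cdot)$ (or rather a regularized/cutoff version near the singularity), which by \eqref{Ga-pde} reproduces $u_\eps(x_\ieps) = \mu_\ieps^{-1/2}$ on the left and a bulk integral $\int_\Omega u_\eps^5 G_a(x_\ieps, \cdot)$ on the right; for \eqref{expansion nabla G a line i} one tests instead against $\partial_l^x G_a(x_\ieps, \cdot)$, which accounts for the factor $2$ appearing in \eqref{m ij tilde definition} (differentiating the $|x-y|^{-1}$ singularity and using the symmetry of $G_a$). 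The whole computation is a multibubble refinement of the single-bubble Pohozaev/Green's-identity arguments in \cite{Frank2021b} and \cite{Rey1989}.

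\textbf{Key steps for \eqref{expansion G a line i}.} First I would split $\Omega$ into the small balls $\mathsf b_{j,\eps}$ around each concentration point and the complement, using Proposition \ref{proposition preliminaries multi}(v) ($u_\eps \lesssim \sum_j B_\jeps$) to control the far region. On each ball $\mathsf b_{j,\eps}$ I substitute $u_\eps = B_\jeps + \mu_\jeps^2 W_\jeps + q_\jeps$ and expand $u_\eps^5$; the leading term $\int B_\jeps^5 G_a(x_\ieps,\cdot)$ produces, after rescaling $x = x_\jeps + \mu_\jeps y$ and using $\int_{\R^3} B^5 = 4\pi\sqrt3 \cdot(\text{const})$ together with $G_a(x_\ieps, x_\jeps+\mu_\jeps y) = \frac{1}{4\pi\mu_\jeps|y|}\mathbf 1_{i=j} - H_a(x_\ieps,x_\jeps) + o(1)$, exactly the combination $-4\pi\sqrt3 (M_a(\bm x_\eps)\cdot\bm\lambda_\eps)_i$ once one divides through by $\mu_\oeps^{1/2}$ and recalls $m_{ii}=\phi_a(x_i)$, $m_{ij}=-G_a(x_i,x_j)$. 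The cross term $5\int B_\jeps^4(\mu_\jeps^2 W_\jeps) G_a(x_\ieps,\cdot)$, together with the $\eps V$ and the $a u_\eps$ terms on the left-hand side of the PDE, is where $W$ was designed to help: by the defining equation \eqref{w equation}, $-\Delta W - 5WB^4 = -B$, the $\mu_\ieps$-order contribution collapses to the single-bubble term $3\pi a(x_\ieps)\lambda_\ieps\mu_\ieps$ (the constant $3\pi$ being $\int_{\R^3} B$ up to normalization), while the $\eps$-order contribution assembles into $\eps Q_V(x_\ieps)$ because $\widetilde{\mathcal G}$ is precisely $4\pi\sqrt3\sum_j\lambda_{j,0}G_a(\cdot,x_j)$, the limit profile of $\mu_\oeps^{-1/2}u_\eps$ from part (ii) of the main theorem. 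The remainder terms — those involving $q_\jeps$, the quadratic $\mathcal O(r_\jeps^2 B_\jeps^3)$, the difference $a_\eps(x)-a_\eps(x_\jeps)$, and the boundary/cutoff errors — must all be shown to be $o(\eps + \mu_\eps)$ relative to the scales in \eqref{expansion G a line i}, which is exactly what the superquadratic bounds \eqref{r eps bound theorem}--\eqref{q eps bound theorem} are tailored to give (the gain from $|x-x_\ieps|^{2+\nu}$ over the naive $|x-x_\ieps|$ is what makes these errors subleading).

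\textbf{Key steps for \eqref{expansion nabla G a line i} and the main obstacle.} The second identity is obtained the same way but testing against $\partial_l^x G_a(x_\ieps,\cdot)$; now the leading bubble term is odd in $y$ at scale $\mu_\jeps$ so its naive size cancels, and what survives at order $\mu_\jeps^{-1/2}$ is $(\tilde M_a^l(\bm x_\eps)\cdot\bm\lambda_\eps)_i$, while the right-hand side collects an $\eps$-term (from $\eps V$) and a $\mu_\eps^\nu$-type error (from the odd part of the correction and the Hölder modulus of continuity of $\nabla G_a$, using $a\in C^{1,\sigma}_{\mathrm{loc}}$), giving the stated $\mathcal O(\eps+\mu_\eps^\nu)$. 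The main obstacle I anticipate is bookkeeping the interaction terms carefully enough: one must track the off-diagonal contributions $\int_{\mathsf b_\jeps} B_\jeps^5 G_a(x_\ieps,\cdot)$ for $i\neq j$ (these are the source of the off-diagonal entries of $M_a$ and $\tilde M_a^l$) and verify that all genuinely lower-order pieces — in particular the ones mixing two different bubbles at order higher than $\mu_\eps$, and the tail integrals over $\Omega\setminus\bigcup_j\mathsf b_\jeps$ where one only has the crude bound $u_\eps\lesssim\sum B_\jeps$ — are absorbed into the $o(1)$, $o(\eps)$, or $\mathcal O(\mu_\eps^\nu)$ error terms. This requires the precise $|x-x_\ieps|^{2+\nu}$ decay of $q_\ieps$ rather than a mere $|x-x_\ieps|$ bound; the explicit constants $4\pi\sqrt3$, $3\pi$ and the precise shape of $W$ (computed in the second appendix) enter only at the very end, so the analytic heart is the error control, not the identification of the main terms.
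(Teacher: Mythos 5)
Your plan for \eqref{expansion G a line i} is essentially the paper's proof: test the equation against $G_a(x_\ieps,\cdot)$, split $\Omega$ into the balls $\mathsf{b}_\jeps$ and their complement, insert $u_\eps=B_\jeps+\mu_\jeps^2W_\jeps+q_\jeps$ on each ball, read off the diagonal contribution from the local expansion of $G_a$ (Lemma \ref{lemma int B^5 Ga}) together with the explicit integral $\int W B^4|x|^{-1}$ (Lemma \ref{lemma integral w0}), the off-diagonal contributions from $G_a(x_\ieps,x_\jeps)$, and the $\eps Q_V$ term from $\eps\int_\Omega Vu_\eps G_a(x_\ieps,\cdot)\diff x$ via the convergence $\mu_\oeps^{-1/2}u_\eps\to\widetilde{\mathcal G}$; the error budget $o(\eps\mu_\eps^{1/2}+\mu_\eps^{3/2})$ and the role of the superquadratic bound on $q_\ieps$ are exactly as you describe. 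Two small inaccuracies: on $\mathsf{b}_\jeps$ with $j\neq i$ the relevant constant is $G_a(x_\ieps,x_\jeps)$ itself, not $-H_a(x_\ieps,x_\jeps)$; and $3\pi$ is not ``$\int_{\R^3}B$ up to normalization'' ($B\notin L^1$) --- it arises as $3+3(\pi-1)$, the first summand from the $\frac{a(y)}{8\pi}|z-y|$ term in the expansion of $G_a$ and the second from $\frac{5}{4\pi}\int WB^4|x|^{-1}\diff x$.

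The genuine gap is in \eqref{expansion nabla G a line i}: you test against $\partial_l^xG_a(x_\ieps,\cdot)$, whereas the paper tests against $\partial_l^xG_{a+\eps V}(x_\ieps,\cdot)$, and the difference matters. With your choice the left side is not zero but contains $\eps\int_\Omega Vu_\eps\,\partial_l^xG_a(x_\ieps,\cdot)\diff x$. After removing the radially symmetric pieces $B_\ieps$ and $\mu_\ieps^2W_\ieps$ against the antisymmetric kernel $\frac{x_\ieps-y}{|x_\ieps-y|^3}$, one is left with $\eps\int_{\mathsf{b}_\ieps}|y-x_\ieps|^{-2}|q_\ieps|\diff y\lesssim\eps\mu_\eps^{1/2-\nu}$ by \eqref{q eps bound theorem}, i.e.\ $\mathcal O(\eps\mu_\eps^{-\nu})$ after dividing by $\mu_\oeps^{1/2}$ (a more careful splitting still only yields $\mathcal O(\eps\log(1/\mu_\eps))$). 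This is not $\mathcal O(\eps+\mu_\eps^\nu)$ unless one already knows a relation between $\eps$ and $\mu_\eps$ --- which is precisely what the proposition is meant to establish. The paper's fix is to use $G_{a+\eps V}$ so that the entire left side collapses to $\partial_lu_\eps(x_\ieps)=0$, at the harmless cost of replacing $\nabla\phi_{a+\eps V}(x_\ieps)$ by $\nabla\phi_a(x_\ieps)+\mathcal O(\eps)$ on the right via the resolvent identity (Lemma \ref{lemma nabla phi a + eps V}). Apart from this choice of test function, your outline of the second identity (antisymmetry killing the leading bubble term, the factor $2$ coming from $\nabla\phi_a=2\nabla_xH_a$ on the diagonal, and the radial symmetry of $W_\ieps$ disposing of the dangerous first-order correction) matches the paper.
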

 
Before giving the proof of Proposition \ref{proposition expansions}, we observe the following property of the function $\widetilde{\mathcal G}$ defined at the beginning of this section. 

\begin{lemma}
\label{lemma tilde G}
As $\eps \to 0$, we have  $\mu_{1,\eps}^{-1/2} u_{\eps} \to   \widetilde{\mathcal G}$ uniformly away from $\{x_{1,0},...,x_{n,0}\}$.
\end{lemma}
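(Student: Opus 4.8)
The plan is to prove Lemma \ref{lemma tilde G} by representing $u_\eps$ through the Green's function $G_a$ and passing to the limit, using the a priori bounds already established. Since $-\Delta u_\eps + a u_\eps = u_\eps^5 - \eps V u_\eps$ with zero boundary data, we have the exact representation
\[
u_\eps(x) = \int_\Omega G_a(x,y)\bigl(u_\eps(y)^5 - \eps V(y) u_\eps(y)\bigr)\diff y .
\]
First I would multiply by $\mu_{1,\eps}^{-1/2}$ and show that the term involving $\eps V u_\eps$ is negligible: by Proposition \ref{proposition preliminaries multi}(v) one has $u_\eps \lesssim \sum_i B_\ieps$, so $\mu_{1,\eps}^{-1/2}\eps\int_\Omega G_a(x,y) V(y) u_\eps(y)\diff y = O(\eps \mu_\eps^{1/2}|\log\mu_\eps|) \to 0$ uniformly in $x$ (the integral of $G_a(x,\cdot)$ against $B_\ieps$ is easily seen to be $O(\mu_\ieps^{1/2})$ up to logarithmic factors, and crucially $x$ stays away from the singularity). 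The real content is therefore the convergence of $\mu_{1,\eps}^{-1/2}\int_\Omega G_a(x,y) u_\eps(y)^5 \diff y$ to $\widetilde{\mathcal G}(x) = 4\pi\sqrt 3\sum_i \lambda_{i,0} G_a(x,x_i)$ uniformly for $x$ away from $\{x_{1,0},\dots,x_{n,0}\}$.

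The key step is a concentration/mass-computation for the bubble integrals. I would split $\Omega = \bigcup_i \mathsf{b}_\ieps \cup \bigl(\Omega\setminus\bigcup_i \mathsf{b}_\ieps\bigr)$. On the complement of the balls $u_\eps \lesssim \sum_i B_\ieps \lesssim \mu_\eps^{1/2}$, so $u_\eps^5 \lesssim \mu_\eps^{5/2}$ there and this contributes $O(\mu_\eps^{5/2})\cdot\mu_{1,\eps}^{-1/2} = O(\mu_\eps^2) \to 0$. On each ball $\mathsf{b}_\ieps$, since $x$ is bounded away from $x_{i,0}$ the kernel $G_a(x,\cdot)$ is continuous near $x_\ieps$, so $\int_{\mathsf{b}_\ieps} G_a(x,y) u_\eps(y)^5\diff y = \bigl(G_a(x,x_\ieps) + o(1)\bigr)\int_{\mathsf{b}_\ieps} u_\eps(y)^5 \diff y$, the $o(1)$ being uniform in $x$ away from the concentration set. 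It remains to evaluate $\int_{\mathsf{b}_\ieps} u_\eps^5 \diff y$. Using $u_\eps = B_\ieps + r_\ieps$ with $|r_\ieps| \lesssim \mu_\eps^{1/2-\vartheta}|y - x_\ieps|^{1+\vartheta}$ from Theorem \ref{theorem asmyptotic expansion additive} (or just Proposition \ref{proposition preliminaries multi}(iv)–(v)), and the change of variables $y = x_\ieps + \mu_\ieps z$, one gets $\int_{\mathsf{b}_\ieps} u_\eps^5\diff y = \mu_\ieps^{1/2}\bigl(\int_{\R^3} B^5 \diff z + o(1)\bigr)$; the integral $\int_{\R^3} B^5$ is a finite explicit constant, and $\int_{\R^3} B^5 = 4\pi\sqrt 3$ is exactly what is needed (this is because $-\Delta B_{\mu,x_0} = B_{\mu,x_0}^5$ integrates against a test function to $\int B_{\mu,x_0}^5$, and the normalization of $B$ gives the constant $4\pi\sqrt 3$ when matched with the $\frac{1}{4\pi|x-y|}$ singularity of $G_a$; alternatively it is a direct computation). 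Then $\mu_{1,\eps}^{-1/2}\int_{\mathsf{b}_\ieps} u_\eps^5 \diff y \to (\mu_\ieps/\mu_{1,\eps})^{1/2}\cdot 4\pi\sqrt 3 = 4\pi\sqrt 3\,\lambda_{i,0}$ by Proposition \ref{proposition preliminaries multi}(iii). Summing over $i$ gives the claim.

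The main obstacle I anticipate is making all error estimates genuinely uniform in $x$ over the region $\{\dist(x,\{x_{1,0},\dots,x_{n,0}\})\geq \delta\}$ — in particular the replacement of $G_a(x,y)$ by $G_a(x,x_\ieps)$ on $\mathsf{b}_\ieps$ requires a modulus-of-continuity estimate for $G_a(x,\cdot)$ that is uniform for such $x$, which follows from standard elliptic estimates since $G_a$ is smooth off the diagonal and the diagonal is avoided. One should also be a little careful that the ball radii $\delta_0$ and the cutoff $\delta$ are chosen compatibly (shrink $\delta_0$ if necessary so that the balls $\mathsf{b}_\ieps$ stay at positive distance from the region where we claim convergence); this is harmless. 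No genuinely hard analysis is involved — the statement is essentially the classical "bubbles carry mass $4\pi\sqrt 3$ each, weighted by $\lambda_{i,0}$, and the Green representation does the rest," and everything is powered by the pointwise bounds from Proposition \ref{proposition preliminaries multi} and Theorem \ref{theorem asmyptotic expansion additive}.
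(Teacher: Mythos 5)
Your proposal is correct and follows essentially the same route as the paper: the Green's function representation of $u_\eps$, the splitting into the balls $\mathsf{b}_\ieps$ and their complement, the bubble-mass computation $\int_{\R^3}B^5 = 4\pi\sqrt 3$ together with $(\mu_\ieps/\mu_{1,\eps})^{1/2}\to\lambda_{i,0}$, and the $\mathcal O(\eps\mu_\eps^{1/2})$ bound on the $\eps V u_\eps$ term. The only cosmetic difference is that the paper invokes the $r_\ieps$ bound from Theorem \ref{theorem asmyptotic expansion additive} where you correctly note that Proposition \ref{proposition preliminaries multi}(iv)--(v) plus dominated convergence would already suffice.
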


\begin{remark}
\label{remark G = tilde G}
Note that $\widetilde{\mathcal G}$ is defined in terms of the $\lambda_{i,0}$ from Proposition \ref{proposition preliminaries multi}, while the function $\mathcal G$ appearing in Theorem \ref{theorem multibubble} is defined in terms of the eigenvector $\bm{\Lambda}_0$. We shall however prove in Lemma \ref{lemma perron frobenius} below that $\bm{\lambda}_0 = \bm{\Lambda}_0$ and hence in fact $\widetilde{\mathcal G} = \mathcal G$.
\end{remark}

\begin{proof}
[Proof of Lemma \ref{lemma tilde G}]
By applying $(-\Delta + a)^{-1}$ to the equation satisfied by $u_\eps$, we obtain
\[ u_\eps (x) = \int_\Omega G_a(x,y) (u_\eps^5(y) + \eps V(y) u(y)) \diff y \]
for every $x \in \Omega$. By developing $u_\eps = B_\ieps + r_\ieps$ near $x_\ieps$ and using $|r_\ieps(x)| \lesssim \mu_\eps^{1/2 -\vartheta}|x-x_\ieps|^{1+\vartheta}$ for every $\vartheta \in (0,1)$ by Theorem \ref{theorem asmyptotic expansion additive}, we get 
\begin{align*}
\int_\Omega G_a(x,y) u_\eps^5(y) \diff y &= \sum_i \left( \int_{\mathsf b_\ieps} G_a(x,y) u_\eps^5(y) \diff y \right) + \int_{\Omega \setminus \bigcup_i \mathsf{b}_\ieps} G_a(x,y) u_\eps^5(y) \diff y \\
&=  \sum_i \mu_\ieps^{1/2} (\int_{\R^3} B^5 \diff y) G_a(x, x_\ieps) + o(\mu_\eps^{1/2}) 
\end{align*}
uniformly for $x$ in compacts of $\Omega \setminus \{x_{1,0},...,x_{n,0}\}$.
Moreover, the bound $u_\eps \lesssim B_\ieps$ near $x_\ieps$ from Proposition \ref{proposition expansions} easily gives 
\[ \eps  \int_\Omega G_a(x,y)  V(y) u(y)\diff y  =  \mathcal O(\eps \mu_\eps^{1/2}) = o(\mu_\eps^{1/2}), \]
uniformly for $x \in \Omega$. 

Since $\int_{\R^3} B^5 \diff y = 4 \pi \sqrt 3$, combining all of the above, dividing by $\mu_\oeps^{1/2}$ and recalling the definition of $\lambda_{i,0}$ gives the conclusion.  
\end{proof}

\begin{proof}
[Proof of Proposition \ref{proposition expansions}]

\textit{Proof of \eqref{expansion G a line i}.   }
Integrate equation \eqref{brezis peletier additive} for $u_\eps$ against $G_a(x_\ieps, \cdot)$ to get 
\begin{equation}
\label{integral identity prop proof}
\int_\Omega (-\Delta + a) u_\eps G_a(x, x_\ieps) \diff x  + \eps  \int_\Omega V u_\eps G_a(x_\ieps, \cdot) \diff x = \int_\Omega u_\eps^5 G_a(x_\ieps, \cdot) \diff x 
\end{equation} 
Then by the definition of $G_a(x_\ieps, \cdot)$ and by the convergence $\mu_\oeps^{-1/2} u_\eps \to  \widetilde{ \mathcal G}$ from Lemma \ref{lemma tilde G}, the left side of \eqref{integral identity prop proof} equals
\begin{align*}
 u_\eps(x_\ieps) + \eps  \mu_\eps^{1/2} \lambda_\ieps^{-1}  (Q_V(x_\ieps) + o(1)) = \mu_\ieps^{-1/2} + \eps  \mu_\eps^{1/2} \lambda_\ieps^{-1}  (Q_V(x_\ieps) + o(1)).
\end{align*}
Evaluating the right side of \eqref{integral identity prop proof} requires some more care. We start by writing
\begin{align*}
\int_\Omega u_\eps^5 G_a(x_\ieps, \cdot) \diff x = \sum_j \int_{\mathsf{b}_\jeps}  u_\eps^5 G_a(x_\ieps, \cdot) \diff x + \int_{\Omega \setminus \bigcup_j \mathsf{b}_\jeps}   u_\eps^5 G_a(x_\ieps, \cdot) \diff x. 
\end{align*}
Clearly, on $\Omega \setminus \bigcup_i \mathsf{b}_\ieps$ we have $u_\eps \lesssim \sum B_\ieps \lesssim \mu_\eps^{1/2}$, and so 
\[  \int_{\Omega \setminus \bigcup_j \mathsf{b}_\jeps}   u_\eps^5 G_a(x_\ieps, \cdot) \diff x \lesssim \mu_\eps^{5/2} = o(\mu_\eps^{3/2}). \]
It therefore remains to evaluate the integral over the balls $\mathsf{b}_\jeps$, up to $o(\mu_\eps^{3/2} + \eps \mu_\eps^{1/2})$ precision. We shall consider the cases $j = i$ and $j \neq i$ separately. 

\textit{Case $j = i$.  }

Careful, but straightforward computations and estimates give 
\[ \int_{\mathsf{b}_\ieps} B_\ieps^5 G_a(x_\ieps, \cdot) \diff x = \mu_\ieps^{-1/2} - 4 \pi \sqrt 3 \phi_a(x_\ieps) \mu_\ieps^{1/2} + 3  a(x_\ieps) \mu_\ieps^{3/2} + o(\mu_\eps^{3/2}), \]
see Lemma \ref{lemma int B^5 Ga} below.

To evaluate the error term on the right side, we need the full precision of the asymptotic expansion of $u_\eps$ derived in Theorem \ref{theorem asmyptotic expansion additive}. By that theorem, on $\mathsf{b}_\ieps$ we may write 
\[ u_\eps^5 - B_\ieps^5  =  5 \mu_\ieps^2 W_\ieps B_\ieps^4 +  \mathcal{O}(|q_\ieps|B_\ieps^4 + r_\ieps^2 B_\ieps^3),  \]  
with the remainders $r_\ieps$ and $q_\ieps$ satisfying the bounds $|r_\ieps| \lesssim \mu_\eps^{\frac 12  - \vartheta} |x-x_\ieps|^{1+\vartheta}$ and $|q_\ieps| \lesssim \mu_\eps^{\frac 12 - \nu} |x-x_\ieps|^{2+ \nu}$ on ${\mathsf{b}_\ieps}$ respectively (with $0 < \nu, \vartheta < 1$). Thus we get  
\begin{align*}
& \quad  \int_{\mathsf{b}_\ieps} \left(|r_\ieps| H_a(x_\ieps, \cdot) + \frac{q_\ieps}{|x-x_\ieps|} \right) B_\ieps^4 \diff x +  \int_{\mathsf{b}_\ieps} |u_\eps - B_\ieps|^2 B_\ieps^3 G_a(x_\ieps, \cdot) \diff x \\
&\lesssim \mu_\eps^{\frac{5}{2} - \vartheta} + \mu_\eps^{\frac{5}{2} - \nu} + \mu_\eps^{\frac{5}{2}- 2 \vartheta} = o(\mu_\eps^{3/2}). 
\end{align*}

Thus 
\begin{align}
 \int_{\mathsf{b}_\ieps} (u_\eps^5 - B_\ieps^5) G_a(x_\ieps, \cdot) \diff x   &= \frac{5}{4 \pi} \mu_\ieps^{2} \int_{\mathsf{b}_\ieps} W_\ieps |x-x_\ieps|^{-1} B_\ieps^4 \diff x + o(\mu_\eps^{3/2}). \label{ref full estimate}
\end{align}
Recall that $W_\ieps(x) = \sum_{j,k} c_{jk,\eps} W_{jk}(\frac{x-x_\ieps}{\mu_\ieps})$ with $c_{jk, \eps} = \partial_{jk}(u_\eps - B_\ieps)(x_\ieps)$. Now if $j \neq k$, then $W_{jk}(x) = f(x) Y_2(x/|x|)$ for some spherical harmonic $Y_2$ of degree 2. Hence its integral over the ball $\mathsf{b}_\ieps$ against the radial function $|x-x_\ieps|^{-1} B_\ieps^4$ vanishes. Thus only the terms with $j = k$ remain, and for those we have 
\begin{align*}
 \sum_j W_{jj}\left(\frac{x - x_\ieps}{\mu_\ieps}\right) &= W\left(\frac{x - x_\ieps}{\mu_\ieps}\right) \Delta (u_\eps - B_\ieps)(x_\ieps) \\
 &= \left(a(x_\ieps) + \eps V(x_\ieps)\right) \mu_\ieps^{-\frac{1}{2}} W\left(\frac{x - x_\ieps}{\mu_\ieps}\right),
\end{align*} 
where $W$ is the function from Lemma \ref{lemma integral w0}. 
It follows that 
\begin{align*}
&\quad  \int_{\mathsf{b}_\ieps} (u_\eps^5 - B_\ieps^5) G_a(x_\ieps, \cdot) \diff x \\
&= \mu_\ieps^{3/2} \frac{5}{ 4 \pi} (a(x_\ieps) + \eps V(x_\ieps)) \int_{\R^3} \frac{W(x) B(x)^4}{|x|} \diff x + o(\mu_\eps^{3/2})  \\
&= 3 a(x_\ieps) (\pi - 1) \mu_\ieps^{3/2} + o(\mu_\eps^{3/2}),
\end{align*}
where the final identity is computed in Lemma \ref{lemma integral w0}. Collecting all expansions, we conclude the proof of \eqref{expansion G a line i}. 

\textit{Case $j \neq i$.  } The expansion of the cross terms with $j \neq i$ is simpler because $G_a(x_\ieps, \cdot)$ is bounded on $\mathsf{b}_\jeps$ in that case, and we give a bit fewer details.  Using the bound \eqref{r eps bound theorem} on $r_\ieps$ from Theorem \ref{theorem asmyptotic expansion additive} we write 
\begin{align*}
&\qquad \int_{\mathsf{b}_\jeps}  u_\eps^5 G_a(x_\ieps, \cdot) \diff x \\
&= \int_{\mathsf{b}_\jeps}  B_\jeps^5 G_a(x_\ieps, \cdot) \diff x + \mathcal O\left( \mu_\eps^{1/2 - \vartheta} \int_{\mathsf{b}_\jeps}  B_\jeps^4 |x - x_\jeps|^{1 + \vartheta}  \diff x \right) \\
&= 4 \pi \sqrt 3 \mu_\jeps^{1/2} G_a(x_\ieps, x_\jeps) + o(\mu_\eps^{3/2}).  
\end{align*} 
In summary, inserting everything into \eqref{integral identity prop proof}, we have proved
\begin{align*}
&\qquad \mu_\ieps^{-1/2} + \eps  \mu_\ieps^{1/2} \lambda_\ieps^{-1}  (Q_V(x_\ieps) + o(1))\\
& = 4 \pi \sqrt{3} \left( -\phi_a(x_\ieps) + G_a(x_\ieps, x_\jeps) \right) + 3 (a(x_\ieps) +o(1)) \mu_\ieps^{3/2}.  
\end{align*} 
Dividing by $\mu_\oeps^{1/2}$ and recalling the definitions of $\lambda_\ieps$ and $M_a(\bm{x}_\eps)$, we obtain \eqref{expansion G a line i}.

\textit{Proof of \eqref{expansion nabla G a line i}.   }

We proceed similarly to the proof of expansion \eqref{expansion G a line i} and multiply equation \eqref{brezis peletier additive} for $u_\eps$ against $\nabla_x G_{a+\eps V}(x_\ieps, \cdot)$.

 Then 
\begin{align*}
&\qquad \int_\Omega (-\Delta + a + \eps V) u_\eps \nabla_x G_{a + \eps V}(x_\ieps, \cdot) \diff y \\
 &= \nabla_x \int_\Omega (-\Delta + a + \eps V) u_\eps  G_{a + \eps V}(x_\ieps, \cdot) \diff y \\
&= \nabla u_\eps(x_\ieps) = 0. 
\end{align*}

The right side of equation \eqref{brezis peletier additive} integrated against $\nabla_x G_{a + \eps V}(x_\ieps, \cdot)$ can be written as 
\begin{align*}
&\qquad \int_\Omega u_\eps^5 \nabla_x G_{a+\eps V}(x_\ieps, \cdot) \diff x \\
& = \sum_j \left( \int_{\mathsf{b}_\jeps}  u_\eps^5 \nabla_x G_a(x_\ieps, \cdot) \diff x \right) +  \int_{\Omega \setminus \bigcup_j \mathsf{b}_\jeps}   u_\eps^5 \nabla_x G_a(x_\ieps, \cdot) \diff x. 
\end{align*}
The last term on the right side, since $u_\eps  \lesssim \sum_k B_\keps \lesssim \mu_\eps^{1/2}$ on $\Omega \setminus \bigcup_j \mathsf{b}_\jeps$, is bounded by 
\[ \int_{\Omega \setminus \bigcup_j \mathsf{b}_\jeps}   u_\eps^5 \nabla_x G_a(x_\ieps, \cdot) \diff x \lesssim \mu_\eps^{5/2}. \]
It remains to evaluate the integral over the balls $\mathsf{b}_\jeps$. We again consider the cases $j = i$ and $j \neq i$ separately. 

\textit{Case $j = i$. }
We write
\begin{align}
&\qquad \int_{\mathsf{b}_\ieps} u_\eps^5 \nabla_x G_{a+\eps V}(x_\ieps, \cdot) \diff y \\
&= \int_{\mathsf{b}_\ieps} B_\ieps^5 \nabla_x G_{a+\eps V}(x_\ieps, \cdot) \diff x + 5 \int_{\mathsf{b}_\ieps} r_\ieps B_\ieps^4 \nabla_x G_{a+\eps V}(x_\ieps, \cdot) \diff x \nonumber \\
&\qquad + \mathcal O\left( \int_{\mathsf{b}_\ieps} r_\ieps^2 \frac{B_\ieps^3}{|x-x_\ieps|^2} \diff x  \right). \label{bigO proof}
\end{align}
Let us treat the terms on the right side one by one. The first term, by explicit computations carried out in Lemma \ref{lemma int B^5 nabla Ga}, is 
\begin{align*} \int_{\mathsf{b}_\ieps} B_\ieps^5 \nabla_x G_{a + \eps V}(x_\ieps,\cdot) \diff y &=  - 2 \pi \sqrt 3 \nabla \phi_{a + \eps V}(x_\ieps) \mu_\ieps^{1/2} + o(\mu_\eps^{\frac 12  + \nu}) \\
&= - 2 \pi \sqrt 3 \nabla \phi_{a}(x_\ieps) \mu_\ieps^{1/2} + \mathcal O(\eps \mu_\eps^{1/2}) + o(\mu_\eps^{\frac 12  + \nu})
\end{align*}
for every $0 < \nu < 1$. The last equality comes from the fact that by the resolvent formula, 
\begin{align*}
\nabla \phi_{a + \eps V}(x_\ieps) &= \nabla \phi_a(x_\ieps) + \eps \nabla_x  \int_\Omega G_a(x, y) V(y) G_{a + \eps V}(x, y) \diff y \, \Big|_{x = x_\ieps} \\
&= \nabla \phi_a(x_\ieps) + \mathcal O(\eps), 
\end{align*} 
see Lemma \ref{lemma nabla phi a + eps V} for details.

To estimate the last term on the right side of \eqref{bigO proof}, the bound \eqref{r eps bound theorem} on $r_\ieps$ from Theorem \ref{theorem asmyptotic expansion additive} gives
\begin{align*}
&\qquad  \int_{\mathsf{b}_\ieps} r_\ieps^2 \frac{B_\ieps^3}{|x-x_\ieps|^2} \diff x  \lesssim \mu_\eps^{1 - 2 \vartheta} \int_\Omega B_\ieps^3 |x-x_\ieps|^{2 \vartheta} \diff x 
 \lesssim \mu_\eps^{\frac{5}{2} - 2 \vartheta} . 
\end{align*}

Finally, let us show that the second term on the right side of \eqref{bigO proof} is negligible. Here is where we use the full strength of Theorem \ref{theorem asmyptotic expansion additive}, i.e. the bound \eqref{q eps bound theorem} on $q_\ieps$, once more after using it to get \eqref{ref full estimate} in the proof of \eqref{expansion G a line i}. We write
\begin{align*}
& \quad \int_{\mathsf{b}_\ieps} r_\ieps B_\ieps^4 \nabla_y G_{a+\eps V}(x_\ieps, \cdot) \diff y \\
& = - \frac{1}{4\pi} \int_{\mathsf{b}_\ieps} (\mu_\ieps^2 W_\ieps + q_\ieps) B_\eps^4 \frac{x_\eps - x}{|x-x_\eps|^3} \diff x + \int_{\mathsf{b}_\ieps} (u_\eps - B_\eps)B_\eps^4 \nabla_x H_{a+\eps V}(x_\eps, \cdot) \diff x. 
\end{align*} 
Recall that $W_\ieps(x) = \sum_{j,k} c_{jk,\eps} W_{jk}(\frac{x-x_\ieps}{\mu_\ieps})$, where all the functions $W_{jk}$ are of the form $W_{jk}(x) = f(|x|) Y(x/|x|)$ for some $Y$ which is a sum of spherical harmonics of degree $0$ and $2$. Since on the other hand the vector $\frac{x_\ieps - x}{|x_\ieps - x|}$ is made of spherical harmonics of degree $1$, the integral of $W_\ieps$ against $B_\ieps^4 \frac{x_\ieps - x}{|x-x_\ieps|^3}$ vanishes. 
Moreover, by \eqref{q eps bound theorem},
\begin{align*}
\int_{\mathsf{b}_\ieps} |q_\ieps|  B_\ieps^4 \frac{x_\ieps - x}{|x-x_\ieps|^3} \diff x \lesssim \mu_\eps^{\frac 12 - \nu} \int_{\R^3} |x - x_\ieps|^{ \nu} B_\ieps^4 \diff x \lesssim  \mu_\eps^{3/2}. 
\end{align*} 

To control the remaining term, by \eqref{r eps bound theorem} and since $\nabla_x H_{a+\eps V}(x_\ieps, \cdot)$ is uniformly bounded on $\Omega$, we have
\begin{align*}
\left|  \int_{\mathsf{b}_\ieps} (u_\eps - B_\ieps) B_\ieps^4 \nabla_x H_{a+\eps V}(x_\ieps, \cdot) \diff x \right| &\lesssim \mu_\eps^{\frac{1}{2} - \vartheta} \int_\Omega B_\ieps^4 |x-x_\ieps|^{1+ \vartheta} \diff x \lesssim \mu_\eps^{\frac 52 -\vartheta} . 
\end{align*}

\textit{Case $j \neq i$.  }

Again, this case is less involved and we will be briefer. We have, using the bound on $|u_\eps - B_\jeps|$ from Proposition \ref{proposition r eps}, 
\begin{align*}
& \qquad \int_{\mathsf{b}_\jeps} u_\eps^5 \nabla_x G_{a+\eps V}(x_\ieps, \cdot) \diff x \\
 &= \int_{\mathsf{b}_\jeps} B_\jeps^5 \nabla_x G_{a+\eps V}(x_\ieps, \cdot) \diff x  \\
 & \qquad + \mathcal O\left( \mu_\eps^{\frac 12 - \vartheta} \int_{\mathsf{b}_\jeps}  B_\jeps^4 |x-x_\jeps|^{1 + \vartheta} \nabla_x G_{a+\eps V}(x_\ieps, \cdot) \diff x \right) \\
 &= 4 \pi \sqrt 3 \mu_\jeps^{1/2} \nabla_x G_a(x_\ieps, x_\jeps) + o(\mu_\eps^{3/2}). 
\end{align*}
Now collecting all the estimates, choosing $\vartheta > 0$ small enough and dividing by $\mu_\oeps^{1/2}$ gives \eqref{expansion nabla G a line i}
\end{proof}

\begin{remark}
In the preceding proof, the choice of multiplying against $G_{a + \eps V}$ rather than $G_a$ is made on technical grounds. Indeed, by doing so, one does not need to bound $\int \eps V \nabla G_a u_\eps  \lesssim \eps \mu_\eps^{1/2}$ on the left side, but instead needs to bound $\nabla \phi_{a+ \eps V}(x_\ieps) - \nabla \phi_a(x_\ieps)$ on the right side. The latter can be better handled with the bounds we have so far. In fact, plugging in the (slightly non-optimal) bounds on $r_\ieps$ (or $q_\ieps$) into the first term leads to an estimate of the type $\int \eps V \nabla G_a u_\eps \lesssim \eps \mu_\eps^{1/2} \lesssim \eps^2 \mu_\eps^{1/2 - \nu}$. To get the desired bound, we would need $\eps \lesssim \mu_\eps^\nu$, which is unclear at this stage. For the bound $\nabla \phi_{a+ \eps V}(x_\ieps) - \nabla \phi_a(x_\ieps)$, proved in Lemma \ref{lemma nabla phi a + eps V}, a similar issue does not arise.
\end{remark}

\begin{remark}
\label{remark weaker bound on q}
Arguing as in Section \ref{section asymp analysis additive}, one can deduce, for all $0 < \nu < 1$, the bound $|q_\ieps(x)| \lesssim \mu_\eps^{1/2} |x - x_\ieps|^{1 + \nu}$ on $\mathsf{b}_\ieps$, which is weaker than \eqref{q eps bound theorem} near $x_\ieps$. This bound can however be checked to be just enough to appropriately bound the terms in $q_\ieps$ in the previous proof of Proposition \ref{proposition expansions}. 
\end{remark}

\subsection{Properties of the matrix $M_a(\bm{x})$ and the eigenvector $\rho_a(\bm{x})$}

For every $\bm{x} \in \Omega_\ast^n$, recall that we denote by $\rho_a(\bm{x})$ the lowest eigenvalue of the matrix $M_a(\bm{x})$. 

Moreover, denote by $\bm{x}_0 = (x_{1,0},...,x_{n,0})$ and $\bm{\lambda}_0$ the limit points of  $\bm{x}_\eps$ and $\bm{\lambda}_\eps$ respectively. (The vector $\bm{\lambda}_\eps$ is defined in Proposition \ref{proposition preliminaries multi}.) 

\begin{lemma}
\label{lemma perron frobenius}
\begin{enumerate}[(i)]
\item \label{item perron x} For every $\bm{x} \in \Omega_\ast^n$, $\rho_a(\bm{x})$ is a simple eigenvalue. The associated eigenvector can be chosen so that all of its entries are strictly positive. All other eigenvectors of $M_a(\bm{x})$ have both strictly negative and strictly positive entries. 
\item \label{item perron x0} For $\bm{x} = \bm{x}_0$, we have $\rho_a(\bm{x}_0) = 0$ with eigenvector $\bm{\Lambda} (\bm{x}_0) = \bm{\lambda}_0$. Moreover, $\nabla \rho_a(\bm{x}_0) = 0$. 
\end{enumerate}
\end{lemma}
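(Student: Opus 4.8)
For part \eqref{item perron x}, the plan is to bring $M_a(\bm x)$ into a form where the classical Perron--Frobenius theory applies. The matrix $M_a(\bm x)$ is symmetric, and its off-diagonal entries $m_{ij} = -G_a(x_i,x_j)$ are strictly negative since $G_a > 0$ on $\Omega \times \Omega$ by coercivity. Hence, for a large enough constant $c > 0$, the matrix $cI - M_a(\bm x)$ has all entries strictly positive: the off-diagonal entries are $G_a(x_i,x_j) > 0$, and the diagonal entries $c - \phi_a(x_i)$ are positive for $c$ large. By the Perron--Frobenius theorem, $cI - M_a(\bm x)$ has a simple largest eigenvalue whose eigenvector has strictly positive entries, and all other eigenvectors have components of mixed sign. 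Translating back, $M_a(\bm x) = cI - (cI - M_a(\bm x))$ has a simple \emph{smallest} eigenvalue $\rho_a(\bm x) = c - (\text{Perron eigenvalue})$ with the same (strictly positive) eigenvector, and all other eigenvectors have mixed-sign entries. This is exactly the claim; I would just remark that the choice of $c$ and the resulting eigenvalue do not depend on $c$, so $\rho_a(\bm x)$ is well-defined as ``the'' lowest eigenvalue.

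For part \eqref{item perron x0}, the idea is to extract the leading behaviour from the expansions in Proposition \ref{proposition expansions}. Start from \eqref{expansion G a line i}:
\[
\eps\,(Q_V(x_\ieps) + o(1)) = -4\pi\sqrt{3}\,(M_a(\bm x_\eps)\cdot\bm\lambda_\eps)_i + 3\pi\,(a(x_\ieps)+o(1))\,\lambda_\ieps\,\mu_\ieps.
\]
Both $\eps$ and $\mu_\ieps$ tend to $0$, and $\bm x_\eps \to \bm x_0$, $\bm\lambda_\eps \to \bm\lambda_0$ with $(\bm\lambda_0)_i = \lambda_{i,0} \in (0,\infty)$ by Proposition \ref{proposition preliminaries multi}; since $M_a$ is continuous on $\Omega^n_\ast$, passing to the limit $\eps \to 0$ gives $(M_a(\bm x_0)\cdot \bm\lambda_0)_i = 0$ for every $i$, i.e. $M_a(\bm x_0)\,\bm\lambda_0 = 0$. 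Because $\bm\lambda_0$ has strictly positive entries, part \eqref{item perron x} forces $\bm\lambda_0$ to be (a multiple of) the Perron eigenvector and the associated eigenvalue to be $\rho_a(\bm x_0)$; hence $\rho_a(\bm x_0) = 0$ and, after normalizing the first component to $1$, $\bm\Lambda(\bm x_0) = \bm\lambda_0$. For $\nabla\rho_a(\bm x_0) = 0$: since $\rho_a$ is the lowest eigenvalue of a symmetric matrix and is \emph{simple} at $\bm x_0$, it is differentiable near $\bm x_0$ (standard perturbation theory, using the regularity of $G_a$ and $\phi_a$), with
\[
\partial_l^{x_i}\rho_a(\bm x) = \frac{\bm\Lambda(\bm x)^\top\,(\partial_l^{x_i} M_a(\bm x))\,\bm\Lambda(\bm x)}{|\bm\Lambda(\bm x)|^2}.
\]
Comparing $\partial_l^{x_i} M_a$ with the matrix $\tilde M_a^l$ of \eqref{m ij tilde definition} and recalling $m_{ij} = -G_a(x_i,x_j)$ is symmetric in its two arguments, the numerator at $\bm x_0$ is, up to the normalization, precisely (a component of) $\bm\lambda_0^\top \tilde M_a^l(\bm x_0) \bm\lambda_0$, which by \eqref{expansion nabla G a line i} equals $\mathcal O(\eps + \mu_\eps^\nu) \to 0$. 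Hence $\nabla\rho_a(\bm x_0) = 0$.

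The main obstacle, I expect, is the bookkeeping in the last step: one must check that the quadratic form $\bm\lambda^\top (\partial_l^{x_i} M_a) \bm\lambda$ built from the Hellmann--Feynman formula coincides with the $i$-th entry-pattern encoded in $\tilde M_a^l$ (note the factor $2$ on the off-diagonal entries of $\tilde M_a^l$ precisely accounts for the two occurrences $m_{ij}$ and $m_{ji}$ when differentiating with respect to $x_i$), and that the error term in \eqref{expansion nabla G a line i} indeed vanishes in the limit for some admissible $\nu$. The Perron--Frobenius argument and the limit in \eqref{expansion G a line i} are routine; differentiability of a simple eigenvalue is standard, so the only real care needed is matching the combinatorics of the derivative-of-matrix expression against the definition of $\tilde M_a^l$.
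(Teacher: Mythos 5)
Your proof is correct, and parts \eqref{item perron x} and the identity $M_a(\bm{x}_0)\bm{\lambda}_0 = 0$ are obtained exactly as in the paper (the paper simply cites the Perron--Frobenius argument of Bahri's appendix rather than spelling out the shift $cI - M_a$, but the content is the same). Where you genuinely diverge is the proof of $\nabla\rho_a(\bm{x}_0)=0$: you pass to the limit $\eps\to 0$ in \eqref{expansion nabla G a line i} first, obtaining $(\tilde M_a^l(\bm{x}_0)\cdot\bm{\lambda}_0)_i=0$ by continuity, and then apply the Hellmann--Feynman formula at $\bm{x}_0$, where $\bm{\lambda}_0$ \emph{is} the exact eigenvector; your bookkeeping of the factor $2$ in \eqref{m ij tilde definition} against the two symmetric occurrences of $G_a(x_i,x_j)$ is right. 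The paper instead works at the finite-$\eps$ point $\bm{x}_\eps$, where $\bm{\lambda}_\eps$ is not the eigenvector of $M_a(\bm{x}_\eps)$, and therefore has to decompose $\bm{\lambda}_\eps = \bm{\sigma}_\eps(\bm{x})+\bm{\delta}_\eps(\bm{x})$ along and orthogonal to $\bm{\Lambda}(\bm{x})$, producing the quantitative a priori bound $|\nabla\rho_a(\bm{x}_\eps)| = \mathcal O(\eps+\mu_\eps^\nu+|\rho_a(\bm{x}_\eps)|+|\bm{\delta}_\eps(\bm{x}_\eps)|)$, i.e. \eqref{nabla rho a a priori bound}. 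Your route is cleaner for the lemma as stated, but note that \eqref{nabla rho a a priori bound} is not merely a device: it is reused verbatim in Lemma \ref{lemma rho bounds quantitative} to prove \eqref{nabla rho lesssim eps nu}, which is essential for the blow-up rate in part \eqref{item blowup} of Theorem \ref{theorem multibubble}. So if one adopted your argument in the paper, the quantitative estimate at $\bm{x}_\eps$ would still have to be derived separately; both arguments rely in the same way on the $C^1$-dependence of the simple eigenvalue $\rho_a$ on $\bm{x}$.
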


In the following, for $\eps \geq 0$ let us abbreviate $\bm{\Lambda_\eps} := \bm{\Lambda}(\bm{x}_\eps)$. 

\begin{proof}
Assertion \eqref{item perron x} follows by the Perron--Frobenius argument detailed for the case $a = 0$ in \cite[Appendix A]{Bahri1995}. This argument still applies because it only relies on the strict negativity of all off-diagonal entries $-G_a(x_i,x_j)$, which is fulfilled in our case. 

For assertion \eqref{item perron x0}, expansion \eqref{expansion G a line i} plainly gives $M_a(\bm{x}_0) \cdot \bm{\lambda}_0 = 0$ by passing to the limit, hence $\bm{\lambda}_0$ is an eigenvector with eigenvalue $0$. Since  $\bm{\lambda}_0$ has strictly positive entries, it must be the lowest one by part \eqref{item perron x} of the lemma.  

It remains to prove that $\nabla \rho_a(\bm{x}_0) = 0$. To see this, note on the one hand that a direct calculation gives
\[ \partial_l^{x_i} \langle \bm{\lambda}_\eps, M_a(\bm{x}) \cdot \bm{\lambda}_\eps \rangle |_{\bm{x} = \bm{x_\eps}} = \lambda_\ieps (\tilde{M}^l_a(\bm{x}_\eps) \cdot \bm{\lambda}_\eps)_i  \]
for every $l=1,2,3$, $i=1,...,n$ and $\eps > 0$. Hence by expansion \eqref{expansion nabla G a line i}
\[ \partial_l^{x_i} \langle \bm{\lambda}_\eps, M_a(\bm{x}) \cdot \bm{\lambda}_\eps \rangle |_{\bm{x} = \bm{x_\eps}} = \mathcal O(\eps + \mu_\eps^\nu) \]
for every $0 < \nu < 1$. On the other hand, we can evaluate the same quantity as follows: Decompose, for $\bm{x}$ close to $\bm{x}_\eps$, the vector $\bm{\lambda}_\eps = \bm{\sigma}_\eps(\bm{x}) + \bm{\delta}_\eps(\bm{x})$, where $ \bm{\sigma}_\eps(\bm{x}) || \bm{\Lambda}(\bm{x})$ and $\bm{\delta}_\eps(\bm{x}) \perp  \bm{\Lambda}(\bm{x})$. Then we can express
\[ \langle \bm{\lambda}_\eps, M_a(\bm{x}) \cdot \bm{\lambda}_\eps \rangle = \rho_a(\bm{x}) |\bm{\sigma}_\eps(\bm{x})|^2 +  \langle \bm{\delta}_\eps(\bm{x}), M_a(\bm{x}) \cdot \bm{\delta}_\eps(\bm{x}) \rangle . \]
Using the fact that the dependence of all quantities on $\bm{x}$ is $C^1$, we obtain
\[ \partial_l^{x_i} \langle \bm{\lambda}_\eps, M_a(\bm{x}) \cdot \bm{\lambda}_\eps \rangle |_{\bm{x} = \bm{x_\eps}} = \partial_l^{x_i} \rho_a(\bm{x}) + \mathcal O(|\rho_a(\bm{x})|  + | \bm{\delta}_\eps(\bm{x})|).  \]
Combining these estimates, we get 
\begin{equation}
\label{nabla rho a a priori bound}
|\nabla \rho_a(\bm{x}_\eps)| = \mathcal O( \eps + \mu_\eps^\nu + |\rho_a(\bm{x}_\eps)|  + | \bm{\delta}_\eps(\bm{x}_\eps)|). 
\end{equation} 
Now the facts that $\rho_a(\bm{x}_0) = 0$ and $\bm{\lambda}_\eps \to \bm{\lambda}_0$ as $\eps \to 0$ clearly imply $|\rho_a(\bm{x}_\eps)|  + | \bm{\delta}_\eps(\bm{x}_\eps)| = o(1)$, which gives the conclusion.
\end{proof}

In the following, we will decompose the vector $\bm{\lambda}_\eps$ as 
\begin{equation}
\label{lambda = sigma + delta}
\bm{\lambda}_\eps = \bm{\sigma}_\eps + \bm{\delta}_\eps, 
\end{equation} 
where $\bm{\sigma}_\eps || \bm{\Lambda}_\eps$ and $\bm{\delta}_\eps \perp  \bm{\Lambda}_\eps$.  

\begin{lemma}
\label{lemma rho bounds quantitative}

As $\eps \to 0$, we have
\begin{equation}
\label{delta lesssim eps}
| \bm{\delta}_\eps| \lesssim \eps + \mu_\eps + |\rho_a(\bm{x}_\eps)|, 
\end{equation} 
and, for all $0 < \nu < 1$, 
\begin{equation}
\label{nabla rho lesssim eps nu}
|\nabla \rho_a(\bm{x}_\eps)| \lesssim \eps + \mu_\eps^\nu +  |\rho_a(\bm{x}_\eps)|.  
\end{equation} 
\end{lemma}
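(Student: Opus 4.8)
The plan is to establish the two bounds \eqref{delta lesssim eps} and \eqref{nabla rho lesssim eps nu} simultaneously by exploiting the two expansions from Proposition \ref{proposition expansions} together with the spectral decomposition of $M_a(\bm{x}_\eps)$. The starting observation is that \eqref{nabla rho lesssim eps nu} is essentially already contained in \eqref{nabla rho a a priori bound} from the proof of Lemma \ref{lemma perron frobenius}, since that estimate reads $|\nabla \rho_a(\bm{x}_\eps)| = \mathcal O(\eps + \mu_\eps^\nu + |\rho_a(\bm{x}_\eps)| + |\bm{\delta}_\eps|)$; so once \eqref{delta lesssim eps} is proved, \eqref{nabla rho lesssim eps nu} follows immediately by absorbing the $|\bm{\delta}_\eps|$ term into the right-hand side (note $\mu_\eps \lesssim \mu_\eps^\nu$ for $\nu<1$ and small $\mu_\eps$). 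Thus the real work is the proof of \eqref{delta lesssim eps}.

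For \eqref{delta lesssim eps}, I would take the expansion \eqref{expansion G a line i}, which after dividing through (or as stated) says
\[ -4\pi\sqrt3\, (M_a(\bm{x}_\eps)\cdot\bm{\lambda}_\eps)_i = \eps\,(Q_V(x_\ieps) + o(1)) - 3\pi (a(x_\ieps) + o(1))\lambda_\ieps \mu_\ieps, \]
so that $|M_a(\bm{x}_\eps)\cdot\bm{\lambda}_\eps| \lesssim \eps + \mu_\eps$ componentwise, hence in norm. Now decompose $\bm{\lambda}_\eps = \bm{\sigma}_\eps + \bm{\delta}_\eps$ as in \eqref{lambda = sigma + delta} with $\bm{\sigma}_\eps \parallel \bm{\Lambda}_\eps$ and $\bm{\delta}_\eps \perp \bm{\Lambda}_\eps$. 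Since $\bm{\Lambda}_\eps$ is the eigenvector of $M_a(\bm{x}_\eps)$ for the lowest eigenvalue $\rho_a(\bm{x}_\eps)$, and $M_a$ is symmetric, $M_a(\bm{x}_\eps)$ leaves the orthogonal complement of $\bm{\Lambda}_\eps$ invariant; on that complement its eigenvalues are all $\geq \rho_2(\bm{x}_\eps)$, the second-lowest eigenvalue of $M_a(\bm{x}_\eps)$. Therefore
\[ |M_a(\bm{x}_\eps)\cdot\bm{\lambda}_\eps|^2 = \rho_a(\bm{x}_\eps)^2|\bm{\sigma}_\eps|^2 + |M_a(\bm{x}_\eps)\cdot\bm{\delta}_\eps|^2 \geq |M_a(\bm{x}_\eps)\cdot\bm{\delta}_\eps|^2 \geq \rho_2(\bm{x}_\eps)^2 |\bm{\delta}_\eps|^2. \]
Combined with the bound $|M_a(\bm{x}_\eps)\cdot\bm{\lambda}_\eps| \lesssim \eps + \mu_\eps$, this gives $|\bm{\delta}_\eps| \lesssim \rho_2(\bm{x}_\eps)^{-1}(\eps + \mu_\eps)$, and the key point is that $\rho_2(\bm{x}_\eps)$ stays bounded away from $0$: by Lemma \ref{lemma perron frobenius}\eqref{item perron x}, $\rho_a(\bm{x}_0)$ is a \emph{simple} eigenvalue of $M_a(\bm{x}_0)$, so $\rho_2(\bm{x}_0) > \rho_a(\bm{x}_0) = 0$; since $\bm{x}_\eps \to \bm{x}_0$ and the eigenvalues depend continuously on $\bm{x}$, $\rho_2(\bm{x}_\eps) \geq c > 0$ for $\eps$ small. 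This yields $|\bm{\delta}_\eps| \lesssim \eps + \mu_\eps$.

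It remains to reconcile this with the extra $|\rho_a(\bm{x}_\eps)|$ term appearing in \eqref{delta lesssim eps}: actually the cleaner version just proved, $|\bm{\delta}_\eps| \lesssim \eps + \mu_\eps$, is stronger, and the stated form with $+|\rho_a(\bm{x}_\eps)|$ is then trivially true; alternatively, if one wants to avoid the spectral-gap step being too slick, one can note that $M_a(\bm{x}_\eps)\cdot\bm{\delta}_\eps = M_a(\bm{x}_\eps)\cdot\bm{\lambda}_\eps - \rho_a(\bm{x}_\eps)\bm{\sigma}_\eps$ has norm $\lesssim \eps + \mu_\eps + |\rho_a(\bm{x}_\eps)|$ (using $|\bm{\sigma}_\eps|\leq|\bm{\lambda}_\eps|\lesssim 1$), and then invert $M_a(\bm{x}_\eps)$ restricted to $\bm{\Lambda}_\eps^\perp$ using the uniform gap. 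The main obstacle, such as it is, is precisely this uniform lower bound on the spectral gap $\rho_2(\bm{x}_\eps) - \rho_a(\bm{x}_\eps)$ — one must use simplicity of $\rho_a(\bm{x}_0)$ from Lemma \ref{lemma perron frobenius} and continuity of eigenvalues to get it, and one must be careful that the decomposition \eqref{lambda = sigma + delta} is with respect to the $\eps$-dependent eigenvector $\bm{\Lambda}_\eps$, not the limiting one; everything else is bookkeeping on the already-established expansions. Finally, plugging $|\bm{\delta}_\eps| \lesssim \eps + \mu_\eps \lesssim \eps + \mu_\eps^\nu$ back into \eqref{nabla rho a a priori bound} gives \eqref{nabla rho lesssim eps nu}.
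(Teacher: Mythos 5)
Your proof is correct and follows essentially the same route as the paper: bound $M_a(\bm{x}_\eps)\cdot\bm{\lambda}_\eps$ via \eqref{expansion G a line i}, split off the $\bm{\Lambda}_\eps$-component, and invert $M_a(\bm{x}_\eps)$ on $\bm{\Lambda}_\eps^{\perp}$ using the uniform spectral gap coming from simplicity of $\rho_a(\bm{x}_0)$, then feed \eqref{delta lesssim eps} back into \eqref{nabla rho a a priori bound}. Your use of the orthogonality of $\rho_a(\bm{x}_\eps)\bm{\sigma}_\eps$ and $M_a(\bm{x}_\eps)\cdot\bm{\delta}_\eps$ even yields the marginally sharper bound $|\bm{\delta}_\eps|\lesssim \eps+\mu_\eps$, whereas the paper just uses the triangle inequality and keeps the harmless $|\rho_a(\bm{x}_\eps)|$ term.
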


\begin{proof}
Writing $M_a(\bm{x}_\eps) \cdot \bm{\lambda}_\eps = \rho_a(\bm{x}_\eps) + M_a(\bm{x}_\eps) \cdot \bm{\delta}_\eps$, from \eqref{expansion G a line i} we plainly get
\[ | M_a(\bm{x}_\eps) \cdot \bm{\delta}_\eps | \lesssim \eps + \mu_\eps + |\rho_a(\bm{x}_\eps)|.  \]
Since $M_a(\bm{x}_\eps)$ has bounded inverse (independently of $\eps$) on the spectral subspace containing $\bm{\delta}_\eps$, this gives \eqref{delta lesssim eps}.

The bound \eqref{nabla rho lesssim eps nu} simply follows by inserting  \eqref{delta lesssim eps} into the a priori bound \eqref{nabla rho a a priori bound} which was already obtained in the proof of Lemma \ref{lemma perron frobenius}. 
\end{proof}

\subsection{Proof of Theorem \ref{theorem multibubble}}

Part \eqref{item conc points} of Theorem \ref{theorem multibubble} is contained in Proposition \ref{proposition preliminaries multi} and Lemma \ref{lemma perron frobenius}, and part \eqref{item global} in Lemmas \ref{lemma tilde G} and \ref{lemma perron frobenius}. Part \eqref{item local} is precisely the statement of Theorem \ref{theorem asmyptotic expansion additive}. So it only remains to prove part \eqref{item blowup} of Theorem \ref{theorem multibubble}. 

To prove \eqref{item blowup}, the crucial step is to  realize that both assumptions (a) and (b) on $\rho_a$ imply 
\begin{equation}
\label{rho a = o(eps)}
\rho_a(\bm{x}_\eps) = o(\eps + \mu_\eps). 
\end{equation}

Indeed, let us assume first that assumption (a) holds, that is, $D^2 \rho_a(\bm{x}_0) \geq c$
for some $c > 0$, in the sense of quadratic forms.  
Then \cite[Lemma 4.2]{Frank2021b} applies to give that  \[ \rho_a(\bm{x}_\eps) \lesssim |\nabla \rho_a(\bm{x}_\eps)|^2. \]
Using \eqref{nabla rho lesssim eps nu} with $\nu > 1/2$, and absorbing $\rho_a(\bm{x}_\eps)^2 = o(\rho_a(\bm{x}_\eps))$ on the left side, \eqref{rho a = o(eps)} follows. 

On the other hand, assume that (b) holds, that is, $\rho_a$ is real-analytic.

Up to extracting a subsequence, we may assume that $\frac{\bm{x}_\eps - \bm{x}_0}{|\bm{x}_\eps - \bm{x}_0|} \to \eta \in \mathbb S^{3n-1}$. (If $x_\eps = x_0$, then \eqref{rho a = o(eps)} is trivially true.) Let 
\[ f(t):= \rho_a(\bm{x}_0 + t \eta).  \]
Then $f$ is well-defined on some open interval containing $0$ and $f$ is analytic because $\rho_a$ is. Moreover, since $|\rho_a(\bm{x})| \to \infty$ as $\bm{x} \to \partial \Omega_*^n$, we must have $f \not \equiv 0$. As a consequence, there is a smallest  $k \geq 2$ such that $f^{(k)}(0) \neq 0$. Defining 
\[ f_\eps (t) := \rho_a \left(\bm{x}_0 + t \frac{\bm{x}_\eps - \bm{x}_0}{|\bm{x}_\eps - \bm{x}_0|} \right), \]
we clearly have $f_\eps^{(k)}(0) \to f^{(k)}(0)$ as $\eps \to 0$ and thus $f_\eps^{(k)}(0) \neq 0$ for all $\eps$ small enough. Let us assume for definiteness that $f_\eps^{(k)}(0) > 0$.  By analyticity and the choice of $k$, we may write 
\[ f_\eps(t) = \frac{f_\eps^{(k)}(0) +o(1)}{k!} t^k \]
and
\[ f'_\eps(t) = \frac{f_\eps^{(k)}(0) + o(1)}{(k-1)!} t^{k-1}. \]
Here $o(1)$ denotes a quantity that tends to zero as $t \to 0$, uniformly in $\eps$. Solving the second equation for $t$ and inserting it into the first, we obtain 
\[ f_\eps(t) = (c_k + o(1)) f'_\eps(t)^\frac{k}{k-1}, \qquad \text{ with } c_k = \frac{1}{k} \left( \frac{(k-1)!}{f^{(k)}(0)} \right)^\frac{1}{k-1} > 0.  \] 
Taking $t = t_\eps := |\bm{x}_\eps - \bm{x}_0|$, we get 
\[ |\rho_a(\bm{x}_\eps)| = |f_\eps(t_\eps)| \leq (c_k + o(1))  f'_\eps(t)^\frac{k}{k-1} \lesssim |\nabla \rho_a(\bm{x_\eps}) \cdot \frac{\bm{x}_\eps - \bm{x}_0}{|\bm{x}_\eps - \bm{x}_0|}| \lesssim  |\nabla \rho_a(\bm{x_\eps})|. \]
Now by using \eqref{nabla rho lesssim eps nu} with  $\nu < 1$ so large that $\nu \frac{k}{k-1} > 1$ and again absorbing $\rho_a(\bm{x}_\eps)^{\nu \frac{k}{k-1}} = o(\rho_a(\bm{x}_\eps))$, \eqref{rho a = o(eps)} follows also under assumption (b).

Armed with \eqref{rho a = o(eps)}, it is now straightforward to conclude the proof of Theorem \ref{theorem multibubble}.\eqref{item blowup}. Since $\ds \lim_{\eps \to 0} \eps \mu_{\jeps}^{-1} = \lambda_{j,0}^{-2} \ds \lim_{\eps \to 0} \eps \mu_{\oeps}^{-1}$, we only need to evaluate the limit $\ds \lim_{\eps \to 0} \eps \mu_{\oeps}^{-1}$. Using \eqref{lambda = sigma + delta} together with \eqref{rho a = o(eps)} and \eqref{delta lesssim eps}, identity \eqref{expansion G a line i} becomes
\begin{equation}
\label{identity prefinal}
\eps \lambda_\ieps (Q_V(x_\ieps) + o(1)) = - 4 \pi \sqrt 3 \sigma_\ieps (M_a(\bm{x}_\eps) \cdot \bm{\delta}_\eps)_i + 3 \pi (a(x_\ieps) +o(1)) \lambda_\ieps^2 \mu_\ieps . 
\end{equation} 
Now write $\mu_\ieps = \lambda_\ieps^2 \mu_\oeps$ and sum over $i$. Since $\bm{\sigma}_\eps \perp \bm{\delta}_\eps$, the first term on the right side of \eqref{identity prefinal} vanishes in the sum. Moreover, recalling that $\mathcal G = \tilde{\mathcal G}$ from Remark \ref{remark G = tilde G}, we can write  $4 \pi \sqrt 3 \sum_i Q_V(x_\ieps)  = \int_\Omega V(x) \mathcal G(x)^2 \diff x$. Thus we obtain 
\begin{equation}
\label{final identity}
 3 \pi \mu_\oeps  \sum_i (a(x_\ieps) +o(1)) \lambda_\ieps^4 = \frac{\eps}{4 \pi \sqrt 3}  \left( \int_\Omega V \mathcal G^2 \diff x  + o(1) \right). 
\end{equation} 
If $\int_\Omega V \mathcal G^2 \diff x \neq 0$, by passing to the limit $\eps \to 0$ in \eqref{final identity} and recalling that $\bm{\lambda}_0 = \bm{\Lambda}_0$ by Lemma \ref{lemma perron frobenius}, we clearly obtain \eqref{mu asymptotics thm}. If $\int_\Omega V \mathcal G^2 \diff x = 0$, suppose that $\sum_i a(x_{i,0})  \lambda_{i,0}^4 < 0$, say. Then the right side of \eqref{final identity} must be strictly negative. Hence the quotient $(\sum_i (a(x_\ieps) +o(1)) \lambda_\ieps^4) ( \int_\Omega V \mathcal G^2 \diff x  + o(1))^{-1}$ is positive and tends to $+ \infty$ as $\eps \to 0$. When $\sum_i a(x_{i,0})  \lambda_{i,0}^4 > 0$, the argument is analogous.

The proof of Theorem \ref{theorem multibubble} is thus complete.

\subsection{Regularity of $\phi_a$ and $\rho_a$}

We end this section by discussing sufficient conditions for $C^2$-differentiability and real-analyticity of the Robin function $\phi_a$, and, in turn, $\rho_a$. (We will use the terms \emph{analytic} and \emph{real-analytic} interchangeably in the following.)

We first observe some sufficient conditions for regularity of $\rho_a$.  

\begin{lemma}
\label{lemma rho_a analytic}
\begin{enumerate}[(i)]
\item If $\phi_a$ and $a$ are real-analytic on $\Omega$ then $\rho_a$ is real-analytic on $\Omega^n_*$. 
\item If $a \in C^{0,1}(\overline{\Omega}) \cap C_\text{loc}^{2,\sigma}(\Omega)$ for some $\sigma \in (0,1)$, then $\phi_a \in C^2(\Omega)$ and $\rho_a \in C^2(\Omega^n_*)$. 
\end{enumerate}

\end{lemma}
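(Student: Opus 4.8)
\textbf{Proof plan for Lemma \ref{lemma rho_a analytic}.}

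The plan is to reduce both statements to regularity properties of the entries $m_{ij}(\bm{x})$ of $M_a(\bm{x})$ — namely of $\phi_a$ on the diagonal and of $G_a(x_i,x_j)$ off-diagonal — and then to transfer that regularity to the lowest eigenvalue $\rho_a$ using the fact, already recorded in Lemma \ref{lemma perron frobenius}, that $\rho_a(\bm{x})$ is a \emph{simple} eigenvalue of $M_a(\bm{x})$ for every $\bm{x} \in \Omega^n_*$. For part (i), once we know $\phi_a$ is analytic and $a$ is analytic, the off-diagonal entries $-G_a(x_i,x_j) = H_a(x_i,x_j) - \frac{1}{4\pi|x_i - x_j|}$ are analytic on $\Omega^n_*$ provided $H_a(x,y)$ is jointly analytic in $(x,y)$; the singular term $|x_i - x_j|^{-1}$ is analytic away from the diagonal, which is exactly where $\Omega^n_*$ lives. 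So the first sub-step is to argue that $H_a(\cdot,\cdot)$ is jointly real-analytic on $\Omega \times \Omega$: this follows because $(x,y) \mapsto H_a(x,y)$ solves, in $x$, the elliptic equation $(-\Delta_x + a(x)) H_a(x,y) = 0$ with the real-analytic boundary data $H_a(x,y) = \frac{1}{4\pi|x-y|}$ for $x \in \partial\Omega$ (here one should note $\partial\Omega$ is $C^2$, so strictly speaking one invokes an interior analytic-regularity/analytic-hypoellipticity statement for $-\Delta + a$ with analytic coefficient $a$, plus analytic dependence on the parameter $y$ — e.g. via Morrey–Nirenberg interior analyticity together with analytic dependence on parameters, or the argument used for Lemma \ref{lemma phi a is analytic}). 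Combined with analyticity of $\phi_a$ on the diagonal, all entries $m_{ij}$ are real-analytic on $\Omega^n_*$.

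Given that $M_a : \Omega^n_* \to \R^{n\times n}$ is a real-analytic matrix-valued map, the second sub-step for part (i) is the eigenvalue perturbation argument. Fix $\bm{x}^0 \in \Omega^n_*$; since $\rho_a(\bm{x}^0)$ is a simple eigenvalue and $M_a$ is symmetric, there is a small neighborhood on which $\rho_a(\bm{x})$ is isolated in the spectrum, and one can write
\[
\rho_a(\bm{x}) = \frac{1}{2\pi i} \Tr \oint_\Gamma z \, (z - M_a(\bm{x}))^{-1} \diff z
\]
for a fixed small circle $\Gamma$ enclosing $\rho_a(\bm{x}^0)$ only; the integrand depends real-analytically on $\bm{x}$ (matrix inversion and trace are analytic, and $z - M_a(\bm{x})$ stays invertible for $z \in \Gamma$, $\bm{x}$ near $\bm{x}^0$), hence $\rho_a$ is real-analytic near $\bm{x}^0$. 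Alternatively, and equivalently, one invokes the analytic implicit function theorem applied to the characteristic polynomial $p(z,\bm{x}) = \det(z - M_a(\bm{x}))$ at the simple root $z = \rho_a(\bm{x}^0)$, where $\partial_z p \neq 0$. Since $\bm{x}^0$ was arbitrary, $\rho_a$ is real-analytic on all of $\Omega^n_*$.

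For part (ii) the structure is identical but with finite smoothness. The task is to show $\phi_a \in C^2(\Omega)$ under $a \in C^{0,1}(\overline{\Omega}) \cap C^{2,\sigma}_{\loc}(\Omega)$; granting this, $H_a(\cdot,\cdot)$ is correspondingly $C^2$ jointly (by Schauder estimates for $-\Delta + a$ with $C^{\sigma}_{\loc}$ coefficient applied to the equation for $H_a$, differentiated in the parameter $y$, using that $|x-y|^{-1}$ is smooth off-diagonal), so $m_{ij} \in C^2(\Omega^n_*)$, and then the same contour-integral / implicit-function-theorem representation of $\rho_a$ at the simple eigenvalue shows $\rho_a \in C^2(\Omega^n_*)$ — here one only needs that $C^2$ regularity of the matrix entries plus the simplicity of the eigenvalue passes through the (smooth) operations of resolvent and trace. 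The main obstacle, and the only genuinely analytic point, is the first sub-step in each case: establishing that $\phi_a$ (equivalently $H_a$) inherits analyticity from $a$, respectively $C^2$-regularity from $a \in C^{0,1} \cap C^{2,\sigma}_{\loc}$. For (i) this is essentially the content of Lemma \ref{lemma phi a is analytic} when $a$ is constant, and for general analytic $a$ one must quote (or reprove) interior analyticity for elliptic operators with analytic coefficients with analytic parameter dependence; for (ii) one runs the standard bootstrap: $H_a(x,y)$ solves an elliptic equation whose right-hand side and coefficients are controlled by $a$ and by the explicit smooth singular part, so Schauder theory gives $H_a \in C^{2,\sigma}_{\loc}$ in $x$ locally uniformly in $y$, and differentiating the defining equation in $y$ (legitimate since the singular part $|x-y|^{-1}$ and its $y$-derivatives are smooth for $x \neq y$, and since $a$ is independent of $y$) upgrades this to joint $C^2$ regularity, in particular along the diagonal, giving $\phi_a \in C^2(\Omega)$. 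The eigenvalue-perturbation step is then routine and the simplicity of $\rho_a$ from Lemma \ref{lemma perron frobenius} is exactly what makes it work.
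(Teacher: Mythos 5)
Your overall architecture --- regularity of the entries of $M_a(\bm{x})$ plus perturbation of the simple eigenvalue $\rho_a(\bm{x})$ via the (analytic or $C^2$) implicit function theorem applied to the characteristic polynomial --- is exactly the paper's, and the eigenvalue step is fine. But your route to the off-diagonal entries in part (i) contains a genuine error. You claim that $H_a(\cdot,y)$ solves the homogeneous equation $(-\Delta_x + a(x))H_a(x,y)=0$ and conclude that $H_a$ is jointly real-analytic on $\Omega\times\Omega$. Neither holds: from \eqref{Ga-pde} and \eqref{ha-def} one gets $(-\Delta_x + a(x))H_a(x,y) = \frac{a(x)}{4\pi|x-y|}$, whose right-hand side is singular precisely on the diagonal, so analytic hypoellipticity does not apply there. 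Indeed, joint analyticity of $H_a$ up to the diagonal would immediately yield analyticity of $\phi_a$ from analyticity of $a$, which the paper explicitly records as an open question (this is why part (i) takes analyticity of $\phi_a$ as a \emph{hypothesis}, and why Lemma \ref{lemma phi a is analytic} needs the series ansatz even for constant $a$). The fix is the paper's: for the off-diagonal entries you only need $G_a(x_i,x_j)$ for $x_i\neq x_j$, and $G_a$ itself satisfies the homogeneous elliptic equation with analytic coefficients away from the diagonal, which is all that $\Omega^n_*$ requires; the singular term $\frac{1}{4\pi|x_i-x_j|}$ never has to be subtracted off.

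A related conceptual slip appears in your closing paragraph, where you describe ``establishing that $\phi_a$ inherits analyticity from $a$'' as the main obstacle for part (i). It is not an obstacle at all --- it is assumed --- and treating it as something to be proved by ``quoting interior analyticity with analytic parameter dependence'' would turn the lemma into the open problem. For part (ii), the paper simply cites \cite[Lemma 4.1]{Frank2021b} for $\phi_a\in C^2(\Omega)$; your bootstrap sketch is plausible in spirit but would need the same care with the singular right-hand side in the equation for $H_a$, so if you do not want to reprove that regularity statement you should cite it.
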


In the statement of Lemma \ref{lemma rho_a analytic}, we chose to assume global analyticity of $a$ and $\phi_a$ for simplicity. Since analyticity is a local property, it would of course be equally possible to conclude analyticity of $\rho_a$ in a neighborhood of some $\bm{x}_0 = (x_1,...,x_n)$ by assuming $a$ and $\phi_a$ to be analytic on neighborhoods of $x_1,...,x_n$.

\begin{proof}
It is a general fact that if a matrix $M(\xi)$ depends analytically on some parameter $\xi \in \R^n$ in a neighborhood of $\xi_0 \in \R^n$, then its simple eigenvalues also depend analytically on these parameters. This is a direct consequence of the analytic implicit function theorem applied to $p(\xi, \lambda) := \det(M(\xi) - \lambda \,  \text{Id})$ and the fact that $p(\xi_0, \lambda_0) = 0 \neq \partial_\lambda p(\xi_0, \lambda_0))$ if and only if $\lambda_0$ is a simple eigenvalue of $M(\xi_0)$.

We apply this fact to the matrix $M(\bm{x}$ with parameter $\bm{x} \in \Omega^n_*$. The off-diagonal entries $G_a(x_i,x_j)$ are always analytic in $x_i$, $x_j$ because $x_i \neq x_j$. This follows from elliptic regularity and the fact that $G_a(x,y)$ solves the PDE $-\Delta_y G_a(x,y) =-  a(x) G_a(x,y)$ on  $\Omega \setminus \{x\}$ with analytic coefficient $a(x)$. Now by assumption, the diagonal entries of $M(\bm{x})$ also depend analytically on $\bm{x}$. This completes the proof of (i). 

For (ii), we know from \cite[Lemma 4.1]{Frank2021b} that $\phi_a \in C^2(\Omega)$. Then the $C^2$-differentiability follows as above using the implicit function theorem formulated for $C^2$ functions. 
\end{proof}

In the simplest case where $a$ is a constant, we can prove that the hypothesis of Lemma \ref{lemma rho_a analytic}(i) is indeed fulfilled. 

\begin{lemma}
\label{lemma phi a is analytic}
Suppose that $a \equiv \text{const.}$. Then $\phi_a$ is real-analytic on $\Omega$ and $\rho_a$ is real-analytic on $\Omega^n_*$. 
\end{lemma}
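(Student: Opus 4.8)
\textbf{Proof plan for Lemma \ref{lemma phi a is analytic}.}

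The plan is to reduce everything, via Lemma \ref{lemma rho_a analytic}(i), to the single statement that the Robin function $\phi_a$ is real-analytic on $\Omega$ when $a$ is constant. For this I would invoke the classical characterization: a function is real-analytic on an open set of $\R^3$ precisely when it extends to a holomorphic function of the complexified variables in a neighborhood of that set. Concretely, fix $y_0 \in \Omega$; I want to produce a holomorphic extension of $z \mapsto \phi_a(z)$ to a polydisc around $y_0$ in $\C^3$. The natural route is through the defining equation: $H_a(\cdot, y)$ solves $(-\Delta + a) H_a(\cdot, y) = 0$ in $\Omega$ with boundary value $\frac{1}{4\pi|\cdot - y|}$ on $\partial\Omega$. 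Since $a$ is constant, the operator $-\Delta + a$ has constant (in particular analytic) coefficients, so solutions are real-analytic in the interior by elliptic regularity; the point is to get analyticity \emph{jointly} in $x$ and in the parameter $y$, and then to restrict to the diagonal $x = y$, which preserves analyticity.

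The key steps, in order, would be: (1) Recall that, because $a \equiv \mathrm{const}$, the fundamental solution $\Phi_a(x-y)$ of $-\Delta + a$ on $\R^3$ is an explicit, real-analytic function of $x - y$ away from the diagonal (for $a > 0$ it is a Yukawa-type kernel $\frac{e^{-\sqrt a |x-y|}}{4\pi |x-y|}$; for $a \le 0$ one uses the corresponding modified kernel, and coercivity of $-\Delta + a$ on $\Omega$ guarantees the Dirichlet problem is still solvable). Write $G_a(x,y) = \Phi_a(x-y) - h_y(x)$, where for each $y \in \Omega$ the corrector $h_y$ solves the boundary value problem
\begin{equation}
\label{hy-problem}
(-\Delta_x + a) h_y(x) = 0 \quad \text{in } \Omega, \qquad h_y(x) = \Phi_a(x - y) \quad \text{on } \partial\Omega.
\end{equation}
Then $\phi_a(y) = H_a(y,y) = \lim_{x \to y}\bigl(\tfrac{1}{4\pi|x-y|} - \Phi_a(x-y)\bigr) + h_y(y)$; the first limit is an explicit analytic function of $a$ alone (in fact a constant, since $\Phi_a(x-y) - \tfrac{1}{4\pi|x-y|}$ is analytic across the diagonal), so it suffices to show $y \mapsto h_y(y)$ is real-analytic. (2) Show $(x,y) \mapsto h_y(x)$ is real-analytic on $\Omega \times \Omega$. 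The cleanest argument: the boundary data $(x,y) \mapsto \Phi_a(x-y)$, for $x \in \partial\Omega$ and $y$ in a compact subset $K$ of $\Omega$, is real-analytic (the arguments stay off the diagonal), hence extends holomorphically in both $x$ (near $\partial\Omega$, in a complex neighborhood) and $y$ (near $K$). Feeding a holomorphic-in-$y$ family of boundary data into the solution operator of the (fixed, analytic-coefficient) Dirichlet problem produces a holomorphic-in-$y$ family of solutions, each of which is moreover analytic in $x$ by interior elliptic analytic regularity (Morrey--Nirenberg). One makes this rigorous either by a Cauchy-estimate/Neumann-series argument on the solution operator, or by citing the analytic dependence of solutions of elliptic boundary value problems on analytic data — essentially the content of the references \cite{John1950, Khenissy2010, Franceschini2021} alluded to in Remark \ref{remark nondeg conditions}. (3) Restrict to the diagonal: $y \mapsto h_y(y)$ is the composition of the real-analytic map $y \mapsto (y,y)$ with the real-analytic function $(x,y)\mapsto h_y(x)$, hence real-analytic; therefore $\phi_a$ is real-analytic on $\Omega$. (4) Apply Lemma \ref{lemma rho_a analytic}(i) with this $\phi_a$ and the (trivially analytic) constant function $a$ to conclude $\rho_a$ is real-analytic on $\Omega^n_*$.

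The main obstacle is step (2): upgrading interior analyticity of $h_y(\cdot)$ in $x$ (which is immediate from analytic-coefficient elliptic regularity) to \emph{joint} analyticity in $(x,y)$, i.e. genuine analytic dependence on the parameter $y$ entering only through the boundary data. One has to be careful that $\partial\Omega$ is only assumed $C^2$, so one cannot naively complexify the domain; the right framework is to fix a slightly smaller smooth (or even analytic) interior surface, or to use that $h_y$ restricted to a compact interior subset depends on the boundary data only through a bounded solution operator with values in, say, $C^2(\overline{\Omega'})$ for $\Omega' \Subset \Omega$, and that this operator is bounded linear, hence preserves holomorphy of a parameter-dependent family. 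Once the holomorphic-in-$y$ extension of $h_y(x)$ for $x$ in an interior neighborhood is in hand, interior analytic elliptic regularity gives joint analyticity on $\Omega' \times \Omega'$, and since $\Omega'$ was an arbitrary compactly contained subdomain, on all of $\Omega \times \Omega$. I would present this as the crux and handle steps (1), (3), (4) briskly.
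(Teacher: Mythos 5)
Your outline is sound and shares the paper's first move --- subtract from $1/(4\pi|x-y|)$ an explicit constant-coefficient parametrix so that what remains is a solution of the homogeneous equation --- but you and the paper part ways at the crux, namely why that remainder is analytic \emph{on the diagonal}. The paper never goes near $\partial\Omega$: it subtracts the explicit series $\sum_k h_k(x,y)$ of odd powers of $|x-y|$ (which sums to the difference between $1/(4\pi|x-y|)$ and a fundamental solution of $-\Delta+a$, i.e.\ your $\Phi_a$ up to a homogeneous solution), notes that $H_a$ and each $h_k$ are \emph{symmetric} in $(x,y)$, and concludes that the remainder $\eta$ solves $(-\Delta+a)\eta=0$ in each variable separately; interior analytic hypoellipticity for the constant-coefficient operator then gives analyticity for free. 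You instead characterize the corrector $h_y$ by its boundary data and derive analyticity in $y$ from analytic parameter dependence of a Dirichlet problem, which forces you through the Banach-valued holomorphy machinery and the separate-to-joint upgrade that you rightly flag as the main obstacle. That route can be made to work, but within your own setup you can bypass it entirely: by symmetry of $G_a$ and of $\Phi_a(x-y)$, the function $u(x,y):=h_y(x)$ also satisfies $(-\Delta_y+a)u=0$, hence $(-\Delta_x-\Delta_y+2a)u=0$ on $\Omega\times\Omega\subset\R^6$, a constant-coefficient elliptic equation, so $u$ is \emph{jointly} real-analytic and restriction to the diagonal is immediate --- this is the paper's mechanism in your notation, and it also repairs the separate-versus-joint analyticity issue that your step (2) would otherwise have to confront head-on. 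One small correction: $\Phi_a(x-y)-\tfrac{1}{4\pi|x-y|}$ is \emph{not} analytic across the diagonal (for the Yukawa kernel it equals $-\tfrac{\sqrt a}{4\pi}+\tfrac{a}{8\pi}|x-y|+\dots$, with odd powers of $|x-y|$); what is true, and all you need, is that it extends continuously to the diagonal with a value there that is a constant independent of $y$.
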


\begin{proof}
We write $H_a(x,y)$ as  
\begin{equation}
\label{infinite sum H a}
H_a(x,y) = \eta (x,y) +  \sum_{k = 0}^\infty h_k(x,y), 
\end{equation} 
for some sequence of functions $h_k(x,y)$ satisfying 
\begin{equation}
\label{h k recursion start}
-\Delta_x h_0(x,y) = \frac{a}{4 \pi |x-y|}  \qquad \text{ for } x, y \in \Omega, 
\end{equation} 
and recursively, for $k \geq 1$, 
\begin{equation}
\label{h k recursion}
-\Delta_x h_k(x,y) = - a h_{k-1}(x,y) \qquad \text{ for } x, y \in \Omega. 
\end{equation} 
It is easy to verify that $h_k(x,y)$ given by 
\begin{equation}
\label{h k}
h_k(x,y) = \frac{a^{k+1} }{4 \pi (2k+2)!} |x-y|^{2k+1}, 
\end{equation} 
satisfies \eqref{h k recursion start}--\eqref{h k recursion}. In particular, with this choice the sum in \eqref{infinite sum H a} converges indeed on all of $\R^3$. By construction, the remainder function $\eta(x,y)$ satisfies 
\[ -\Delta_x  \eta(x,y) + a(x) \eta(x,y) = 0. \]
Since $a$ is analytic, elliptic regularity theory implies that $\eta(x,y)$ is analytic as a function of $x$. On the other hand, $\eta(x,y)$ is symmetric in $x$ and $y$ because $H_a(x,y)$ is and so are all the $h_k(x,y)$ by their explicit expressions given in \eqref{h k}. Hence as a function of $y$, $\eta(x,y)$ satisfies
\[ -\Delta_y \eta(x,y) +a(y) \eta(x,y) = 0, \]
and as above we conclude that $\eta(x,y)$ is analytic in $y$. 

Since 
\begin{equation}
\label{h k zero on diagonal}
h_k(x,x) = 0
\end{equation}
 by \eqref{h k}, from \eqref{infinite sum H a} we obtain $\phi_a(x) = \eta(x,x)$
and hence $\phi_a$ is analytic. 
\end{proof}

The above proof, in particular the ansatz given by \eqref{infinite sum H a}--\eqref{h k recursion}, still appears to be a promising approach to prove  analyticity of $\phi_a$ in the more general case where $a$ is analytic, but non-constant. However, there are several obstructions to a straightforward adaptation. Firstly,  the $h_k(x,y)$ will not have a simple expression as in \eqref{h k} because of additional terms coming from derivatives of $a$. In particular, it is much harder to find a way to simultaneously justify \eqref{h k zero on diagonal} and the convergence of the sum in \eqref{infinite sum H a}. Secondly, it is not clear how to prove analyticity of $\eta(x,y)$ in the second variable $y$. In particular, the symmetry of $h_k$ in $x$ and $y$ seems problematic to ensure. 

\appendix

\section{ A Liouville type result for the solutions of the linearized equation}
\label{A}
In this section, we give a general statement for the classification of the solution of the linearized equation \eqref{lin eq}. This kind of result is not new, see for instance \cite[Proposition 2]{KMPS} or \cite[Proposition 3]{MP}. The cited results concern solutions to \eqref{lin eq} which are singular in the origin, and the linearization takes place, somewhat more generally, about a singular solution to $-\Delta u = u^\frac{N+2}{N-2}$ instead of the bubble. Nevertheless the proof of  those former results can also be applied in our framework. For the sake of completeness and accessibility, we now make a precise statement in our context and give a quick sketch of its proof.

We consider the equation
\begin{equation}
\label{lin eq}
-\Delta v = N(N+2)  \tilde{B}^{p-1} v \qquad \text{ on } \R^N,
\end{equation}
with $p = \frac{N+2}{N-2}$ and $\tilde{B}(x) = \left( 1 + |x|^2\right)^{-\frac{N-2}{2}}$. Plainly, \eqref{lin eq} is the linearization of $-\Delta u = N(N-2) u^p$ at the solution $\tilde{B}$. (Note that the normalization of the bubble we choose here is different from the one employed in Theorem \ref{theorem multibubble}. This turns out to be  more natural and convenient for some of the related functions appearing below.) 

The canonical solutions to \eqref{lin eq} are linear combinations of the functions
\begin{equation}
\label{psi 0 definition}
w_0(x):= \frac{1-|x|^2}{(1+|x|^2)^{N/2}} = \frac{2}{2-N} \partial_\mu \left(\mu^{-\frac{N-2}{2}} \tilde{B}(\mu^{-1} x)\right) \Big|_{\mu=1}
\end{equation}
and 
\begin{equation}
\label{psi i definition}
w_i(x) := \frac{x_i}{(1+|x|^2)^{N/2}} = \frac{1}{N-2} \partial_{y_i} \left( \tilde{B}(x - y) \right)\Big|_{y=0},  \qquad i = 1,...,N,
\end{equation}
which arise as derivatives of $\tilde{B}$ with respect to its symmetry parameters. 

The main non-degeneracy result reads as follows. 

\begin{proposition}
\label{theorem non degeneracy intro}
Let $v$ be a solution to \eqref{lin eq} and suppose that $|v(x)| \lesssim |x|^\tau$ for all $|x| \geq 1$, for some $\tau \geq -N+2$. 
Then 
\begin{equation}
\label{u linear combi theorem}
v(x) =  \sum_{i=0}^3 c_i w_i(x) + \sum_{k= 2}^{\lfloor \tau \rfloor} v_k^-(r) Y_k(\omega)
\end{equation}
for some smooth functions $v_k^-$ defined on $(0, \infty)$. Here $r = |x|$, $\omega = x/|x|$ and $Y_k$ is a spherical harmonic on $\mathbb S^{N-1}$ of degree $k$. 

Moreover, for every $k \geq 2$, we have  either $v_k^-\equiv 0$  or $v_k^-(r) \sim r^k$ both as $r \to 0$ and as $r \to \infty$. 

\end{proposition}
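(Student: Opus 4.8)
The plan is to expand $v$ in spherical harmonics on $\mathbb{S}^{N-1}$, reducing the PDE \eqref{lin eq} to a family of decoupled ODEs, one for each degree $k$, and then analyze each ODE separately using the explicit radial profile $\tilde B^{p-1}(r) = (1+r^2)^{-2}$. Write $v(x) = \sum_{k\geq 0} \sum_m v_{k,m}(r) Y_{k,m}(\omega)$, so that each coefficient $v_{k,m}=:v_k$ (suppressing $m$) solves the ODE
\begin{equation}
\label{ode k}
-v_k'' - \frac{N-1}{r} v_k' + \frac{k(k+N-2)}{r^2} v_k = \frac{N(N+2)}{(1+r^2)^2} v_k \qquad \text{ on } (0,\infty).
\end{equation}
First I would record the two-dimensional solution space of \eqref{ode k} for each $k$: near $r=0$ the indicial roots are $k$ and $-(k+N-2)$, and near $r=\infty$ they are $-k-N+2$ (wait, same exponents by the conformal symmetry $r \mapsto 1/r$), so there is a distinguished solution $v_k^-$ regular at the origin behaving like $r^k$ as $r\to 0$, and a second solution $v_k^+$ behaving like $r^{-(k+N-2)}$ as $r \to 0$. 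The key structural input is that for $k=0$ and $k=1$ one knows explicit solutions coming from the symmetries: $w_0$ furnishes the $k=0$ regular solution, and the $w_i$ furnish the $k=1$ regular solutions. The growth hypothesis $|v(x)|\lesssim |x|^\tau$ forces, for each fixed $k$, that $v_k$ is a combination $a_k v_k^- + b_k v_k^+$ with the constraint that it grows no faster than $r^\tau$ at infinity and (crucially) is not too singular at the origin — indeed since $v$ itself is smooth (it solves an elliptic equation with smooth, bounded coefficient $\tilde B^{p-1}$, so by elliptic regularity $v\in C^\infty(\R^N)$), each $v_k$ must be the regular solution up to scalar: $v_k = a_k v_k^-$, and $b_k = 0$. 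This already kills all singular pieces.

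Next I would pin down which degrees $k$ can appear with nonzero coefficient. For $k=0$: the regular solution space is one-dimensional, and one must check whether $v_0^-$ itself is bounded or grows. A direct computation (or the conformal trick) shows $w_0(x)=\frac{1-|x|^2}{(1+|x|^2)^{N/2}}$ is the regular radial solution, which decays like $|x|^{-N+2}$; but the ODE \eqref{ode k} with $k=0$ has a second, independent solution that grows, and the point is that the *regular-at-origin* solution is exactly (a multiple of) $w_0$ — so $v_0 = c_0 w_0$. Similarly for $k=1$, the regular solutions are spanned by the $w_i$, which decay, so $v_1 = \sum_{i=1}^N c_i w_i$. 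For $k\geq 2$, I would argue that the regular solution $v_k^-$ satisfies $v_k^-(r)\sim r^k$ as $r\to 0$ by construction, and then — here is the content — show it behaves like $r^k$ (not a slower or faster rate) as $r\to\infty$ as well. This follows from the conformal/Kelvin symmetry of \eqref{lin eq}: the map $v(x) \mapsto |x|^{-(N-2)} v(x/|x|^2)$ preserves the solution set, and it interchanges the behaviors at $0$ and $\infty$ while sending a degree-$k$ harmonic component with radial part $\sim r^k$ near $0$ to one with radial part $\sim r^{k}$ near $\infty$ (since $|x|^{-(N-2)} (r^{-1})^{k} \cdot$ [reflection of $Y_k$], combined with $Y_k(x/|x|^2)=|x|^{-2k}Y_k(x)$, gives overall $r^{-(N-2)-k}\cdot r^{2k} = r^{k-N+2}$ — wait, need to track this carefully). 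The precise bookkeeping of exponents under the Kelvin transform is the one genuinely delicate computation; the upshot I expect is that $v_k^-$ is, up to scaling, Kelvin-invariant and hence $v_k^-(r)\sim r^k$ at both ends, which is exactly the asserted dichotomy $v_k^-\equiv 0$ or $v_k^-\sim r^k$.

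Finally I would assemble: the growth bound $|v(x)|\lesssim |x|^\tau$ together with $v_k^-\sim r^k$ at infinity forces $a_k = 0$ for all $k > \tau$, i.e. only $k \leq \lfloor\tau\rfloor$ survive, giving exactly the finite sum in \eqref{u linear combi theorem} (the $k=0,1$ terms collected into $\sum_{i=0}^{N-1} c_i w_i$ — note the statement writes $\sum_{i=0}^3$ since it is stated for $N=3$ — and the $k\geq 2$ terms written as $v_k^-(r)Y_k(\omega)$). One technical point to handle with care is the convergence and termwise differentiability of the spherical-harmonic expansion: since $v$ is smooth and the coefficients satisfy a decoupled ODE, each $v_k$ is automatically smooth on $(0,\infty)$, and the finiteness of the sum (once we know $a_k=0$ for large $k$) makes convergence a non-issue — but one should note that a priori the sum is infinite and justify that only finitely many $v_k$ are nonzero, which is exactly what the growth bound delivers via the $v_k^-\sim r^k$ asymptotics. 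The main obstacle is therefore the sharp two-sided asymptotic analysis of the regular ODE solution $v_k^-$ for $k\geq 2$ — establishing that it grows precisely like $r^k$ (and not, say, like $r^{-(k+N-2)}$ admixed in, which is ruled out by regularity, nor like a logarithmically-corrected rate) — for which the Kelvin symmetry of the linearized equation is the right tool, as used in \cite[Section 2.2]{KMPS}.
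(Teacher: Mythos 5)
Your overall architecture matches the paper's: expand in spherical harmonics, use regularity at the origin to kill the singular branch $v_k^+$ of each mode ODE, identify the $k=0,1$ modes with the symmetry-generated solutions $w_0,\dots,w_N$, and use the growth bound at infinity to truncate the sum at $k=\lfloor\tau\rfloor$. But the step you yourself flag as ``the one genuinely delicate computation'' --- proving that for $k\geq 2$ the regular solution satisfies $v_k^-(r)\sim r^k$ as $r\to\infty$ and not a decaying rate --- is a genuine gap, and the tool you propose for it (Kelvin symmetry) cannot close it. The Kelvin transform does preserve the solution set of \eqref{lin eq} and exchanges the indicial behaviors at $0$ and $\infty$, but it is perfectly consistent with \emph{both} scenarios: either $v_k^-$ grows like $r^k$ at infinity (in which case $K$ swaps $v_k^-$ and $v_k^+$), or $v_k^-$ is simultaneously regular at $0$ and decaying like $r^{-(N-2)-k}$ at infinity (in which case $v_k^-$ is Kelvin-invariant up to a constant). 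The decisive counterexample to your argument is that the second scenario actually occurs for $k=0$ and $k=1$: $w_0\sim 1$ at the origin but $\sim r^{-(N-2)}$ at infinity, and $w_i\sim r$ at the origin but $\sim r^{-(N-1)}$ at infinity, even though the Kelvin symmetry is present for those modes too. Since your key step never uses the hypothesis $k\geq 2$, it would prove a false statement for $k=0,1$.

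What is actually needed is a spectral/ODE non-resonance argument that distinguishes $k\geq 2$ from $k\leq 1$. The paper's route (following \cite{MPU} and \cite[Section 2.2]{KMPS}) is to substitute $v_k(r)=r^{-(N-2)/2}\psi_k(\ln r)$, turning the mode equation into $\psi_k''-\mu_k^2\psi_k+g(t)\psi_k=0$ with $\mu_k=k+\tfrac{N-2}{2}$ and $g(t)=\tfrac{N(N+2)}{4}\cosh^{-2}(t)$, and then to show that for $k\geq 2$ the solution behaving like $e^{\mu_k t}$ as $t\to-\infty$ also behaves like $e^{\mu_k t}$ as $t\to+\infty$; this is where $\mu_k^2>\mu_1^2$ enters, e.g.\ via a Riccati or Sturm comparison with the explicit $k=1$ solution. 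You would need to supply an argument of this type (or cite one) in place of the Kelvin-transform reasoning; the rest of your proof then goes through as in the paper.
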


\begin{proof}
Since $-\Delta$ is diagonal with respect to spherical harmonics and $\tilde{B}$ is radial, we can write a solution to \eqref{lin eq} as 
\[ v = \sum_{k = 0}^\infty v_k(r) Y_k(\theta), \]
where $Y_k$ is a suitable spherical harmonic of degree $k$ and $v_k$ solves the equation 
\begin{equation}
\label{v_k equation}
v''_k + \frac{N-1}{r} v'_k + \left(N(N+2) \tilde{B}^{p-1} - \frac{k (k + N-2)}{r^2} \right) v_k = 0. 
\end{equation} 
It follows from the discussion on \cite[p. 310-311]{MPU}, that the only solutions for degrees $k = 0,1$ which satisfy $|v_k(r)| \lesssim r^\tau$ are constant multiples of $v_0(r) = \frac{1 - r^2}{(1 + r^2)^{N/2}}$ and $v_1(r) = \frac{r}{(1 + r^2)^{N/2}}$.

For $k \geq 2$, passing to logarithmic coordinates via 
\begin{equation}
\label{log change of var}
v_k(r) = r^{-\frac{N-2}{2}} \psi_k (\ln r), 
\end{equation} 
the new unknown function $\psi_k$ satisfies 
\begin{equation}
\label{lin log}
\mathcal L_k \psi_k := \psi''_k(t) - \mu_k^2 \psi_k(t) + g(t) \psi_k(t)  = 0, 
\end{equation} 
where we have set 
\[ 
\mu_k := \frac{N-2}{2} + k \quad \text{ and } \quad g(t) := \frac{N(N+2)}{4} \cosh(t)^{-2}.
\] 
It follows from arguments developed in \cite{MPU}, see also \cite[Section 2.2]{KMPS}, that for each $k \geq 2$ there are precisely two linearly independent solutions $\psi_k^\pm$ to \eqref{lin log}, satisfying $|\psi_k^\pm(t)| \sim e^{\mp \mu_k t}$ for $t \in \R$. (This could alternatively also be proved via elementary ODE analysis.) 

The solutions $v_k^\pm$ associated to $\psi_k^\pm$ via \eqref{log change of var} then satisfy, if not identically equal to $0$,
\begin{equation}
\label{bounds v_k} 
|v_k^-(r)|  \sim r^k, \qquad |v_k^+(r)| \sim r^{-N + 2 - k} \qquad \text{ for } r > 0. 
\end{equation}
In particular, $v_k^+$ is singular near zero. Since $v$ is bounded near the origin by assumptions, $v_k$ must be proportional to $v_k^-$ for every $k \geq 2$. Moreover, because of the growth bound $|v(x)| \lesssim |x|^\tau$ as $|x| \to \infty$, \eqref{bounds v_k} forces $v_k^- \equiv 0$ for every $k > \lfloor \tau \rfloor$. This yields the claimed expression for $v$. 
\end{proof}


In our proof of Theorem \ref{theorem multibubble}, we  use the non-degeneracy statement of Proposition \ref{theorem non degeneracy intro} in the form of the following corollary.

\begin{corollary}
\label{corollary u = 0}
Let $v$ be a solution to \eqref{lin eq} and suppose that $|v(x)| \lesssim |x|^\tau$ on $\R^{N}$ for some $\tau \in (1, \infty) \setminus \N$. Then $v \equiv 0$. 
\end{corollary}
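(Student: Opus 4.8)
The plan is to deduce Corollary \ref{corollary u = 0} directly from Proposition \ref{theorem non degeneracy intro}. Given a solution $v$ to \eqref{lin eq} with $|v(x)| \lesssim |x|^\tau$ on all of $\R^N$, we in particular have the hypothesis $|v(x)| \lesssim |x|^\tau$ for $|x| \geq 1$ with $\tau > 1 > -N+2$, so Proposition \ref{theorem non degeneracy intro} applies and yields the decomposition
\[
v(x) = \sum_{i=0}^N c_i w_i(x) + \sum_{k=2}^{\lfloor \tau \rfloor} v_k^-(r) Y_k(\omega),
\]
where each $v_k^-$ is either identically zero or satisfies $v_k^-(r) \sim r^k$ as $r \to 0$ and as $r \to \infty$.

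First I would use the growth bound \emph{near the origin} to kill the nonstandard modes. Each $w_i$ with $i=0,\dots,N$ is bounded near $0$ (indeed smooth on $\R^N$), and the bound $|v(x)| \lesssim |x|^\tau$ with $\tau > 1$ forces $v(0) = 0$ and $\nabla v(0) = 0$. Since $w_0(0) = 1 \neq 0$ and the $w_i$, $i=1,\dots,N$, contribute the only nonzero gradient at the origin (with $\nabla w_i(0)$ spanning $\R^N$), projecting onto the degree-$0$ and degree-$1$ spherical harmonics at small radius gives $c_0 = c_1 = \dots = c_N = 0$. It then remains to show the higher modes vanish too.

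The key step is to rule out the terms $v_k^-(r) Y_k(\omega)$ for $2 \leq k \leq \lfloor \tau \rfloor$ using that $\tau \notin \N$. The point is that if $v_k^- \not\equiv 0$, then $|v_k^-(r)| \sim r^k$ as $r \to \infty$, so projecting $v$ onto $Y_k$ over a large sphere of radius $r$ gives a contribution of exact size $r^k$. Since the spherical harmonics $Y_k$ of distinct degrees are orthogonal on $\mathbb{S}^{N-1}$, and since (after the first step) $v$ equals precisely $\sum_{k=2}^{\lfloor \tau \rfloor} v_k^-(r) Y_k(\omega)$, we get $\|v(r\cdot)\|_{L^2(\mathbb{S}^{N-1})}^2 = \sum_k |v_k^-(r)|^2 \|Y_k\|_{L^2}^2$. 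If any $v_k^-$ is nonzero, the right-hand side is $\gtrsim r^{2 k_{\max}}$ as $r \to \infty$, where $k_{\max} \leq \lfloor \tau \rfloor < \tau$ (here is exactly where $\tau \notin \N$ is used — actually one only needs $\lfloor \tau \rfloor \leq \tau$, but the strict inequality $\lfloor \tau \rfloor < \tau$ from $\tau \notin \N$ makes the contradiction cleanest). On the other hand, the hypothesis $|v(x)| \lesssim |x|^\tau$ gives $\|v(r\cdot)\|_{L^2(\mathbb{S}^{N-1})} \lesssim r^\tau$, so $r^{k_{\max}} \lesssim r^\tau$ as $r \to \infty$ would require $k_{\max} \geq \tau$, contradicting $k_{\max} = \lfloor \tau \rfloor < \tau$. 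Hence all $v_k^- \equiv 0$, and combined with the first step we conclude $v \equiv 0$.

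The main obstacle, such as it is, is really just bookkeeping: making sure the decomposition from Proposition \ref{theorem non degeneracy intro} is used with the orthogonality of spherical harmonics correctly, and being careful that the $w_i$ for $i \geq 2$ (if one worried about them) do not sneak back in — but in fact Proposition \ref{theorem non degeneracy intro} only produces $w_0,\dots,w_N$ among the "canonical" solutions, so after the origin argument those are gone and only the genuinely superlinear modes $v_k^- Y_k$ remain to be eliminated by the growth-at-infinity argument. No serious analytic difficulty arises here; the work was all in Proposition \ref{theorem non degeneracy intro}.
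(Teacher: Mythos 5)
Your first step (eliminating $c_0,\dots,c_N$ from the behavior at the origin) is fine and matches the paper. But the key step, where you try to kill the modes $v_k^- Y_k$ for $2 \le k \le \lfloor\tau\rfloor$, contains a genuine error: you derive the contradiction at $r \to \infty$, and there is no contradiction there. If $v_{k}^-(r) \sim r^{k}$ as $r \to \infty$ with $k \le \lfloor\tau\rfloor < \tau$, then $\|v(r\cdot)\|_{L^2(\mathbb S^{N-1})} \gtrsim r^{k_{\max}}$ is perfectly compatible with the upper bound $\lesssim r^\tau$, since for $r \to \infty$ the inequality $r^{k_{\max}} \lesssim r^\tau$ requires $k_{\max} \le \tau$, not $k_{\max} \ge \tau$ as you assert. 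Indeed, the whole point of Proposition \ref{theorem non degeneracy intro} is that precisely the modes with $k \le \lfloor\tau\rfloor$ are \emph{allowed} by the growth condition at infinity; they cannot be excluded by looking at infinity again.

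The correct contradiction comes from the origin, which is where the paper argues. Since $v_k^-(r) \sim r^k$ also as $r \to 0$, the \emph{smallest} $k_0$ with $v_{k_0}^- \not\equiv 0$ gives $\|v(r\cdot)\|_{L^2(\mathbb S^{N-1})} \sim r^{k_0}$ as $r \to 0$, whereas the hypothesis forces this quantity to be $\lesssim r^\tau$; near zero this requires $k_0 \ge \tau$, which contradicts $k_0 \le \lfloor\tau\rfloor < \tau$ (this is exactly where $\tau \notin \N$ is used). Your $L^2(\mathbb S^{N-1})$ orthogonality bookkeeping is otherwise sound, so the repair is only to run the same computation in the limit $r \to 0$ with the smallest nonvanishing mode instead of $r \to \infty$ with the largest; as written, though, the step fails.
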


\begin{proof}
By Proposition \ref{theorem non degeneracy intro}, $v$ is of the form \eqref{u linear combi theorem}. But now it is easy to see that the assumption $v(x) \lesssim |x|^\tau$ as $|x| \to 0$ with $\tau > 1$, together with \eqref{bounds v_k}, forces $c_i = 0$ for $i = 0,...,N$. If $\tau < 2$, we are done. If $\tau > 2$, we have 
\begin{equation}
\label{bound tau contradiction}
\infty > C \geq \lim_{|x| \to 0} \frac{|v(x)|}{|x|^\tau} = \sum_{k = 2}^\infty  \frac{|v_k^-(r)|}{r^\tau} Y_k(\theta). 
\end{equation}
Now, if some $v_k^-$ is not identically equal to $0$, then we consider $k_0$ smallest $k$ such that $v_k^-\not \equiv 0$. Since $\tau$ is non-integer, $k_0<\tau$, and we get, by \eqref{bounds v_k}, 
$$
\frac{|v(x)|}{|x|^\tau} \sim \vert x\vert^{k_0 -\tau} \text{ at } 0.
$$
This yields a contradiction with \eqref{bound tau contradiction}, hence $v \equiv 0$ as claimed.
\end{proof}

The preceding proof also shows that the restriction $\tau \notin \N$ is necessary for Corollary \ref{corollary u = 0} to hold. Indeed, for any $k \in \N$ and any spherical harmonic $Y_k$ of degree $k$, the function $v(x) = v^-_k(r) Y_k(\omega)$ satisfies  \eqref{lin eq} with $|v(x)| \lesssim |x|^k$ for all $x \in \R^N$, while certainly $v \not \equiv 0$.

\section{Some computations}

\begin{lemma}
\label{lemma int B^5 Ga}
Let $a \in C(\overline{\Omega}) \cap C^{1, \sigma}_\text{loc}(\Omega)$ for some $\sigma > 0$.  As $\eps \to 0$, 
\[ \int_{\mathsf{b}_\ieps} B_\ieps^5 G_a(x_\ieps, \cdot) \diff x = \mu_\ieps^{-1/2} - 4 \pi \sqrt 3 \phi_a(x_\ieps) \mu_\ieps^{1/2} + 3 a(x_\ieps) \mu_\ieps^{3/2} + o(\mu_\eps^{3/2}). \]
\end{lemma}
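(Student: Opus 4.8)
The plan is to compute the integral by splitting $G_a(x_\ieps,\cdot)$ into its singular and regular parts and changing variables $x = x_\ieps + \mu_\ieps y$, so that $B_\ieps$ becomes a fixed bubble and the scales $\mu_\ieps$ are made explicit.

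\textbf{Step 1: Set-up and splitting.} Write $G_a(x_\ieps, x) = \frac{1}{4\pi|x-x_\ieps|} - H_a(x_\ieps, x)$, so that
\[ \int_{\mathsf{b}_\ieps} B_\ieps^5 G_a(x_\ieps, \cdot) \diff x = \frac{1}{4\pi}\int_{\mathsf{b}_\ieps} \frac{B_\ieps^5(x)}{|x-x_\ieps|} \diff x - \int_{\mathsf{b}_\ieps} B_\ieps^5(x) H_a(x_\ieps, x) \diff x. \]
For both pieces I would substitute $x = x_\ieps + \mu_\ieps y$, using $B_\ieps(x) = \mu_\ieps^{-1/2} B(y)$ and $\diff x = \mu_\ieps^3 \diff y$, so the domain becomes $B(0, \delta_0/\mu_\ieps)$.

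\textbf{Step 2: The singular part.} After rescaling,
\[ \frac{1}{4\pi}\int_{\mathsf{b}_\ieps} \frac{B_\ieps^5}{|x-x_\ieps|} \diff x = \frac{\mu_\ieps^{-1/2}}{4\pi}\int_{B(0,\delta_0/\mu_\ieps)} \frac{B(y)^5}{|y|} \diff y. \]
Since $B(y)^5 \sim |y|^{-5}$ at infinity, $B^5/|y|$ is integrable on $\R^3$, and $\int_{\R^3} \frac{B^5}{|y|}\diff y$ equals, by the equation $-\Delta B = B^5$ and the fact that $\frac{1}{4\pi|y|}$ is the fundamental solution, exactly $4\pi B(0) = 4\pi$. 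The tail $\int_{|y|>\delta_0/\mu_\ieps} \frac{B^5}{|y|}\diff y = O(\mu_\ieps^3)$ is negligible. This yields the leading term $\mu_\ieps^{-1/2}$. However, to get the $\mu_\ieps^{3/2}$ term I cannot stop here: I must keep the next order, and the cleanest route is \emph{not} to split but to expand $G_a(x_\ieps, x_\ieps+\mu_\ieps y) = \frac{1}{4\pi\mu_\ieps|y|} - \phi_a(x_\ieps) - \nabla_x H_a(x_\ieps,x_\ieps)\cdot \mu_\ieps y + O(\mu_\ieps^2|y|^2)$, where the $C^{1,\sigma}_\loc$ regularity of $a$ (hence of $H_a(x_\ieps,\cdot)$ near $x_\ieps$) justifies the first-order Taylor expansion with a $\mu_\ieps^{1+\sigma}|y|^{1+\sigma}$ remainder.

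\textbf{Step 3: Collecting orders.} Plugging this expansion into $\int_{\mathsf{b}_\ieps} B_\ieps^5 G_a \diff x$ and rescaling gives
\[ \mu_\ieps^{-1/2}\cdot\frac{1}{4\pi}\int \frac{B^5}{|y|}\diff y \;-\; \phi_a(x_\ieps)\mu_\ieps^{1/2}\int B^5 \diff y \;-\; \mu_\ieps^{3/2}\nabla_x H_a(x_\ieps,x_\ieps)\cdot\int y\, B^5\diff y \;+\; (\text{remainder}). \]
The first term is $\mu_\ieps^{-1/2}$; the second is $-4\pi\sqrt3\,\phi_a(x_\ieps)\mu_\ieps^{1/2}$ since $\int_{\R^3} B^5 = 4\pi\sqrt3$; the third vanishes because $B$ is radial so $\int y\,B^5(y)\diff y = 0$. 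The $\mu_\ieps^{3/2}$ term claimed in the lemma, $3a(x_\ieps)\mu_\ieps^{3/2}$, does \emph{not} come from the Taylor expansion of $G_a$ — it arises from the next correction to the singular part: one must expand $\int_{B(0,\delta_0/\mu_\ieps)}\frac{B^5}{|y|}\diff y = 4\pi - c\,\mu_\ieps^2 + \dots$? No — rather, the correct bookkeeping is that the integral of the singular part over the \emph{truncated} ball, combined with the fact that $B$ solves $-\Delta B = B^5$ only, must be redone carefully: integrating $-\Delta B = B^5$ against $\frac{1}{4\pi|y|}$ over $B(0,\rho)$ with $\rho = \delta_0/\mu_\ieps$ and using Green's identity produces boundary terms of size $\rho^{-1} = \mu_\ieps/\delta_0$, i.e. $O(\mu_\ieps)$ after multiplying by $\mu_\ieps^{-1/2}$, hence $O(\mu_\ieps^{1/2})$, still too large to ignore; these must be tracked and will contribute, together with the $\phi_a$ term, and the genuine $\mu_\ieps^{3/2}$ correction comes from expanding $B_\ieps$ itself is exact, so it must come from the term I dismissed. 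I would therefore recompute Step 2 keeping the boundary contribution: $\int_{B(0,\rho)}\frac{B^5}{|y|}\diff y = 4\pi - \int_{\partial B(0,\rho)}\left(B\partial_n\frac{1}{|y|} - \frac{1}{|y|}\partial_n B\right)$, and using $B(y) = \sqrt3\,|y|^{-1} + O(|y|^{-3})$, $\partial_n B = -\sqrt3\,|y|^{-2} + O(|y|^{-4})$ on $|y|=\rho$, the boundary integral is $O(\rho^{-2}) = O(\mu_\ieps^2)$, hence negligible at order $\mu_\ieps^{-1/2}\cdot\mu_\ieps^2 = \mu_\ieps^{3/2}$.

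\textbf{Step 4: Locating the $3a(x_\ieps)\mu_\ieps^{3/2}$ term.} Given the above, the $\mu_\ieps^{3/2}$ term must instead come from second order in the Taylor expansion of $H_a$, i.e. from $-\frac12\langle \mu_\ieps y, D^2_x H_a(x_\ieps,x_\ieps)\,\mu_\ieps y\rangle$, giving $-\frac{\mu_\ieps^{3/2}}{2}\sum_{k,l}\partial^2_{x_k x_l}H_a(x_\ieps,x_\ieps)\int y_k y_l B^5\diff y = -\frac{\mu_\ieps^{3/2}}{6}\Delta_x H_a(x_\ieps,x_\ieps)\int |y|^2 B^5\diff y$ by radiality. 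Since $-\Delta_x H_a(x,y) + a(x)H_a(x,y) = a(x)\frac{1}{4\pi|x-y|}$ away from... actually $-\Delta_x H_a(x,\cdot)$ relates to $a$ via the equation for $G_a$: from $-\Delta_x G_a + aG_a = \delta_y$ one gets $-\Delta_x H_a(x,y) = -a(x)G_a(x,y)$ for $x\neq y$, so $-\Delta_x H_a(x_\ieps, x_\ieps) = -a(x_\ieps)\phi_a(x_\ieps) + (\text{correction})$; more directly, $\Delta_x[\text{regular part}]$ at the diagonal is governed by $a(x_\ieps)$ times a constant. Computing $\int_{\R^3}|y|^2 B^5\diff y$ and matching constants should produce exactly $3a(x_\ieps)$. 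The main obstacle is precisely this bookkeeping at order $\mu_\ieps^{3/2}$: identifying which term carries the $a(x_\ieps)$ coefficient, justifying the second-order Taylor expansion of $H_a(x_\ieps,\cdot)$ at the diagonal (which needs $a \in C^{1,\sigma}_\loc$, giving $H_a(x_\ieps,\cdot) \in C^{2,\sigma}_\loc$ near $x_\ieps$ by elliptic regularity applied to $-\Delta_x H_a = -aG_a$), controlling the remainder $\int B^5 \cdot \mu_\ieps^{2+\sigma}|y|^{2+\sigma}\diff y = O(\mu_\ieps^{2+\sigma}\mu_\ieps^{-1/2}) = o(\mu_\ieps^{3/2})$ since $\int|y|^{2+\sigma}B^5 < \infty$, and the boundary/tail terms from truncation which I argued are $O(\mu_\ieps^{3/2}\cdot\mu_\ieps^{1/2})$ or smaller. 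Once the constant $\int_{\R^3}|y|^2 B(y)^5\diff y$ is evaluated explicitly and combined with the identity for $\Delta_x H_a$ at the diagonal, the claimed formula follows.
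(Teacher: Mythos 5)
Your Steps 1--3 correctly produce the terms $\mu_\ieps^{-1/2}$ and $-4\pi\sqrt 3\,\phi_a(x_\ieps)\mu_\ieps^{1/2}$, but Step 4 rests on a false premise and cannot produce the coefficient $3a(x_\ieps)$. The regular part $H_a(x_\ieps,\cdot)$ is \emph{not} $C^{2,\sigma}$ near the diagonal --- when $a(x_\ieps)\neq 0$ it is not even differentiable there. Indeed it solves $-\Delta_z H_a(x_\ieps,z)=-a(z)G_a(x_\ieps,z)$, whose right-hand side blows up like $-a(x_\ieps)/(4\pi|z-x_\ieps|)$, and the particular solution generated by this singular source is the Lipschitz cone $-\tfrac{a(x_\ieps)}{8\pi}|z-x_\ieps|$. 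The correct local expansion, which is \cite[Lemma B.2]{Frank2021b} and is what the paper's proof invokes, reads
\[
G_a(x_\ieps,z)=\frac{1}{4\pi|z-x_\ieps|}-\phi_a(x_\ieps)-\tfrac12\nabla\phi_a(x_\ieps)\cdot(z-x_\ieps)+\frac{a(x_\ieps)}{8\pi}|z-x_\ieps|+\mathcal O\bigl(|z-x_\ieps|^{1+\nu}\bigr).
\]
The cone term is of first order in $|z-x_\ieps|$ but radial and even, so unlike the genuine gradient term it does \emph{not} cancel against the radial weight $B_\ieps^5$; it is exactly what carries the $a(x_\ieps)$ coefficient, via
$\tfrac{a(x_\ieps)}{8\pi}\int_{\mathsf{b}_\ieps}|z-x_\ieps|B_\ieps^5\diff z=\tfrac{a(x_\ieps)}{8\pi}\mu_\ieps^{3/2}\bigl(\int_{\R^3}|y|B^5\diff y+o(1)\bigr)=3a(x_\ieps)\mu_\ieps^{3/2}+o(\mu_\eps^{3/2})$, using $\int_{\R^3}|y|B(y)^5\diff y=24\pi$. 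A first-order Taylor expansion of $H_a$ at the diagonal misses this term entirely because it is not seen by differentiation, and a second-order expansion does not exist.

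Even granting twice differentiability, your bookkeeping at order $\mu_\ieps^{3/2}$ fails: $|y|^2B(y)^5\sim|y|^{-3}$ at infinity, so $\int_{\R^3}|y|^2B^5\diff y$ diverges logarithmically, and likewise $\int_{\R^3}|y|^{2+\sigma}B^5\diff y=\infty$, so the remainder estimate you assert is not available (over the truncated ball that integral grows like $\mu_\ieps^{-\sigma}$, leaving only $\mathcal O(\mu_\eps^{3/2})$ rather than $o(\mu_\eps^{3/2})$). With the expansion above the remainder is $\mathcal O(|z-x_\ieps|^{1+\nu})$ with $\nu<1$, for which $\int_{\mathsf{b}_\ieps}B_\ieps^5|z-x_\ieps|^{1+\nu}\diff z\lesssim\mu_\eps^{3/2+\nu}=o(\mu_\eps^{3/2})$ does converge; this is the estimate the paper uses. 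So the missing ingredient is precisely the non-smooth diagonal expansion of $G_a$ with its $|z-x_\ieps|$ term; once that is in hand, the proof reduces to the term-by-term integration you already set up in Steps 1--3.
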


\begin{proof}
Under our assumptions on $a$, \cite[Lemma B.2]{Frank2021b} asserts that
\begin{equation}
\label{Ha expansion}
G_a(y,z) = \frac{1}{4\pi |z-y|} -  \phi_a(y)  - \frac 12 \nabla \phi_a(y)\cdot (z-y) + \frac{a(y)}{8 \pi}|z-y| + \mathcal O(|z-y|^{1+\nu}),
\end{equation} 
for every $0 <\nu < 1$.
Recall ${\mathsf{b}_\ieps} = B(x_\ieps, \delta_0)$ with $\delta_0 > 0$ independent of $\eps$, and pick $y = x_\ieps$ in \eqref{Ha expansion}. We compute
\begin{align*}
\int_{\mathsf{b}_\ieps} \frac{1}{4\pi |z-x_\ieps|} B_\ieps^5 \diff z = \frac{1}{4 \pi} \mu_\ieps^{-1/2} \int_{B(0, \mu_\ieps^{-1} \delta_0)} B^5 \frac{1}{|z|} \diff z = \mu_\ieps^{-1/2} + o(\mu_\ieps^{3/2}).
\end{align*}
Here we used that 
\[ \int_{\R^3} \frac{1}{|x|} \frac{1}{(1+ \frac{|x|^2}{3})^{5/2}} \diff x = 6 \pi \int_0^\infty \frac{1}{(1 + s)^{5/2}} \diff s = 6 \pi B(1, 3/2) = 4 \pi, \]
where $B(a,b) = \frac{\Gamma(a)\Gamma(b)}{\Gamma(a+b)}$ is the Beta function. 

Next, 
\begin{align*}
 \phi_a(x_\ieps) \int_{\mathsf{b}_\ieps} B_\ieps^5 \diff z &= \mu_\ieps^{1/2} \phi_a(x_\ieps) \mu_\ieps^{1/2}  \int_{B(0, \mu_\ieps^{-1} \delta_0)} B^5 \diff z \\
 &= 4 \pi \sqrt 3 \phi_a(x_\ieps) \mu_\ieps^{1/2} + o(\mu_\eps^{3/2}). 
\end{align*}
Here we used that 
\[ \int_{\R^3} \frac{1}{(1+ |x|^2/3)^{5/2}} \diff x = 6 \pi \sqrt 3 \int_0^\infty \frac{s^{1/2}}{(1 + s)^{5/2}} \diff s = 6 \pi \sqrt 3 B(3/2, 1) = 4 \pi \sqrt 3. \]

By antisymmetry of the integrand,
 \[ \int_{\mathsf{b}_\ieps} \nabla \phi_a(x_\ieps) \cdot (z-x_\ieps) B_\ieps^5 \diff z = 0. \]
Next, 
\begin{align*}
\frac{a(x_\ieps)}{8 \pi} \int_{\mathsf{b}_\ieps} |z - x_\ieps| B_\ieps^5 \diff x &= \frac{a(x_\ieps)}{8 \pi} \mu_\ieps^{3/2} \int_{B_{\mu_\ieps^{-1} \delta}(0)} B^5 |x| \diff x \\
&= 3 a(x_\ieps) \mu_\ieps^{3/2} + o(\mu_\ieps^{3/2}).
\end{align*}
Here we used that 
\[ \int_{\R^3} |x| \frac{1}{(1+ |x|^2/3)^{5/2}} \diff x = 18 \pi \int_0^\infty \frac{s}{(1 + s)^{5/2}} \diff s = 18 \pi B(2, 1/2) = 24 \pi. \]
Finally, 
\[ \int_\Omega B_\ieps^5 |z-x_\ieps|^{1 + \nu} \diff x \lesssim \mu_\eps^{\frac{3}{2} + \nu} = o(\mu_\eps^{3/2}). \]
Combining all of the above, the lemma follows. 
\end{proof}

\begin{lemma}
\label{lemma int B^5 nabla Ga}
Let $a \in C(\overline{\Omega}) \cap C^{1, \sigma}_\text{loc}(\Omega)$ for some $\sigma > 0$. As $\eps \to 0$, we have 
\[ \int_{\mathsf{b}_\ieps} B_\ieps^5 \nabla_x G_a(x_\ieps,z) \diff z =  - 2 \pi \sqrt 3 \nabla \phi_a(x_\ieps) \mu_\ieps^{1/2} + \mathcal O(\mu_\eps^{\frac{1}{2} + \nu}), \]
for every $0 < \nu < 1$. 
\end{lemma}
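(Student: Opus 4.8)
The plan is to follow closely the proof of Lemma~\ref{lemma int B^5 Ga}, replacing the expansion \eqref{Ha expansion} of $G_a$ near the diagonal by its gradient counterpart. First I would differentiate \eqref{Ha expansion} with respect to its first variable — which is legitimate either by the arguments underlying \cite[Lemma B.2]{Frank2021b}, or directly from elliptic regularity applied to $-\Delta_x H_a(x,z) + a(x)H_a(x,z) = \tfrac{a(x)}{4\pi|x-z|}$ — to obtain, for $x$ close to $z$ and every $0<\nu<1$,
\begin{equation*}
\nabla_x G_a(x,z) = \frac{z-x}{4\pi|z-x|^3} - \tfrac12\nabla\phi_a(x) - \frac{a(x)}{8\pi}\,\frac{z-x}{|z-x|} + \mathcal O(|z-x|^\nu),
\end{equation*}
where the $\mathcal O(|z-x|^\nu)$ collects the differentiated error term of \eqref{Ha expansion} together with the contributions of size $\mathcal O(|z-x|)$ coming from the terms involving $D^2\phi_a$ and $\nabla a$ (if $\phi_a$ is only $C^1$, the Hessian contribution is simply part of the differentiated error term). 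Note that the only term on the right-hand side which is of size $\mathcal O(1)$ but is not the constant $-\tfrac12\nabla\phi_a(x)$ is the one involving $\tfrac{z-x}{|z-x|}$.

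Next I would put $x=x_\ieps$, multiply by $B_\ieps^5$, integrate over $\mathsf{b}_\ieps = B(x_\ieps,\delta_0)$, and treat the contributions separately, using the rescaling $z = x_\ieps + \mu_\ieps w$. The two odd contributions vanish identically: since $B_\ieps^5$ is radial about $x_\ieps$ while $\tfrac{z-x_\ieps}{|z-x_\ieps|^3}$ and $\tfrac{z-x_\ieps}{|z-x_\ieps|}$ are odd about $x_\ieps$,
\begin{equation*}
\int_{\mathsf{b}_\ieps} B_\ieps^5(z)\,\frac{z-x_\ieps}{4\pi|z-x_\ieps|^3}\diff z = \int_{\mathsf{b}_\ieps} B_\ieps^5(z)\,\frac{z-x_\ieps}{|z-x_\ieps|}\diff z = 0.
\end{equation*}
The constant term supplies the main contribution. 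Using $\int_{\R^3}B^5 = 4\pi\sqrt3$ (as in Lemma~\ref{lemma int B^5 Ga}) together with the tail bound $\int_{|w|>R}B^5 = \mathcal O(R^{-2})$, one has $\int_{\mathsf{b}_\ieps}B_\ieps^5\diff z = 4\pi\sqrt3\,\mu_\ieps^{1/2} + \mathcal O(\mu_\eps^{5/2})$, so
\begin{equation*}
-\tfrac12\nabla\phi_a(x_\ieps)\int_{\mathsf{b}_\ieps}B_\ieps^5\diff z = -2\pi\sqrt3\,\nabla\phi_a(x_\ieps)\,\mu_\ieps^{1/2} + \mathcal O(\mu_\eps^{5/2}).
\end{equation*}

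Finally I would bound the error term: after the same rescaling, $\int_{\mathsf{b}_\ieps}B_\ieps^5(z)|z-x_\ieps|^\nu\diff z = \mu_\ieps^{1/2+\nu}\int_{B(0,\delta_0/\mu_\ieps)}B^5(w)|w|^\nu\diff w \lesssim \mu_\eps^{1/2+\nu}$, since $\int_{\R^3}B^5|w|^\nu\diff w < \infty$ for $0<\nu<1$. Collecting the three pieces and absorbing $\mu_\eps^{5/2}$ into $\mathcal O(\mu_\eps^{1/2+\nu})$ (valid for every $\nu<1$ since $\mu_\eps\to 0$) yields the claimed identity. The only mildly technical point is the first step — justifying the gradient expansion of $G_a$ near the diagonal to the stated precision, i.e. that the differentiated error of \eqref{Ha expansion} is genuinely of order $|z-x|^\nu$ for every $\nu<1$ — but this is contained in, or follows from the same elliptic estimates as, \cite[Lemma B.2]{Frank2021b}; everything afterwards is the parity observation together with the elementary scaling estimates already employed in Lemma~\ref{lemma int B^5 Ga}.
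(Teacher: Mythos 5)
Your proposal is correct and follows essentially the same route as the paper: both cite the gradient version of the near-diagonal expansion of $G_a$ from \cite[Lemma B.2]{Frank2021b} (acknowledging that one cannot simply differentiate \eqref{Ha expansion} termwise without the underlying elliptic estimates), kill the two odd terms by parity over the ball $\mathsf{b}_\ieps$ centered at $x_\ieps$, and extract the leading term from $-\tfrac12\nabla\phi_a$ times $\int_{\mathsf{b}_\ieps}B_\ieps^5 = 4\pi\sqrt3\,\mu_\ieps^{1/2}+o(\cdot)$, with the remainder controlled by $\int B_\ieps^5|z-x_\ieps|^\nu \lesssim \mu_\eps^{1/2+\nu}$. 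The only cosmetic difference is that the paper's expansion evaluates the coefficients at the integration variable $z$ rather than at $x_\ieps$, which costs an extra (harmless) Hölder-continuity step that your version avoids.
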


\begin{proof}
The argument in \cite[Lemma B.2]{Frank2021b} in fact also shows
\[ 
\nabla_x G_a(y,z) = \frac{y-z}{4\pi |y-z|^3} - \frac 12 \nabla \phi_a(z)  + \frac{a(z)}{8 \pi} \frac{y-z}{|y-z|} + \mathcal O(|y-z|^{\nu}),
\]
for every $0 < \nu < 1$. Picking $y = x_\ieps$, and observing the cancellations by antisymmetry, this identity gives 
\begin{align*}
&\qquad \int_{\mathsf{b}_\ieps} B_\ieps^5(z) \nabla_x G_a(x_\ieps, z) \diff z \\
&= - \frac 12  \int_{\mathsf{b}_\ieps} B_\ieps^5(z) \nabla \phi_a(z) \diff z + \mathcal O \left( \int_{\mathsf{b}_\ieps}  B_\ieps^5(z) |z - x_\ieps|^\nu \diff z \right) \\
&= - 2 \pi \sqrt 3 \nabla \phi_a(x_\ieps)+ \mathcal O(\mu_\eps^{\frac 12  + \nu}). 
\end{align*}
This is the assertion. 
\end{proof}

\begin{lemma}
\label{lemma nabla phi a + eps V}
Let $\eps > 0$, $a \in C(\overline{\Omega})$ and $V \in C^(\overline{\Omega}) \cap C^{0,\sigma}_\text{loc}(\Omega)$ for some $sigma \in (0,1)$ be such that the Green's functions $G_a$ and $G_{a + \eps V}$ exist. Then 
\begin{equation}
\label{phi a resolvent}
\phi_{a + \eps V}(x) - \phi_a(x) = \eps \int_\Omega G_a(x,y)^2 V(y) \diff y + \mathcal O(\eps)
\end{equation} 
and 
\begin{align}
\label{nabla phi a resolvent}
\nabla \phi_{a + \eps V}(x) - \nabla \phi_a(x) &= \mathcal O(\eps).
\end{align}
The bounds are uniform for $x$ in compact subsets of $\Omega$. 
\end{lemma}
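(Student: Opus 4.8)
The plan is to derive both identities from the second resolvent identity for the Green's functions, and then to differentiate the resulting formula by hand. First I would set $w_\eps(x,y) := G_a(x,y) - G_{a+\eps V}(x,y)$. Since $G_a$ and $G_{a+\eps V}$ share the singular part $\tfrac{1}{4\pi|x-y|}$, we have $w_\eps = H_{a+\eps V} - H_a$, so $w_\eps$ extends continuously to the diagonal with $w_\eps(x,x) = \phi_{a+\eps V}(x) - \phi_a(x)$. Rewriting the equation defining $G_{a+\eps V}(\cdot,y)$ as $(-\Delta + a)G_{a+\eps V}(\cdot,y) = \delta_y - \eps V\, G_{a+\eps V}(\cdot,y)$ and applying $(-\Delta+a)^{-1}$ yields
\begin{equation*}
w_\eps(x,y) = \eps \int_\Omega G_a(x,z)\,V(z)\,G_{a+\eps V}(z,y)\,\diff z ,
\end{equation*}
and, evaluating at $y=x$ and using the symmetry of $G_{a+\eps V}$, $\phi_{a+\eps V}(x) - \phi_a(x) = \eps\,\Phi_\eps(x)$ with $\Phi_\eps(x) := \int_\Omega G_a(x,y)\,V(y)\,G_{a+\eps V}(x,y)\,\diff y$.

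For \eqref{phi a resolvent} I would use the standard uniform bound $0 < G_b(x,y) \le C|x-y|^{-1}$, with $C$ depending only on $\Omega$, $\|b\|_{L^\infty}$ and the coercivity constant, hence applicable to $b=a$ and, for $\eps$ small, to $b = a+\eps V$, together with $\sup_{x,y}\int_\Omega |x-z|^{-1}|z-y|^{-1}\,\diff z < \infty$. These give $|w_\eps| \lesssim \eps$ uniformly, and substituting $G_{a+\eps V}(z,y) = G_a(z,y) - w_\eps(z,y)$ back into the formula above gives $\phi_{a+\eps V}(x) - \phi_a(x) = \eps\int_\Omega G_a(x,y)^2 V(y)\,\diff y + \mathcal O(\eps^2)$, which is \eqref{phi a resolvent} (indeed with the stronger remainder). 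All bounds are uniform for $x$ in compact subsets of $\Omega$.

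The real work is \eqref{nabla phi a resolvent}. Since $\phi_a$ and $\phi_{a+\eps V}$ are $C^1$ under the regularity assumptions in force (cf. the expansion \eqref{Ha expansion} and \cite[Lemma B.2]{Frank2021b}), it suffices to bound $\sup_{x \in K}|\nabla\Phi_\eps(x)|$ uniformly in small $\eps$ for each compact $K \Subset \Omega$. One cannot differentiate $\Phi_\eps$ naively under the integral: the $x$-gradient of $G_a(x,y)\,G_{a+\eps V}(x,y)$ carries a non-integrable $|x-y|^{-3}$ singularity at $y=x$. The fix is to peel off the explicit singularities via $G_b(x,y) = \tfrac{1}{4\pi|x-y|} - H_b(x,y)$: on a ball $B(x,\delta)\Subset\Omega$, with $\delta$ a fixed fraction of $\dist(K,\partial\Omega)$, write
\begin{equation*}
G_a(x,y)\,G_{a+\eps V}(x,y) = \frac{1}{(4\pi)^2|x-y|^2} - \frac{(H_a+H_{a+\eps V})(x,y)}{4\pi|x-y|} + H_a(x,y)\,H_{a+\eps V}(x,y),
\end{equation*}
the contribution from $\Omega\setminus B(x,\delta)$ being harmless since there the kernels are smooth in $x$. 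For the first term, $\int_{B(x,\delta)}V(y)|x-y|^{-2}\,\diff y = \int_{B(0,\delta)}V(x+z)|z|^{-2}\,\diff z$, so the derivative can be transferred onto $V\in C^1$, producing a bound $\lesssim \|\nabla V\|_{L^\infty}\,\delta$. For the second term the $x$-derivative either hits $|x-y|^{-1}$, producing an integrable $|x-y|^{-2}$ kernel against the bounded factor $V(H_a+H_{a+\eps V})$, or it hits $H_b(x,\cdot)$, which I would control by interior elliptic regularity for $(-\Delta+b)H_b(\cdot,y) = \tfrac{b}{4\pi|\cdot-y|}\in L^p$ ($p<3$) together with \eqref{Ha expansion}, getting $\nabla_x H_b(x,\cdot)\in L^q$ for $q$ large, uniformly in $y$ and in small $\eps$, and then pairing with $|x-y|^{-1}\in L^{q'}$, $q'<3$, by Hölder. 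The third term is differentiated under the integral directly, the gradient landing on one bounded, interior-$C^1$ factor. Summing up gives $|\nabla\Phi_\eps(x)|\le C_K$ uniformly, hence $\nabla\phi_{a+\eps V}(x)-\nabla\phi_a(x)=\eps\,\nabla\Phi_\eps(x)=\mathcal O(\eps)$ on $K$, which proves Lemma~\ref{lemma nabla phi a + eps V}.

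The main obstacle, and essentially the only non-routine point, is this last differentiation step: after differentiating, the integrand of $\Phi_\eps$ is no longer locally integrable, and one must first extract the explicit $|x-y|^{-1}$ singularities of the two Green's functions and move the surviving derivative either onto $V$ or onto the regular parts $H_b$ — the latter being controlled, uniformly in $\eps$, by interior elliptic estimates and the known expansion of $G_a$.
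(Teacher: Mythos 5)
Your argument is essentially the paper's: the resolvent identity gives \eqref{phi a resolvent} (indeed with remainder $\mathcal O(\eps^2)$), and for \eqref{nabla phi a resolvent} one peels off the $\tfrac{1}{4\pi|x-y|}$ singularities of both Green's functions and confronts the single non-integrable term $\nabla_x\int_{B}|x-y|^{-2}V(y)\,\diff y$. The one point where you deviate is in handling that term: you change variables and put the derivative on $V$, which requires $V\in C^1_{\mathrm{loc}}$ and a bound on $\|\nabla V\|_{L^\infty}$, whereas the lemma is stated for $V\in C^{0,\sigma}_{\mathrm{loc}}$ only. The paper instead exploits the antisymmetry $\int_{B(x,d)}\frac{x-y}{|x-y|^4}\,\diff y=0$ to replace $V(y)$ by $V(y)-V(x)$ and then uses only the H\"older bound $|V(x)-V(y)|\lesssim|x-y|^{\sigma}$, yielding the integrable kernel $|x-y|^{-3+\sigma}$. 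Your version is fine for the paper's standing assumption $V\in C^1(\Omega)$, but to prove the lemma as stated you should substitute the antisymmetrization trick for the $\|\nabla V\|_{L^\infty}$ step; the rest of your treatment of the cross terms and of $\nabla_x H_b$ is consistent with what the paper absorbs into its $\mathcal O(1)$.
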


\begin{proof}
By the resolvent formula, we have
\begin{equation}
\label{resolvent general}
H_{a + \eps V}(x,y) -H_a(x,y) = G_a(x,y) - G_{a + \eps V}(x,y) = \eps \int_\Omega G_a(x,z) V(z) G_{a + \eps V}(z,y) \diff z. 
\end{equation} 
In particular, $G_{a + \eps V}(x,y) = G_a(x,y) + \mathcal O(\eps)$. Plugging this back into the right side of \eqref{resolvent general} and evaluating at $x = y$ gives \eqref{phi a resolvent}. 

To prove \eqref{nabla phi a resolvent}, some more care needs to be taken because the derivative of the integrand in \eqref{phi a resolvent} behaves like $|x-y|^{-3}$, which is not integrable. To overcome this issue, we use the regularity of $V$. Indeed, by decomposing $G_a(x,y) = \frac{1}{4\pi |x-y|} - H_a(x,y)$, it is easy to see that 
\[ \nabla_x \int_\Omega G_a(x,y) G_{a + \eps V} V(y) \diff y = \nabla_x \int_{B_d} |x-y|^{-2} V(y) \diff y + \mathcal O(1), \]
where $B_d$ is a ball of radius $d = \text{dist}(x, \partial \Omega)$ around $x$. But 
\begin{align*}
&\qquad \left| \nabla_x \int_{B_d} |x-y|^{-2} V(y) \diff y \right| \\
 & = 2 \left| \int_{B_d} \frac{x-y}{|x-y|^4} V(y) \diff y \right| =   2 \left| \int_{B_d} \frac{x-y}{|x-y|^4} (V(x) - V(y)) \diff y \right| \\
&\lesssim \int_{B_d} |x-y|^{-3 + \sigma} \diff y < \infty. 
\end{align*} 
The differentation under the integral in this computation is slightly formal because $ \frac{x-y}{|x-y|^4}$ is not absolutely integrable, but can easily be made rigorous using a cutoff around the singularity. We omit the details. 
\end{proof}

\begin{lemma}
\label{lemma integral w0}
Let $W$ be the unique radial solution to 
\[ -\Delta W - 5 W B^4 = -B, \qquad W(0) = \nabla W(0) = 0. \]
Then we have 
\[ 5 \int_{\R^3} \frac{W(x) B(x)^4}{|x|} \diff x = 12 \pi   (\pi - 1). \]
\end{lemma}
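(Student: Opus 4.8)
The plan is to convert the identity into a statement about the far-field behaviour of $W$ by testing the defining equation $-\Delta W - 5WB^4 = -B$ against the Newtonian kernel $|x|^{-1}$. Since $5WB^4 = B - \Delta W$, for every $R>0$
\[
5\int_{B(0,R)} \frac{WB^4}{|x|}\diff x = \int_{B(0,R)} \frac{B}{|x|}\diff x - \int_{B(0,R)} \frac{\Delta W}{|x|}\diff x .
\]
Green's identity on $B(0,R)$ (with a vanishing small ball around the origin), together with $-\Delta |x|^{-1} = 4\pi\delta_0$ and $W(0)=0$, gives
\[
\int_{B(0,R)} \frac{\Delta W}{|x|}\diff x = 4\pi\bigl(R\,\partial_r W(R)+W(R)\bigr),
\]
where $\partial_r W$ denotes the radial derivative. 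Because $WB^4/|x|$ is integrable on $\R^3$ — it is $O(|x|)$ near $0$ by $W(0)=\nabla W(0)=0$ and $O(|x|^{-4})$ at infinity — the left-hand side converges as $R\to\infty$, so it remains to track the two terms on the right.

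The first term is elementary: $\int_{B(0,R)} B/|x|\,\diff x = 4\pi\int_0^R r(1+r^2/3)^{-1/2}\diff r = 12\pi\bigl((1+R^2/3)^{1/2}-1\bigr) = 4\sqrt3\,\pi R - 12\pi + O(1/R)$. For the second term one needs the expansion
\[
W(r) = \tfrac{\sqrt3}{2}\,r - 3\pi + o(1), \qquad \partial_r W(r) = \tfrac{\sqrt3}{2} + o(1/r) \qquad (r\to\infty),
\]
which yields $4\pi\bigl(R\,\partial_r W(R)+W(R)\bigr) = 4\pi\bigl(\sqrt3\,R - 3\pi\bigr) + o(1)$. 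Granting this, the linearly growing terms $4\sqrt3\,\pi R$ cancel in the limit and one gets $5\int_{\R^3} WB^4/|x|\,\diff x = -12\pi - 4\pi(-3\pi) = 12\pi(\pi-1)$, as claimed. To prove the expansion one uses the explicit form of $W$ from Lemma \ref{lemma w0 variation of constants}, obtained by variation of constants: the radial solutions of the homogeneous equation $-\Delta v = 5B^4 v$ are $Z_0(r)=\partial_\mu\bigl(\mu^{-1/2}B(\cdot/\mu)\bigr)\big|_{\mu=1} = \tfrac{r^2-3}{6(1+r^2/3)^{3/2}}\sim \tfrac{\sqrt3}{2r}$ and a second solution $\widetilde Z_0$ (reduction of order) with $\widetilde Z_0(r)\sim 2/r$ as $r\to0$ and $\widetilde Z_0(r)\to 2/\sqrt3$ as $r\to\infty$; integrating the forcing $r^2B$ against these and fixing the constants so that $W(0)=\nabla W(0)=0$, one finds that apart from the two linear terms the only contribution surviving at infinity is $-3\sqrt3\,\widetilde Z_0(r)\arctan(r/\sqrt3)$, whose limit is $\tfrac{2}{\sqrt3}\cdot(-3\sqrt3)\cdot\tfrac{\pi}{2} = -3\pi$.

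The main obstacle is exactly this last point: pinning down the constant $-3\pi$ in the far-field expansion of $W$. Soft asymptotics (from the ODE alone) only give $W(r)=\tfrac{\sqrt3}{2}r + C_2 + o(1)$ for an undetermined $C_2$, so one is forced to work with the explicit solution, and to keep careful track of the integration constants — these are fixed by the regularity $W(0)=\nabla W(0)=0$ at the origin, since an error there amounts to adding a nonzero multiple of the non-decaying solution $\widetilde Z_0$ and would shift $C_2$. The factor $\pi$ in the final answer enters solely through $\lim_{r\to\infty}\arctan(r/\sqrt3)=\pi/2$. As an alternative that bypasses the boundary-term bookkeeping, one can instead compute $4\pi\int_0^\infty W(r)B(r)^4 r\,\diff r$ directly from the explicit formula for $W$ (the integral converges absolutely), integrating by parts once to deal with the $\arctan$ term; this is a longer but entirely routine calculation leading to the same value $12\pi(\pi-1)$.
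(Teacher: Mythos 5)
Your proof is correct and follows essentially the same route as the paper's: both test the equation against the Newtonian kernel on $B(0,R)$, reduce $\int_{B_R}\Delta W/|x|$ to the boundary data $4\pi\bigl(W(R)+RW'(R)\bigr)$ via Green's identity, extract the far-field expansion $W(R)=\tfrac{\sqrt3}{2}R-3\pi+o(1)$, $W'(R)=\tfrac{\sqrt3}{2}+o(R^{-1})$ from the explicit variation-of-constants formula of Lemma \ref{lemma w0 variation of constants}, and let the linearly growing terms cancel. Your identification of the constant $-3\pi$ through the $\arctan(r/\sqrt3)\to\pi/2$ limit is exactly how it arises in the paper's expansion of $\psi$.
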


\begin{proof}
By the equation, we have, for every $R > 0$,  
\begin{equation}
\label{w0 eq integrated}
5 \int_{B_R} \frac{W(x) B(x)^4}{|x|} \diff x = - \int_{B_R}  \frac{\Delta W}{|x|} \diff x +  \int_{B_R} \frac{B(x)}{|x|} \diff x. 
\end{equation} 
We need to compute the asymptotics as $R \to \infty$ of the two integrals on the right side. The second one is straightforward to evaluate. We have 
\begin{align}
\int_{B_R} \frac{B(x)}{|x|} \diff x &= 4 \pi \int_0^R \frac{r \diff r}{(1 + \frac{r^2}{3})^{1/2}} = 4 \pi \left[3(1 + \frac{r^2}{3})^{1/2}\right]^R_0  \nonumber \\
& = 12 \pi (1 + \frac{R^2}{3})^{1/2} - 12 \pi = 4 \pi \sqrt 3 R - 12 \pi + \mathcal O(R^{-1}) \label{B/x expansion}
\end{align}
as $R \to \infty$. To evaluate the first integral on the right side of \eqref{w0 eq integrated}, we integrate by parts. By Green's formula and since $W(0) = 0$, we have 
\begin{align}
 - \int_{B_R}  \frac{\Delta W}{|x|} \diff x &= \int_{\partial B_R} \left( W \frac{\partial}{\partial n} \frac{1}{|x|} - \frac{1}{|x|} \frac{\partial W}{\partial n} \right) \diff \sigma(x) \nonumber \\
 &= - 4\pi W(R) - 4\pi W'(R) R.  \label{w0 at R}
\end{align}
By Lemma \ref{lemma w0 variation of constants} below, we have 
\[ W(r) = v(r) \varphi(r), \]
where $v$ is the solution to the homogeneous equation $-\Delta v = 5 v B^4$ given by 
\begin{equation}
\label{v0 definition}
v(r) = \frac{3 - r^2}{(3 + r^2)^{3/2}} = - \frac{2}{\sqrt 3} \frac{\diff}{\diff \mu}|_{\mu = 1} B_{\mu,0}(r) 
\end{equation} 
and $\varphi(r) = \int_0^r \psi(s) \diff s$, with 
\begin{equation}
\label{psi definition}
\psi(r) =\sqrt 3 \left(-r + 2 \sqrt 3 \arctan \left( \frac{r}{\sqrt 3} \right) - \frac{3r}{r^2 + 3} \right) \frac{(3 + r^2)^3}{r^2(3-r^2)^2}. 
\end{equation} 
From these expressions, we can easily read off the asymptotic behavior of $W(R)$ and $W'(R)$ to the precision necessary to evaluate \eqref{w0 at R} as $R \to \infty$. Indeed, we have 
\[ \psi(R) = -  \sqrt 3 R + 3 \pi  + \mathcal O(R^{-1}), \]
hence 
\[ \varphi(R) = - \frac{ \sqrt 3}{2} R^2 + 3 \pi  R + \mathcal O(\ln R). \]
On the other hand, 
\[ v(R) = - R^{-1} + \mathcal O(R^{-3}), \]
which yields 
\[ W(R) = \frac{ \sqrt 3}{2} R - 3 \pi  + o(1). \]
Moreover, 
\[ v'(R) = 3R \frac{R^2 - 3}{(3 + R^2)^{5/2}} = R^{-2} + \mathcal O(R^{-4}) \]
and thus 
\begin{align*}
W'(R) &= \psi(R) v(R) + \varphi(R) v'(R)  \\
& = ( \sqrt 3 - 3  \pi R^{-1} ) + (\frac{- \sqrt 3 }{2} + 3 \pi  R^{-1} ) + o(R^{-1}) \\
& = \frac{ \sqrt 3 }{2} + o(R^{-1}). 
\end{align*} 
Inserting these expansions into \eqref{w0 at R} above, we obtain
\[  - \int_{B_R}  \frac{\Delta W}{|x|} \diff x = - 4 \pi \sqrt 3  R + 12 \pi^2  + o(1). \]
Coming back to \eqref{w0 eq integrated} and inserting this expansion as well as \eqref{B/x expansion}, the divergent terms in $R$ cancel and we get 
\[ 5 \int_{\R^3} \frac{W(x) B(x)^4}{|x|} \diff x = 5 \lim_{\R \to \infty} \int_{B_R} \frac{W(x) B(x)^4}{|x|} \diff x = 12 \pi   (\pi - 1), \]
as claimed.
\end{proof}

\begin{lemma}
\label{lemma w0 variation of constants}
Let $W$ be the unique radial solution to 
\begin{equation}
\label{w0 equation lemma}
 -\Delta W - 5 W B^4 = -B, \qquad W(0) = \nabla W(0) = 0. 
\end{equation}
Then $W$ is given by 
\begin{equation}
\label{w0 definition} W(r) = v(r) \int_0^r \psi(s) \diff s, 
\end{equation}
with $v$ as in \eqref{v0 definition} and $\psi$ as in \eqref{psi definition}. 
\end{lemma}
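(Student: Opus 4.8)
The plan is a reduction-of-order (variation of constants) argument. Since $W$ is radial, writing $r = |x|$ and $\Delta = \partial_r^2 + \frac 2r\partial_r$, equation \eqref{w0 equation lemma} becomes the linear ODE $-W'' - \frac 2r W' - 5B^4 W = -B$ on $(0,\infty)$, and the conditions $W(0) = \nabla W(0) = 0$ amount to requiring that $W$ extend smoothly across $r = 0$ with $W(0) = 0$ (for a radial $C^1$ function $W'(0) = 0$ is automatic). First I record that the function $v$ from \eqref{v0 definition} solves the \emph{homogeneous} equation $-\Delta v = 5B^4 v$: this follows by differentiating the identity $-\Delta B_{\mu,0} = B_{\mu,0}^5$ in $\mu$ at $\mu = 1$, since $v$ is (a constant multiple of) the dilation mode, cf. \eqref{psi 0 definition}.

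Next I substitute the ansatz $W = v\varphi$. Using $W' = v'\varphi + v\varphi'$ and $W'' = v''\varphi + 2v'\varphi' + v\varphi''$, the terms proportional to $\varphi$ combine into $-\varphi\,(v'' + \frac2r v' + 5B^4 v)$, which vanishes because $v$ solves the homogeneous equation. What remains is a first-order linear ODE for $\psi := \varphi'$, namely
$$ v\,\psi' + \Bigl(2v' + \tfrac 2r v\Bigr)\psi = B. $$
Multiplying by $r^2 v$ and noticing that $r^2 v^2\psi' + (2r^2 v v' + 2r v^2)\psi = (r^2 v^2\psi)'$, this integrates to
$$ r^2 v(r)^2\,\psi(r) = \int_0^r s^2\,v(s)\,B(s)\,\diff s + C $$
for some constant $C$. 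To pin down $C = 0$ I use the regularity of $W$ at the origin: since $v(0) = 1/\sqrt 3 \neq 0$, a nonzero $C$ would force $\psi(r) \sim 3C/r^2$, hence $\varphi(r) \sim -3C/r$ and $W = v\varphi$ would have a $1/r$ singularity at the origin, which is excluded; with $C = 0$ one checks conversely that $\psi(r) = \frac{r}{\sqrt 3} + \mathcal O(r^3)$, $\varphi(r) = \frac{r^2}{2\sqrt 3} + \mathcal O(r^4)$ and $W(r) = \frac{r^2}{6} + \mathcal O(r^4)$, so $W$ is indeed smooth with $W(0) = \nabla W(0) = 0$. (Uniqueness of the radial solution with these boundary conditions holds since the difference of two such solutions is a homogeneous solution regular at $r=0$ — the homogeneous radial ODE has a regular singular point there with indicial roots $0$ and $-1$ — hence a multiple of $v$, which must vanish since it vanishes at a point where $v$ does not.) This establishes \eqref{w0 definition} with $\psi(r) = \bigl(r^2 v(r)^2\bigr)^{-1}\int_0^r s^2 v(s) B(s)\,\diff s$.

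It remains to identify this $\psi$ with \eqref{psi definition}. Using $B(s) = \sqrt 3\,(3+s^2)^{-1/2}$ and $v(s) = (3-s^2)(3+s^2)^{-3/2}$, the integrand becomes $s^2 v(s) B(s) = \sqrt 3\,\frac{s^2(3-s^2)}{(3+s^2)^2}$; writing $s^2(3-s^2) = -(3+s^2)^2 + 9(1+s^2)$ and $1+s^2 = (3+s^2) - 2$ reduces the primitive to elementary terms (one $\arctan$ and one rational term), and one verifies by differentiation that
$$ \int_0^r \frac{s^2(3-s^2)}{(3+s^2)^2}\,\diff s = -r + 2\sqrt 3\,\arctan\!\Bigl(\tfrac{r}{\sqrt 3}\Bigr) - \frac{3r}{r^2+3}. $$
Since $r^2 v(r)^2 = \frac{r^2(3-r^2)^2}{(3+r^2)^3}$, dividing the integral by this quantity yields exactly \eqref{psi definition}. (The apparent double pole of $\psi$ at $r = \sqrt 3$ is harmless: it is compensated by the simple zero of $v$ there, so that $W = v\varphi$ stays smooth.)

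The computation is essentially bookkeeping; the only points requiring genuine care are the argument that the integration constant $C$ must vanish, which rests on the regularity of $W$ at the origin, and the explicit evaluation of the primitive above, which must be carried out cleanly to land precisely on \eqref{psi definition}.
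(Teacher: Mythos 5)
Your proof is correct and follows essentially the same route as the paper's: the reduction of order $W = v\varphi$ using the dilation mode $v$, followed by solving the resulting first-order equation for $\psi = \varphi'$ with integrating factor $r^2v^2$ (the paper phrases this as a second variation of constants with $\psi_0 = (r^2v^2)^{-1}$), and the same explicit evaluation of $\int_0^r s^2 v B\,\diff s$. Your treatment of the integration constant, the regularity at $r=0$ and at $r=\sqrt 3$, and the uniqueness is a bit more careful than the paper's sketch, but the argument is the same.
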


Notice that indeed $W'(0) = 3^{-1/2} \psi(0) = 0$. This is not directly obvious from the definition of $\psi$, but follows by noting that for 
\[ h(r):=  -r + 2 \sqrt 3 \arctan \left( \frac{r}{\sqrt 3} \right) - \frac{3r}{r^2 + 3} \]
one has $h(0) = h'(0) = h''(0) = 0$. This implies $h(r) = \mathcal O(r^3)$ and thus $\psi(r) = \mathcal O(r)$ as $r \to 0$.

It can of course be verified by straightforward computation that $W$ given by \eqref{w0 definition} solves \eqref{w0 equation lemma}. In the following proof we actually sketch how to find \eqref{w0 definition} using the method of the variation of constants.

\begin{proof}
Setting $W = v \varphi$ with the new unknown $\varphi$, solving \eqref{w0 equation lemma} becomes equivalent to solving 
\[ \varphi'' + 2 (\frac{1}{r} + \frac{v'}{v}) \varphi' = \frac{ B}{v}, \qquad \varphi(0) = \varphi'(0) = 0, \]
or, with $\varphi(r) = \int_0^r \psi(s) \diff s$ and $H:= 2 (\frac{1}{r} + \frac{v'}{v})$, 
\[ \psi' + H \psi = \frac{ B}{v}, \qquad \psi(0) =0, \]
To solve this first-order equation, we make a second time the variation of constants ansatz $\psi = \psi_0 \eta$, where $\psi'_0 + H \psi_0 = 0$ and $\eta$ needs to solve 
\[ \eta' =  \frac{B}{v \psi_0}, \quad \eta(0) = 0. \]
The solution $\psi_0$ can be determined directly as 
\[ \psi_0(r) = \exp \left(-\int_1^r H(s) \diff s\right) = \frac{1}{r^2 v^2}, \]
and hence
\[ \eta(r) =  \int_0^r B(s) s^2 v(s) \diff s = \sqrt 3 \int_0^r \frac{s^2 (3 - s^2)}{(3 + s^2)^2} \diff s = 3 \int_0^{r/\sqrt 3 } \frac{s^2 (1 - s^2)}{(1+ s^2)^2} \diff s. 
 \]
To evaluate this integral, we write 
\[ \frac{s^2 (1 - s^2)}{(1+ s^2)^2} = - 1 + \frac{3s^2 +1}{(1+s^2)^2} = - 1 + \frac{3}{1 + s^2} - \frac{2}{(1 + s^2)^2}. \]
It can be verified by direct computation that $(\arctan s + \frac{s}{s^2 +1} )' = \frac{2}{(1 + s^2)^2}$. From here, we can thus explicitly compute $\eta$, and via $\psi = \psi_0 \eta$ we easily obtain the claimed expression for $\psi$. 
\end{proof}

\begin{lemma}
\label{lemma Wjk}
For $j,k \in \{1,2,3\}$, there are functions $W_{jk}$ which satisfy
\begin{equation}
\label{w equation lemma}
\begin{cases}
-\Delta W_{jk} - 5 W_{jk} B^4 = 0 , \quad W_{jk}(x) = x_j x_k + o(|x|^2) \quad \text{ as } x \to 0 & \text{ if } j \neq k, \\
-\Delta W_{jj} - 5 W_{jj} B^4 = -B , \quad W_{jj}(x) = \frac{1}{2} x_j^2 + o(|x|^2) \quad \text{ as } x \to 0 & \text{ if } j = k.
\end{cases}
\end{equation} 
\end{lemma}

\begin{proof}
If $j \neq k$, we make the ansatz $W_{jk}(x) = f(|x|) Y_{jk}(x/|x|)$, with $Y_{jk}(\omega) = \omega_j \omega_k$ for $\omega \in \mathbb S^2$. Observing that $Y_{jk}$ is a spherical harmonic of degree $2$, $W_{jk}$ solves the equation in \eqref{w equation lemma} if and only if $f$ solves the ODE 
\begin{equation}
\label{ode proof}
- f''(r) - \frac{2}{r} f'(r) + \frac{6}{r^2} f(r)  + 5 f(r) B^4(r) = 0 \quad \text{ on } (0, \infty). 
\end{equation} 
Following the discussion in the proof of Proposition \ref{theorem non degeneracy intro},  there is a solution $f$ ($f = v_2$ in the notation of Proposition \ref{theorem non degeneracy intro}, up to the slightly different normalization of $B$) which satisfies $f(r) \sim r^2$ for $r \in (0, \infty)$. Up to replacing $f$ by a suitable scalar multiple, we may thus assume that $\lim_{r \to 0} f(r) r^{-2} = 1$. It follows that 
\[ W_{jk}(x) = f(|x|) Y_{jk}(x /|x|) = f(|x|) |x|^2 x_j x_k = (1 + o(1)) x_j x_k = x_j x_k + o(|x|^2). \]

If $j = k$, we set
\[ W_{jj}(x) = f(|x|)Y_j(x/|x|) + W(|x|), \]
where $Y_j(\omega) = \frac{1}{2} \omega_j^2 - \frac{1}{6}$, $f$ is a solution to \eqref{ode proof} with $\lim_{r \to 0} f(r) r^{-2} = 1$ and $W$ is the function from Lemma \ref{lemma integral w0}. Observing that $Y_j$ is a spherical harmonic of degree $2$, $W_{jj}$ satisfies the equation in \eqref{w equation}. Moreover, 
\[ W_{jj}(x) = f(|x|) \left(\frac{1}{2} \frac{x_j^2}{|x|^2} - \frac{1}{6} \right) + W(|x|) = \frac{1}{2} x_j^2 + \left( W(|x|) - \frac{1}{6} f(|x|) \right) + o(|x|^2) . \]
From the expression for $W$ found in Lemma \ref{lemma w0 variation of constants}, it is tedious but straightforward to check that $W(r) = \frac{1}{6} r^2$ as $r \to 0$. Hence $ W(|x|) - \frac{1}{6} f(|x|) = o(|x|^2)$, and the proof is complete. 
\end{proof}

\bibliography{brezis-peletier}
	\bibliographystyle{plain}

\end{document}